\newtheorem{theorem}{Theorem}[section]
\newtheorem{lemma}[theorem]{Lemma}
\newtheorem{proposition}[theorem]{Proposition}
\newtheorem{observation}[theorem]{Observation}
\newtheorem{problem}[theorem]{Problem}
\newtheorem{corollary}[theorem]{Corollary}
\newtheorem{note}[theorem]{Note}
\newtheorem{fact}[theorem]{Fact}
\theoremstyle{definition}
\newtheorem{definition}[theorem]{Definition}
\theoremstyle{remark}
\newtheorem{remark}[theorem]{Remark}
\newcommand{\1}{\mathbf{1}}
\newcommand{\are}{{\rm LF}\hskip0.02cm}
\newcommand{\card}{{\rm card}\hskip0.02cm}
\def\f2{\mathbb{F}_2}
\def\lip{\hskip0.02cm{\rm Lip}\hskip0.01cm}
\newcommand{\lf}{{\rm LF}\hskip0.02cm}
\newcommand{\ep}{\varepsilon}
\newcommand{\lin}{{\rm lin}\hskip0.02cm}
\newcommand{\diam}{{\rm diam}\hskip0.02cm}
\newcommand{\sign}{{\rm sign}\hskip0.02cm}
\begin{document}

\title{Lipschitz free spaces on finite metric spaces}

\author{Stephen J. Dilworth, Denka Kutzarova, and Mikhail I. Ostrovskii}

\date{\today}
\maketitle

\begin{large}

\begin{abstract}
Main results of the paper:

(1) For any finite metric space $M$ the Lipschitz free space on
$M$ contains a large well-complemented subspace which is close to
$\ell_1^n$.

(2) Lipschitz free spaces on large classes of recursively defined
sequences of graphs are not uniformly isomorphic to $\ell_1^n$ of
the corresponding dimensions. These classes contain well-known
families of diamond graphs and Laakso graphs.

Interesting features of our approach are: (a) We consider averages
over groups of cycle-preserving bijections of graphs which are not
necessarily graph automorphisms; (b) In the case of such recursive
families of graphs as Laakso graphs we use the well-known approach
of Gr\"unbaum (1960) and Rudin (1962) for estimating projection
constants in the case where invariant projections are not unique.
\end{abstract}

\hskip0.4cm{\small \noindent{\bf 2010 Mathematics Subject
Classification.} Primary: 52A21; Secondary: 30L05, 42C10, 46B07,
46B20, 46B85.}

\hskip0.4cm{\small \noindent{\bf Keywords:} Arens-Eells space,
diamond graphs, earth mover distance, Kantorovich-Rubinstein
distance, Laakso graphs, Lipschitz free space, recursive family of
graphs, transportation cost, Wasserstein distance}

\tableofcontents

\section{Introduction}\label{S:Goals}

\subsection{Definitions and basic properties of Lipschitz free
spaces}

Basic facts about Lipschitz free spaces can be found in
\cite[Chapter 10]{Ost13} and \cite[Chapter 2]{Wea99} (in
\cite{Wea99} Lipschitz free spaces are called Arens-Eells spaces)

\begin{definition}\label{D:molLF} Let $X$ be a metric space. A
{\it molecule} of $X$\index{molecule of a metric space} is a
function $m:X\to\mathbb{R}$ which is supported on a finite set and
which satisfies $\sum_{p\in X} m(p)=0$. For $p,q\in X$ define the
molecule $m_{pq}$ by $m_{pq}=\1_p-\1_q$, where $\1_p$ and $\1_q$
are indicator functions of singleton sets $\{p\}$ and $\{q\}$. We
endow the space of molecules with the seminorm
\[||m||_{\are}=\inf\left\{\sum_{i=1}^n|a_i|d_X(p_i,q_i):~m=\sum_{i=1}^na_im_{p_iq_i}\right\}\]
It is not difficult to see that this is actually a norm. The {\it
Lipschitz free space} over $X$\index{Lipschitz free space over a
metric space} is defined as the completion of the space of all
molecules with respect to the norm $||\cdot||_{\are}$. We denote
the Lipschitz free space over $X$ by $\are(X)$.
\end{definition}

By a {\it pointed metric space} we mean a metric space with a
distinguished point, denoted $O$. By $\lip_0(X)$ we denote the
space of all Lipschitz functions $f:X\to\mathbb{R}$ satisfying
$f(O)=0$, where $O$ is the distinguished point of a pointed metric
space $X$. It is not difficult to check that $\lip_0(X)$ is a
Banach space with respect to the norm $||f||=\lip(f)$. As is
well-known \cite{Ost13,Wea99}, the following duality holds:
\begin{equation}\label{E:LFdual}\are(X)^*=\lip_0(X).\end{equation}

We also need the following description of $\lf(X)$ in the case
where $X$ is a vertex set of an unweighted graph with its graph
distance. Let $G=(V(G),E(G))=(V,E)$ be a finite graph. Let
$\ell_1(E)$ be the space of real-valued functions on $E$ with the
norm $||f||=\sum_{e\in E}|f(e)|$. We consider some orientation on
$E$, so each edge of $E$ is a directed edge. For a directed cycle
$C$ in $E$ (we mean that the cycle can be ``walked around''
following the direction, which is not related with the orientation
of $E$) we introduce the {\it signed indicator function} of $C$ by
\begin{equation}\label{E:SignInd}
\chi_C(e)=\begin{cases} 1 & \hbox{ if }e\in C\hbox{ and its
orientations in $C$ and $G$ are the same}\\
-1 & \hbox{ if }e\in C\hbox{ but its orientations in $C$ and $G$
are different}
\\
0 & \hbox{ if }e\notin C.
\end{cases}\end{equation}

The {\it cycle space} $Z(G)$ of $G$ is the subspace of $\ell_1(E)$
spanned by the signed indicator functions of all cycles in $G$. We
will use the fact that $\lf(G)$ for unweighed graphs $G$
(\cite[Proposition 10.10]{Ost13}) is isometrically isomorphic to
the quotient of $\ell_1(E)$ over $Z(G)$:
\begin{equation}\label{E:LFunweigh}\lf(G)=\ell_1(E)/Z(G)\end{equation}

We use the standard terminology of Banach space theory
\cite{BL00}, graph theory \cite{BM08,Die17}, and the theory of
metric embeddings \cite{Ost13}.

\subsection{Historical and terminological remarks}

The Lipschitz-free spaces are studied by several groups of
researchers, for different reasons and under different names. Some
authors use the term {\it Arens-Eells space} (see
\cite{Kal08,Wea99}), which reflects the contribution of Arens and
Eells \cite{AE56}. The norm of this space and a more general space
of measures (see \cite{Vil03,Vil09,Wea99}) is called the {\it
Kantorovich-Rubinstein distance (or norm)}  to acknowledge the
contribution of Kantorovich and Rubinstein \cite{Kan42,KR58}, or
{\it Wasserstein distance (or norm)}, (see \cite{ANN17+, NR17}) to
acknowledge the contribution of Wasserstein \cite{Vas69} (whose
name is transliterated from Russian as Vasershtein), see the paper
\cite{Dob70}, where the term Wasserstein distance was introduced.
The term {\it Wasserstein norm} is also used (and more
justifiably) for the $p$-analogue of the distance. The term {\it
Lipschitz free space} is commonly used (especially in the Banach
space theory) after the publication of the paper \cite{GK03}. The
names used for this distance in Computer Science are {\it earth
mover distance} and {\it transportation cost} (see \cite{ADIW09},
\cite{AIK08}, \cite{KN06}, \cite{NS07}). All of the mentioned
above notions are equivalent for finite metric spaces which we
consider in this paper. For this reason we decided not to attach
any of the mentioned names to the objects of our study and to use
the neutral name {\it Lipschitz free space} (which only reflects
the connection of this notion with the notion of a Lipschitz
function).

Lipschitz free spaces are of significant interest for Computer
Science (see \cite{IM04}), Functional Analysis (\cite{God15}, \cite{Kal08},
\cite{Wea99}), Metric Geometry (\cite{ANN17+},
\cite[p.~134]{NR17}, \cite{Ost13}), Optimal Transportation
(\cite{Vil03},\cite{Vil09}).

\subsection{Overview of the paper}

Our interest in Lipschitz free spaces is inspired by the theory of
Metric Embeddings (see \cite{Ost13}): we are interested in
studying properties of Banach spaces admitting an isometric
embedding of a given metric space. We are going to focus on finite
metric spaces.
\medskip

Our main results and observations:

\begin{enumerate}

\item[{\bf 1.}] We show that for any finite metric space $M$ the
space $\lf(M)$ contains a half-dimensional well-complemented
subspace which is close to $\ell_1^n$, see Section
\ref{S:LargeL1}.

\item[{\bf 2.}] We prove that the Lipschitz free spaces on large
classes of recursively defined sequences of graphs (see Section
\ref{S:DefRecDiLa} for definitions) are not uniformly isomorphic
to $\ell_1^n$ of the corresponding dimensions (Section
\ref{S:Recur}). These classes contain well-known families of
diamond graphs and Laakso graphs, see \ref{S:DefRecDiLa} for
definitions and Section \ref{S:Diamond} for proofs. The case of
diamond graphs can also be handled using classical theory of
orthogonal series. Since this approach has its advantages and
leads to more precise results, we enclose the corresponding
argument in Section \ref{S:DiamHaar}.

Interesting features of our approach are: (1) We consider averages
over groups of cycle-preserving bijections of graphs which are not
necessarily automorphisms (see Section \ref{S:CycBij}); (2) In the
case of such recursive families of graphs as Laakso graphs we use
the well-known approach of Gr\"unbaum \cite{Gru60} and Rudin
\cite{Rud62} for estimating projection constants in the case where
invariant projections are not unique (see Sections
\ref{S:GRAAppr}, \ref{S:Annihil}, \ref{S:Combin}, and
\ref{S:NonUnLaakso}).

\item[{\bf 3.}] We observe (Section \ref{S:L1}) that the known
fact (see \cite{Dal15}, \cite{CD16}) that Lipschitz free spaces on
finite ultrametrics are close to $\ell_1$ in the Banach-Mazur
distance immediately follows from the result of Gupta \cite{Gup01}
on Steiner points and the well-known result on isometric
embeddability of ultrametrics into weighted trees.

\item[{\bf 4.}] We finish this section by observing that the
result of Erd\H{o}s and P\'osa \cite{EP62} on edge-disjoint cycles
implies that the cycle space (considered as a subspace of
$\ell_1(E)$) always contains a `large' $1$-complemented in
$\ell_1(E)$ subspace isometric to $\ell_1^n$.

Observe that the subspace in $Z(G)$ spanned by the signed
indicator functions of a family of edge-disjoint cycles is
isometric to $\ell_1^n$ of the corresponding dimension and is
$1$-complemented in $\ell_1(E(G))$, and so in $Z(G)$.

This makes us interested in the estimates of the amount of
edge-disjoint cycles in terms of the dimension of the cycle space.
Such estimates, sharp up to the constants involved in them, were
obtained by Erd\H{o}s and P\'osa \cite{EP62}. Denote by $\mu(G)$
the dimension of the cycle space of $G$. It is well-known, see
\cite[Proposition 2.1]{Big97}, that for connected graphs
$\mu(G)=|E(G)|-|V(G)|+1$. Let $\nu(G)$ be the maximal number of
edge-disjoint cycles in $G$.

\begin{theorem}[{Erd\H{o}s and P\'osa \cite[Theorem
4]{EP62}}]\label{T:EP}
\[\nu(G)=\Omega\left(\frac{\mu(G)}{\log(\mu(G))}\right)\]
and for some family of graphs
\[\nu(G)=O\left(\frac{\mu(G)}{\log(\mu(G))}\right)\]
\end{theorem}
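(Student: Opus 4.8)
The plan is to prove the two bounds separately, in both cases reducing to multigraphs of minimum degree at least $3$, where short cycles are forced to exist. Throughout I use $\mu(G)=|E(G)|-|V(G)|+1$ for connected $G$ (and $\mu=|E|-|V|+c$ in general, with $c$ the number of components), together with the elementary fact that $\mu=0$ precisely when $G$ is a forest. The first step is a \emph{suppression} reduction: repeatedly delete vertices of degree $\le 1$ (they lie on no cycle) and smooth vertices of degree $2$ (replace the two incident edges by a single edge). Neither operation changes $c$, and each lowers $|V|$ and $|E|$ by the same amount, so $\mu$ is preserved; moreover the edges of the resulting multigraph $H$ encode edge-disjoint paths of $G$, so edge-disjoint cycles of $H$ pull back to edge-disjoint cycles of $G$ and $\nu(G)\ge\nu(H)$. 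Hence it suffices to argue about a multigraph $H$ of minimum degree $\ge 3$ with $\mu(H)=\mu(G)$.

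For the lower bound the key sublemma is that such an $H$ contains a cycle of length $O(\log|V(H)|)$: running a breadth-first search from any vertex, a girth of $g$ would force the balls of radius up to $\lfloor(g-1)/2\rfloor$ to be trees branching by a factor at least $2$, whence $|V(H)|\ge 2^{\lfloor(g-1)/2\rfloor}$ and so $g=O(\log|V(H)|)$. Since minimum degree $\ge 3$ gives $|E(H)|\ge\tfrac32|V(H)|$, we have $|V(H)|\le 2\mu(H)\le 2\mu(G)$, so every such $H$ has a cycle of length at most $c\log\mu(G)$ for an absolute constant $c$. Now iterate: find a shortest cycle, record it, delete its edges, re-suppress to restore minimum degree $\ge 3$, and repeat until the graph is a forest. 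Deleting the $\ell$ edges of one cycle changes $\mu$ by $-\ell+\Delta c\ge-\ell$, and re-suppression leaves $\mu$ fixed, so each round lowers $\mu$ by at most $c\log\mu(G)$. Passing from $\mu(G)$ down to $0$ therefore takes at least $\mu(G)/(c\log\mu(G))$ rounds, each contributing one cycle, and these cycles are edge-disjoint by construction, giving $\nu(G)\ge\nu(H)\ge\mu(G)/(c\log\mu(G))=\Omega(\mu(G)/\log\mu(G))$.

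For the upper bound I exhibit a family meeting the estimate, namely bounded-degree graphs (say $3$-regular) on $n$ vertices with girth $g\ge c'\log n$. For these, $|E|=\tfrac32 n=\Theta(n)$ and $\mu=|E|-|V|+1=\tfrac12 n+1=\Theta(n)=\Theta(|E|)$, so $\log\mu=\Theta(\log n)$. Since every cycle uses at least $g$ edges, any collection of edge-disjoint cycles has size at most $|E|/g$, whence
\[\nu(G)\le\frac{|E|}{g}\le\frac{\Theta(n)}{c'\log n}=O\!\left(\frac{\mu(G)}{\log\mu(G)}\right),\]
which is the second assertion (and, combined with the first part, forces equality up to constants for this family).

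The routine points above — the suppression bookkeeping, the Euler-characteristic arithmetic, and the breadth-first girth count — are standard. The one ingredient needing genuine input, and the step I expect to be the main obstacle, is the existence of bounded-degree graphs of girth $\Omega(\log n)$ used in the upper bound. This is a classical fact: a random $3$-regular graph has only $o(n)$ cycles shorter than $c'\log n$ in expectation (for $c'$ small), so deleting one edge from each destroys them while removing only $o(n)$ edges, leaving $|E|=\tfrac32 n-o(n)$ and girth exceeding $c'\log n$; alternatively one cites the explicit Erd\H{o}s--Sachs construction. The care lies in checking that after this surgery one still has $\mu=\Theta(|E|)$, which holds since $|E|-|V|+1=\tfrac12 n-o(n)=\Theta(n)$.
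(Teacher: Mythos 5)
Your proof is correct, but it takes a genuinely different route from the paper --- necessarily so, because the paper does not actually prove Theorem \ref{T:EP}: it quotes the result from Erd\H{o}s and P\'osa \cite{EP62}, and the remark following the theorem only explains how to translate their formulation (that $g(k)$, the least integer such that every graph with $n$ vertices and $n+g(k)$ edges contains $k$ edge-disjoint cycles, satisfies $g(k)=\Theta(k\log k)$) into the form stated, via $|E(G)|=|V(G)|+\mu(G)-1$ for connected $G$. What you have produced is essentially a reconstruction of the original Erd\H{o}s--P\'osa argument: the suppression reduction to a multigraph of minimum degree $3$, the Moore-type estimate that such a multigraph (which has $|V|\le 2\mu$ vertices) contains a cycle of length $O(\log \mu)$, and the greedy extract--delete--re-suppress iteration are exactly the mechanism behind $g(k)=O(k\log k)$, and your bounded-degree high-girth family is the standard witness of tightness. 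The details check out: pullbacks of cycles through iterated suppressions are indeed edge-disjoint in $G$, each round decreases $\mu$ by at most the length of the deleted cycle (and by at least $1$, so the process terminates), and you correctly isolate the one nontrivial external ingredient, the existence of cubic graphs of girth $\Omega(\log n)$ (Erd\H{o}s--Sachs, or the random sketch you give). The trade-off between the two routes: the paper's citation is the right economy for its purpose, since the theorem is used there only as a black box to exhibit a large $1$-complemented $\ell_1^k$ inside $Z(G)$, whereas your argument makes the statement self-contained at the price of importing a high-girth construction whose proof is of comparable depth to the theorem itself. Two minor slips worth fixing if you keep your text: deleting an isolated vertex does change the number of components (though $\mu=|E|-|V|+c$ is still preserved, which is all you need); and after your edge-deletion surgery the graph may be disconnected, which is harmless because neither the statement nor Erd\H{o}s--P\'osa requires connectivity and $\mu=\Theta(n)$ persists.
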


\begin{remark} It is worth mentioning that Erd\H{o}s and P\'osa
state their result slightly differently. They do not require
graphs to be simple or connected and denote by $g(k)$ the smallest
integer such that for any $n\in \mathbb{N}$ a graph with $n$
vertices and $n+g(k)$ edges contains at least $k$ edge-disjoint
cycles. Theorem 4 in \cite{EP62} states that
\[g(k)=\Theta(k\log k).\]
It is easy to see that Theorem \ref{T:EP} follows from this
result.
\end{remark}

\end{enumerate}

\subsection{Recursive families of graphs, diamond graphs and Laakso
graphs}\label{S:DefRecDiLa}

We are going to use the general definition of recursive sequences
of graphs introduced by Lee and Raghavendra \cite{LR10}.

\begin{definition}\label{D:Comp}
Let $H$ and $G$ be two finite connected directed graphs having
distinguished vertices which we call {\it top} and {\it bottom},
respectively. The {\it composition} $H\oslash G$ is obtained by
replacing each  edge $\overrightarrow{uv}\in E(H)$ by a copy of
$G$, the vertex $u$ is identified with the bottom of $G$ and the
vertex $v$ is identified with the top of $G$. Directions of edges
in $H\oslash G$ are inherited from $G$. The {\it top} and {\it
bottom} of the obtained graph are defined as the top and bottom of
$H$, respectively.
\end{definition}

When we consider these graphs as metric spaces we use the graph
distances of the underlying undirected graphs (that is, we ignore
the directions of edges).\medskip

The following property of this composition is straightforward to
verify:

\begin{lemma}[Associativity of $\oslash$]\label{L:Assoc}  For any three graphs
$F,G,H$ the sides of
\begin{equation}\label{E:Assoc}
(F\oslash G)\oslash H=F\oslash (G\oslash H),
\end{equation}
are equal both as directed graphs and as metric spaces.
\end{lemma}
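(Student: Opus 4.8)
The plan is to prove that the two sides are isomorphic as directed graphs via an isomorphism fixing the top and bottom; the metric-space equality then follows automatically, since the graph distance depends only on the underlying undirected graph and any directed-graph isomorphism is in particular an isometry for the graph metric. So the whole content is combinatorial.

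First I would fix an explicit combinatorial model for a single composition. For connected directed graphs $A,B$ with distinguished top and bottom, write $\mathrm{int}(B)=V(B)\setminus\{\mathrm{top}_B,\mathrm{bottom}_B\}$ for the internal vertices of $B$. Inserting a fresh copy of $B$ along an edge $e=\overrightarrow{uv}\in E(A)$ contributes the internal vertices of that copy while gluing its bottom to $u$ and its top to $v$. This yields the model
\[
V(A\oslash B)=V(A)\ \sqcup\ \big(E(A)\times\mathrm{int}(B)\big),
\qquad
E(A\oslash B)\cong E(A)\times E(B),
\]
where the edge indexed by $(e,f)$ with $f=\overrightarrow{xy}\in E(B)$ runs between the images of $x$ and $y$ under the gluing map $\mathrm{bottom}_B\mapsto u$, $\mathrm{top}_B\mapsto v$, $w\mapsto(e,w)$ for $w\in\mathrm{int}(B)$, with direction inherited from $f$; the top and bottom of $A\oslash B$ are $\mathrm{top}_A$ and $\mathrm{bottom}_A$.

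Next I would expand both sides using this model together with the fact that $\mathrm{top}_{G\oslash H}=\mathrm{top}_G$ and $\mathrm{bottom}_{G\oslash H}=\mathrm{bottom}_G$, which gives $\mathrm{int}(G\oslash H)=\mathrm{int}(G)\sqcup\big(E(G)\times\mathrm{int}(H)\big)$. Carrying out the two expansions shows that both $(F\oslash G)\oslash H$ and $F\oslash(G\oslash H)$ have the common vertex set
\[
V(F)\ \sqcup\ \big(E(F)\times\mathrm{int}(G)\big)\ \sqcup\ \big(E(F)\times E(G)\times\mathrm{int}(H)\big),
\]
and the common edge index set $E(F)\times E(G)\times E(H)$, by associativity of the Cartesian product and of disjoint union. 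I would take the identity map under these identifications as the candidate isomorphism $\phi$; it manifestly sends $\mathrm{top}_F\mapsto\mathrm{top}_F$ and $\mathrm{bottom}_F\mapsto\mathrm{bottom}_F$.

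It remains to check that $\phi$ respects edge incidences and directions, which is the one genuinely technical point. For an edge indexed by $(e_F,e_G,e_H)$ one must verify that its two endpoints, as computed by the two nested gluing maps, agree. For interior indices—those $e_H$ incident neither to $\mathrm{top}_H$ nor $\mathrm{bottom}_H$, and recursively those not hitting the distinguished vertices of $G$ or $F$—both endpoints land in the $E(F)\times E(G)\times\mathrm{int}(H)$ block and clearly coincide. The main obstacle is the boundary bookkeeping: when $e_H$ meets $\mathrm{top}_H$ or $\mathrm{bottom}_H$, that endpoint is glued to a vertex coming from the copy of $G$, which may itself sit on the boundary of $G$ and be further glued to a vertex of $F$. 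I would organize this as a short case analysis on which of $\{\mathrm{top},\mathrm{bottom}\}$ each relevant vertex equals, checking that the left gluing (inserting $H$ into $F\oslash G$) and the right gluing (inserting $G\oslash H$ into $F$) produce the same element of the common vertex set. The two prescriptions agree precisely because the top and bottom of $G\oslash H$ are inherited from $G$, which is exactly what makes the nested identifications compatible. Once incidences and directions are matched, $\phi$ is a directed-graph isomorphism fixing top and bottom, and the equality as metric spaces follows.
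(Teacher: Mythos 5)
Your proof is correct: the explicit combinatorial model for a single composition, the expansion of both sides to the common vertex set and edge index set, and the boundary case analysis (which works precisely because the top and bottom of $G\oslash H$ are inherited from $G$) constitute a complete verification. The paper itself gives no argument---it states the lemma is ``straightforward to verify''---so your proposal is exactly that straightforward verification, written out in full.
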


Let $B$ be a connected unweighted finite simple directed graph
having two distinguished vertices, which we call {\it top} and
{\it bottom}, respectively. We use $B$ to construct recursive
family of graphs as follows:

\begin{definition}\label{D:B_n} We say that the graphs $\{B_n\}_{n=0}^\infty$ are defined by {\it recursive
composition} or that $\{B_n\}_{n=0}^\infty$ is a {\it recursive
sequence} or {\it recursive family} of graphs if:

\begin{itemize}

\item The graph $B_0$ consists of one directed edge with {\it
bottom} being the initial vertex and {\it top} being the terminal
vertex.

\item $B_n=B_{n-1}\oslash B$.

\end{itemize}
\end{definition}

Observe that Lemma \ref{L:Assoc} implies that for every
$k\in\{0,1,\dots,n\}$ we have
\begin{equation}\label{E:DiffProd} B_n=B_{n-k}\oslash B_{k},
\end{equation} also $B_1=B$. The authors of \cite{LR10} use the notation
$B_n=B^{\oslash n}$.\medskip

Observe that in the case where the graph $B$ has an automorphism
which maps its bottom to top and top to bottom, the choice of
directions on edges will not affect the isomorphic structure of
the underlying undirected  graphs. For this reason to define
recursive families in such cases we do not not need to assign
directions to edges.
\medskip

Interesting and important examples of recursive families of graphs
have been extensively studied in the literature. One of the most
well-known and important for the theory of metric embeddings
families was introduced in \cite{GNRS04} (conference version was
published in 1999). This family (which turned to be very useful in
the theory of metric characterizations of classes of Banach spaces
\cite{JS09}, see also \cite[Section 9.3.2]{Ost13}) corresponds to
the special case of Definition \ref{D:B_n}, where $B$ is a square
and one pair of its opposite vertices is chosen to play roles of
the top and the bottom. The usual definition of diamond graphs is
the following.

\begin{definition}[Diamond graphs]\label{D:Diamonds}
Diamond graphs $\{D_n\}_{n=0}^\infty$ are defined recursi\-ve\-ly:
The {\it diamond graph} of level $0$ has two vertices joined by an
edge of length $1$ and is denoted by $D_0$. The {\it diamond
graph} $D_n$ is obtained from $D_{n-1}$ in the following way.
Given an edge $uv\in E(D_{n-1})$, it is replaced by a
quadrilateral $u, a, v, b$, with edges $ua$, $av$, $vb$, $bu$.
(See Figure \ref{F:Diamond2}.)
\end{definition}

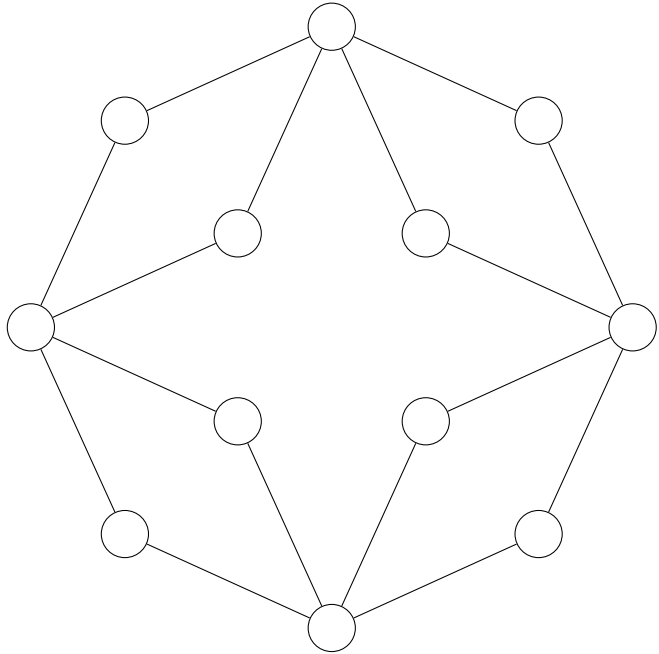
\begin{figure}
\begin{center}
{
\begin{tikzpicture}
  [scale=.25,auto=left,every node/.style={circle,draw}]
  \node (n1) at (16,0) {\hbox{~~~}};
  \node (n2) at (5,5)  {\hbox{~~~}};
  \node (n3) at (11,11)  {\hbox{~~~}};
  \node (n4) at (0,16) {\hbox{~~~}};
  \node (n5) at (5,27)  {\hbox{~~~}};
  \node (n6) at (11,21)  {\hbox{~~~}};
  \node (n7) at (16,32) {\hbox{~~~}};
  \node (n8) at (21,21)  {\hbox{~~~}};
  \node (n9) at (27,27)  {\hbox{~~~}};
  \node (n10) at (32,16) {\hbox{~~~}};
  \node (n11) at (21,11)  {\hbox{~~~}};
  \node (n12) at (27,5)  {\hbox{~~~}};

  \foreach \from/\to in {n1/n2,n1/n3,n2/n4,n3/n4,n4/n5,n4/n6,n6/n7,n5/n7,n7/n8,n7/n9,n8/n10,n9/n10,n10/n11,n10/n12,n11/n1,n12/n1}
    \draw (\from) -- (\to);

\end{tikzpicture}
} \caption{Diamond $D_2$.}\label{F:Diamond2}
\end{center}
\end{figure}

Let us count some parameters associated with graphs $D_n$. Denote
by $V(D_n)$ and $E(D_n)$ the vertex set and edge set of $D_n$,
respectively. We need the following simple observations about
cardinalities of these sets:

\begin{enumerate}

\item[{\bf (a)}] $|E(D_n)|=4^n$.

\item[{\bf (b)}] $|V(D_{n+1})|=|V(D_n)|+2|E(D_n)|$.\label{P:(b)}

\end{enumerate}

Hence $|V(D_n)|=2(1+\sum_{i=0}^{n-1}4^i)$.
\medskip

The next special case of the general Definition \ref{D:B_n}, whose
metric geometry was studied in \cite{LR10,OR17}, corresponds to
the case where $B=K_{2,n}$, and the vertices in the part
containing $2$ vertices play the roles of the top and the bottom.
The usual definition is the following.

\begin{definition}[Multibranching diamonds]\label{D:BranchDiam}
For  any integer  $k\ge 2$, we define $D_{0,k}$ to be the graph
consisting of two vertices joined by one edge. For any
$n\in\mathbb{N}$, if the graph $D_{n-1,k}$  is already defined,
the graph $D_{n,k}$ is defined as the graph obtained from
$D_{n-1,k}$ by replacing each edge $uv$ in $D_{n-1,k}$ by a set of
$k$ independent paths of length $2$ joining $u$ and $v$. We endow
$D_{n,k}$ with the shortest path distance. We call
$\{D_{n,k}\}_{n=0}^\infty$ {\it diamond graphs of branching $k$},
or {\it diamonds of branching} $k$.
\end{definition}

The last special case of the general Definition \ref{D:B_n}, which
we consider in this paper goes back to the paper of Laakso
\cite{Laa00}. The corresponding recursive family of graphs was
introduced by Lang and Plaut \cite{LP01}. In \cite{OO17} it was
shown that these graphs are incomparable with diamond graphs in
the following sense: elements of none of these families admit
bilipschitz embeddings into the other family with uniformly
bounded distortions. Laakso graphs correspond to the case where
the graph $B$ is the graph shown in Figure \ref{F:Laakso} with the
natural choice for the top and the bottom.

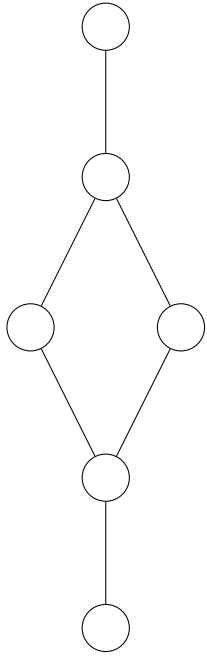
\begin{figure}
\begin{center}
{
\begin{tikzpicture}
  [scale=.25,auto=left,every node/.style={circle,draw}]
  \node (n1) at (16,0) {\hbox{~~~}};
  \node (n2) at (16,8)  {\hbox{~~~}};
  \node (n3) at (12,16) {\hbox{~~~}};
  \node (n4) at (20,16)  {\hbox{~~~}};
  \node (n5) at (16,24)  {\hbox{~~~}};
  \node (n6) at (16,32) {\hbox{~~~}};

\foreach \from/\to in {n1/n2,n2/n3,n2/n4,n4/n5,n3/n5,n5/n6}
    \draw (\from) -- (\to);

\end{tikzpicture}
} \caption{Laakso graph $\mathcal{L}_1$.}\label{F:Laakso}
\end{center}
\end{figure}

\begin{definition}\label{D:Laakso}
Laakso graphs $\{\mathcal{L}_n\}_{n=0}^\infty$ are defined
recursively: The {\it Laakso graph} of level $0$ has two vertices
joined by an edge of length $1$ and is denoted $\mathcal{L}_0$.
The {\it Laakso graph} $\mathcal{L}_n$ is obtained from
$\mathcal{L}_{n-1}$ according to the following procedure. Each
edge $uv\in E(\mathcal{L}_{n-1})$ is replaced by the graph
$\mathcal{L}_1$ exhibited in Figure \ref{F:Laakso}, the vertices
$u$ and $v$ are identified with the vertices of degree $1$ of
$\mathcal{L}_1$.
\end{definition}

\section{Lipschitz-free spaces close to $\ell_1^n$}\label{S:L1}

Our first proposition is known, see \cite[Corollary 3.3]{God10},
we give a direct proof of it for convenience of the reader.

\begin{proposition}\label{P:Tree} Let $T$ be a finite weighted tree. Then  $\lf(T)$ is
isometric to $\ell_1^k$, where $k$ is the number of edges in the
tree.
\end{proposition}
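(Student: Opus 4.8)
The plan is to exhibit an explicit isometry by decomposing every molecule along the edges of $T$. Fix the distinguished point $O$ as a root, orient each edge $e=\overrightarrow{ab}$ arbitrarily, and write $w_e=d_T(a,b)$ for its weight and $m_e=\1_a-\1_b$ for the associated edge-molecule. First I would record the linear-algebra skeleton: the space of molecules on the finite vertex set of $T$ is $\{m:\sum_v m(v)=0\}$, which has dimension $|V(T)|-1=k$ since $T$ is a tree, and the $k$ edge-molecules $\{m_e\}_{e\in E(T)}$ are linearly independent. Independence follows by peeling off a leaf vertex, whose value forces the coefficient of its unique incident edge to vanish, and finishing by induction on the remaining tree. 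Hence $\{m_e\}$ is a basis, and it suffices to compute $\|\sum_e c_e m_e\|_{\lf}$ for arbitrary scalars $c_e$ and show it equals $\sum_e|c_e|w_e$; the map $\sum_e c_e m_e\mapsto (c_e w_e)_e$ is then the claimed isometry onto $\ell_1^k$.

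The upper bound is immediate from the definition of the norm: $\|m_e\|_{\lf}\le d_T(a,b)=w_e$, because $m_e=1\cdot m_{ab}$ is itself an admissible representation, so the triangle inequality yields $\|\sum_e c_e m_e\|_{\lf}\le\sum_e|c_e|w_e$.

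The substance is the matching lower bound, which I would obtain from the duality $\lf(T)^*=\lip_0(T)$ of \eqref{E:LFdual} by producing a single function $f$ with $\lip(f)\le1$ and $\langle f,\sum_e c_e m_e\rangle=\sum_e|c_e|w_e$. I would prescribe the signed difference $f(a)-f(b)=\sign(c_e)\,w_e$ across each oriented edge $e=\overrightarrow{ab}$ and set $f(O)=0$; because $T$ has no cycles, propagating these differences along the unique path from $O$ to each vertex determines $f$ consistently. For any two vertices $u,v$ the value $|f(u)-f(v)|$ is then at most the sum of $w_e$ over the edges of the $u$–$v$ path, which is exactly $d_T(u,v)$, so $\lip(f)\le1$. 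Pairing $f$ against the molecule gives $\sum_e c_e\bigl(f(a)-f(b)\bigr)=\sum_e c_e\sign(c_e)w_e=\sum_e|c_e|w_e$, completing the lower bound and hence the isometry.

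The one genuinely load-bearing step is the consistency of the prescribed edge-differences, and it is precisely where the hypothesis that $T$ is a tree enters: acyclicity guarantees that $f$ exists. On a graph with cycles the signs $\sign(c_e)$ could be incompatible around a cycle, which is exactly the obstruction encoded by the cycle space $Z(G)$ in \eqref{E:LFunweigh}. Thus the argument does not merely prove the statement but also isolates why trees are the isometric-$\ell_1^k$ case.
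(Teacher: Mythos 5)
Your proof is correct and follows essentially the same route as the paper: identify the edge-molecules as a basis, bound the norm above by the triangle inequality, and obtain the matching lower bound via the duality \eqref{E:LFdual} by building a $1$-Lipschitz function whose signed increments $\pm w_e$ are propagated along the tree from the base point. The only difference is that you spell out the linear independence of the edge-molecules and the acyclicity-based consistency of the dual function, which the paper leaves as ``clear.''
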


\begin{proof} Let $f\mapsto e_f$ be a bijection between the edge set of $T$ and the
unit vector basis in $\ell_1^k$. We denote the weight of $f$ by
$w(f)$. We consider the following map $F$ of the set of molecules
on $T$ into $\ell_1^k$.

For each edge $f=\{u,v\}$ we let $F(\1_u-\1_v)=w(f)e_f$. It is
clear that each molecule in $\lf(T)$ can be (uniquely) written as
a linear combination of molecules $\{\1_u-\1_v\}_{\{u,v\}\in
E(T)}$. We define $F$ to be the linear extension of the defined
map to $\lf(T)$, it is clear from this definition that $F$ is a
surjective map onto $\ell_1^k$.

By the duality \eqref{E:LFdual}, to show that $F$ is an isometry
of $\lf(T)$ onto $\ell_1^k$ it is enough to find a $1$-Lipschitz
function $L\in\lip_0(T)$ (the base point $O$ is chosen
arbitrarily) such that
\[L\left(\sum_{\{u,v\}\in E(T)}a_{uv}(\1_u-\1_v)\right)=\sum_{\{u,v\}\in E(T)}|a_{uv}| \cdot
w(uv),\] where $a_{uv}\in\mathbb{R}$.

Construction of such $1$-Lipschitz function $L$ is quite
straightforward. We let $L(O)=0$. If the function is already
defined on one end $u$ of an edge $\{u,v\}$, we set $L(v)=L(u)\pm
w(uv)$, where we choose $+$ if the coefficient of $\1_v-\1_u$ in
$m$ is nonnegative, and $-$ if the coefficient of $\1_v-\1_u$ in
$m$ is negative. It is clear that $L$ is $1$-Lipschitz and
$L(m)=\sum_{\{u,v\}\in E(T)}|a_{uv}| \cdot w(uv)$.
\end{proof}

The following result is very useful in the current context.

\begin{theorem}[\cite{Gup01}]\label{T:Gupta} Let $T$ be a weighted tree and $M$ be a subset of $V(T)$. Then there is a weighted
tree $\tilde T$ with the vertex set $M$ such that the distances
induced by $T$ and $\tilde T$ on $M$ are $8$-equivalent.
\end{theorem}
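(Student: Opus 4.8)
The plan is to realize $\tilde T$ as a contraction of $T$ onto its terminal set $M$ and to extract the factor $8$ from a charging argument. First I would preprocess $T$ by the two distance-preserving moves: delete every non-terminal leaf, and suppress every non-terminal vertex of degree $2$ (replacing its two incident edges by a single edge whose weight is their sum). Neither move changes any distance inside $M$, so after finitely many steps I may assume that every leaf of $T$ lies in $M$ and that every Steiner (non-terminal) vertex has degree at least $3$. Next I root $T$ at a point of $M$ and choose a map $\phi\colon V(T)\to M$ which fixes $M$ pointwise and whose fibers $\phi^{-1}(t)$ are connected subtrees (the \emph{territory} of $t$). Contracting each territory to its terminal turns $T$ into a tree $\tilde T$ with vertex set exactly $M$; I weight the edge of $\tilde T$ joining two adjacent territories $t,t'$ by the genuine distance $d_T(t,t')$.

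The lower estimate is then immediate and holds for \emph{every} such $\phi$: the $\tilde T$-geodesic between $x,y\in M$ passes through a sequence of terminals $x=r_0,r_1,\dots,r_m=y$, so
\[
d_{\tilde T}(x,y)=\sum_{k=1}^m d_T(r_{k-1},r_k)\ \ge\ d_T(x,y)
\]
by the triangle inequality. The real content is the upper estimate. Since the fibers are connected subtrees and the simple $T$-path from $x$ to $y$ is convex, that path meets each territory in one contiguous arc; hence $r_0,\dots,r_m$ are exactly the territories crossed along it, and writing $\rho(w)=d_T(w,\phi(w))$ for the radius of a territory at the vertex $w$ we get, for each crossing edge $(u,v)$, the bound $d_T(\phi(u),\phi(v))\le \rho(u)+d_T(u,v)+\rho(v)$, so that
\[
d_{\tilde T}(x,y)\ \le\ d_T(x,y)+\sum_{\text{crossings}}\big(\rho(u)+\rho(v)\big),
\]
because the crossing-edge weights already sum to at most $d_T(x,y)$. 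Everything thus reduces to choosing $\phi$ so that, along any geodesic, the territory radii at the crossing points sum to at most $7\,d_T(x,y)$, which yields distortion $1+7=8$.

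Choosing $\phi$ correctly is the main obstacle, and the obvious choice is a trap. If one sets $\phi(v)$ to be the nearest terminal in the subtree below $v$, the territories are connected, but the distortion is unbounded: a short ``caterpillar'' spine of degree-$3$ Steiner vertices, each carrying a pendant terminal, is unfolded into a long \emph{path} in $\tilde T$, even though all those terminals are mutually within a bounded distance. The lesson is that an entire cluster of branching Steiner vertices must be assigned to a \emph{single} representative terminal, so that nearby terminals are attached in star-like rather than path-like fashion. I would therefore build $\phi$ hierarchically, forcing the territory radii to grow geometrically with scale, so that the radii met at successive crossings along a geodesic form a geometric series. Summing this series against $d_T(x,y)$ — split at the highest point $z$ of the geodesic (the common ancestor of $x$ and $y$) into the two descending arcs, with a separate accounting at the apex $z$ — is where the constant is won or lost; pinning it down to the value that produces exactly the factor $8$ is the delicate step, and the place where I expect most of the work to lie.
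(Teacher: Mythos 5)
Your reduction is sound as far as it goes, but it stops exactly where the actual theorem begins. Note first that the paper does not prove this statement at all: it is quoted directly from Gupta \cite{Gup01}, so the only argument to compare against is Gupta's, whose entire content is the construction you leave unspecified. Within your proposal, the preprocessing, the contraction framework (connected fibers, edge weights equal to true $T$-distances), the non-contraction lower bound, and the charging estimate $d_{\tilde T}(x,y)\le d_T(x,y)+\sum\bigl(\rho(u)+\rho(v)\bigr)$ over crossing edges are all correct, and the caterpillar example showing that ``assign each Steiner vertex to its nearest terminal'' has unbounded distortion is a genuinely good observation. But the claim to which you reduce everything --- that there exists an assignment $\phi$ with connected fibers such that along every terminal-to-terminal geodesic the territory radii at the crossing points sum to at most $7\,d_T(x,y)$ --- is precisely the hard part of the theorem, and you never prove it. ``Build $\phi$ hierarchically, forcing the territory radii to grow geometrically with scale'' is not a construction: no scales are defined, no assignment rule is stated, connectivity of the fibers is not verified, and the claimed geometric series is never summed. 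You concede this yourself (``the delicate step, and the place where I expect most of the work to lie''), which makes the proposal a plan for a proof rather than a proof.

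The gap is not a formality. Your own counterexample shows that natural choices of $\phi$ fail, so the existence of \emph{any} connected-fiber assignment with a universal constant is exactly what must be established; moreover, the known lower bounds for this problem (distortion at least $4$ in \cite{Gup01}, later improved) live precisely inside this class of contraction-type constructions, so the constant cannot be extracted by soft arguments. Gupta's proof consists in exhibiting a concrete recursive rule for choosing the representative terminals and then performing the charging argument you outline; until that kernel is supplied, your argument shows only that the theorem would follow from an unproved combinatorial claim, not the theorem itself.
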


\begin{corollary}\label{C:SubsL1} Let $T$ be a weighted tree and $M$ be a subset of $V(T)$.
Then the Banach-Mazur distance between $\lf(M)$ (where $M$ is
endowed with the metric induced from $T$) and $\ell_1^k$ of the
corresponding dimension does not exceed $8$.
\end{corollary}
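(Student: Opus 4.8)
The plan is to combine the two preceding results in the obvious way. By Theorem~\ref{T:Gupta} there is a weighted tree $\tilde T$ with vertex set exactly $M$ such that the metric on $M$ induced by $T$ and the metric coming from $\tilde T$ are $8$-equivalent; that is, there is a constant so that $d_{\tilde T}(p,q) \le d_T(p,q) \le 8\, d_{\tilde T}(p,q)$ for all $p,q\in M$ (after suitable normalization). First I would record that an $8$-equivalence of metrics on the \emph{same} point set $M$ induces a natural isomorphism between the two Lipschitz free spaces with Banach--Mazur distance at most $8$. This is because the identity map $\iota\colon M\to M$ is bi-Lipschitz with $\lip(\iota)\cdot\lip(\iota^{-1})\le 8$, and a bi-Lipschitz bijection between pointed metric spaces extends linearly to an isomorphism of the associated free spaces whose norm times the norm of its inverse is bounded by the product of the Lipschitz constants. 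Concretely, both free spaces are built from the same space of molecules on $M$ (Definition~\ref{D:molLF}), and the two seminorms $\|\cdot\|_{\lf,\,T}$ and $\|\cdot\|_{\lf,\,\tilde T}$ defined via the two metrics satisfy $\|m\|_{\lf,\,\tilde T}\le \|m\|_{\lf,\,T}\le 8\,\|m\|_{\lf,\,\tilde T}$, directly from the infimum formula defining the norm, since each term $|a_i|\,d(p_i,q_i)$ is distorted by at most the factor $8$.

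Next I would invoke Proposition~\ref{P:Tree}: since $\tilde T$ is a finite weighted tree with, say, $k$ edges, its Lipschitz free space $\lf(\tilde T)$ is \emph{isometric} to $\ell_1^k$. Here one must check that $k$ is indeed the "corresponding dimension," i.e.\ that $\lf(M)$ has the right linear dimension; but the dimension of the free space on a finite metric space with $|M|$ points is $|M|-1$, and a tree on the vertex set $M$ has exactly $|M|-1$ edges, so $k=|M|-1$ matches on both sides and the Banach--Mazur comparison is between spaces of equal dimension.

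Putting these together: $\lf(M)$ with the metric from $T$ is $8$-isomorphic (in Banach--Mazur distance) to $\lf(M)$ with the metric from $\tilde T$, which in turn equals $\lf(\tilde T)$ isometrically, which is isometric to $\ell_1^k$ by Proposition~\ref{P:Tree}. Composing, the Banach--Mazur distance between $\lf(M)$ and $\ell_1^k$ is at most $8\cdot 1\cdot 1 = 8$, as claimed.

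I expect the only genuine point requiring care to be the first step: verifying that a bi-Lipschitz equivalence of the \emph{metric} on a fixed finite set $M$ transfers to the free space with a Banach--Mazur distance bounded by the same constant. This is routine but deserves a sentence, because the free-space norm is defined by an infimum over representations of a molecule as a combination of elementary molecules $m_{p_iq_i}$, and one must confirm that distorting every pairwise distance by a factor in $[1,8]$ distorts this infimum by a factor in the same range --- which is immediate termwise but should be stated. Everything else is a direct citation of Theorem~\ref{T:Gupta} and Proposition~\ref{P:Tree}.
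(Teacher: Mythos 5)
Your proposal is correct and follows exactly the route the paper intends: apply Theorem~\ref{T:Gupta} to replace the induced metric on $M$ by an $8$-equivalent weighted-tree metric on the same vertex set, observe that a bi-Lipschitz change of metric on a fixed finite set distorts the molecule norm (and hence the Banach--Mazur class of the free space) by at most the same factor, and then invoke Proposition~\ref{P:Tree} together with the dimension count $k=|M|-1$. The paper states the corollary without writing out these steps, so your write-up simply makes explicit the argument the authors regard as immediate.
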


\begin{remark} Gupta \cite{Gup01} did not show that the constant
$8$ is sharp, his lower estimate for the constant is $4$. It is
not clear what is the optimal constant in Corollary
\ref{C:SubsL1}.
\end{remark}

Since it is well-known that ultrametrics can be isometrically
embedded into weighted trees (see, for example, \cite[Theorem
9]{CM10}, and also \cite[Section 3]{DSW17}), we get also the
following finite version of results of \cite{Dal15} and
\cite{CD16}:

\begin{corollary}\label{C:Ultra} Let $M$ be a finite ultrametric space. Then  $\lf(M)$ is
$8$-isomorphic to $\ell_1^k$.
\end{corollary}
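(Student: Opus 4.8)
The plan is to read the statement off from Corollary~\ref{C:SubsL1} once the ultrametric has been placed inside a weighted tree, so that the whole argument reduces to invoking one external fact together with a result already established above.

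First I would use the classical fact cited just before the statement, namely that every finite ultrametric space embeds isometrically into a weighted tree: there is a weighted tree $T$ and an isometry of $M$ onto a set of points of $T$. Identifying $M$ with its image, I may assume $M\subseteq V(T)$; if the embedding sends some points of $M$ into the interior of edges of $T$, I simply subdivide those edges at the relevant points so that they become vertices, which leaves the path metric of $T$ unchanged. Since the embedding is isometric, the metric that $T$ induces on $M$ coincides with the original ultrametric on $M$.

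Second I would apply Corollary~\ref{C:SubsL1} to the pair $(T,M)$. It asserts exactly that, for $M$ a subset of $V(T)$ endowed with the induced metric, the Banach--Mazur distance between $\lf(M)$ and $\ell_1^k$ of the matching dimension is at most $8$; here $k=\dim\lf(M)=|M|-1$. Combined with the previous paragraph, in which the induced metric was identified with the ultrametric, this yields that $\lf(M)$ is $8$-isomorphic to $\ell_1^k$, which is the claim.

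There is no genuinely hard step: Gupta's theorem (Theorem~\ref{T:Gupta}) and its consequence Corollary~\ref{C:SubsL1} already do all the work, reducing the problem to the observation that finite ultrametrics live inside weighted trees. The only point deserving a word of care is the reduction to $M\subseteq V(T)$, handled by edge subdivision as above; everything else is bookkeeping with the dimension count $k=|M|-1$.
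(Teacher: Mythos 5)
Your proposal is correct and matches the paper's own argument: the paper derives Corollary~\ref{C:Ultra} precisely by citing the isometric embeddability of finite ultrametrics into weighted trees and then invoking Corollary~\ref{C:SubsL1} (itself a consequence of Gupta's Theorem~\ref{T:Gupta} and Proposition~\ref{P:Tree}). Your extra care about subdividing edges so that $M$ lands in $V(T)$ is a reasonable bit of bookkeeping that the paper leaves implicit.
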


To see that there are metric spaces of different nature whose
Lipschitz free spaces are also close to $\ell_1^k$ of the
corresponding dimension, we use \eqref{E:LFunweigh}. This equality
implies that if we consider a graph $G$ which contains small
amount of cycles, or all cycles in it are disjoint, then $\lf(G)$
is close to $\ell_1^n$ of the corresponding dimension.

The space $\lf(G)$ remains close to $\ell_1^n$ for metric spaces
which are bilipschitz equivalent to graphs having properties
described in the previous paragraph. One of the ways of getting
such metric spaces is deletion of edges forming short cycles, see
\cite{Pel00} on results related to this construction, especially
\cite[Section 17.2]{Pel00}. It is worth mentioning that
bilipschitz equivalent metric spaces can have quite different
structure of cycle spaces. Consider, for example, $K_n$ (complete
graph on $n$ vertices) and the graph $K_{1,n-1}$ consisting of $n$
vertices in which the first vertex is adjacent to all other
vertices, and there are no other edges. Any bijection between
these metric spaces has distortion $2$, the cycle space $Z(K_n)$
is a large space whereas $Z(K_{1,n-1})$ is trivial.

\begin{problem}\label{P:CloseToell1} It would be very interesting to find a condition on a finite metric space $M$ which is equivalent
to the condition that the space $\lf(M)$ is Banach-Mazur close to
$\ell_1^n$ of the corresponding dimension. It is not clear whether
it is feasible to find such a condition.
\end{problem}

\section{Large complemented $\ell_1^n$ in finite-dimensional
Lipschitz\\ free spaces}\label{S:LargeL1}

The following result can be regarded as a finite-dimensional
version of the result of C\'uth, Doucha, and Wojtaszczyk
\cite{CDW16} who proved that the Lipschitz free space on an
infinite metric space contains a complemented subspace isomorphic
to $\ell_1$.

\begin{theorem}\label{T:HalfDimL1} For every $n$-point metric space $M$ the
space $\lf(M)$ contains a $2$-complemented $2$-isomorphic copy of
$\ell_1^k$ with $k=\lfloor\frac{n}2\rfloor$.
\end{theorem}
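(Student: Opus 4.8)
The plan is to pass to the dual description \eqref{E:LFdual} and reduce the whole statement to the construction of a single biorthogonal system. Concretely, I will look for $k=\lfloor n/2\rfloor$ pairs $(p_i,q_i)$ of distinct points of $M$, together with functions $f_1,\dots,f_k\in\lip_0(M)$, such that (with $d_i:=d_M(p_i,q_i)=\|m_{p_iq_i}\|_{\lf}$)
\[
f_i(p_j)-f_i(q_j)=\delta_{ij}\,d_i\qquad\text{and}\qquad \sum_{i=1}^k|f_i(x)-f_i(y)|\le 2\,d_M(x,y)\ \ \forall x,y\in M .
\]
The first (biorthogonality) condition makes $P(m)=\sum_{i=1}^k d_i^{-1}f_i(m)\,m_{p_iq_i}$ a linear projection onto $Y:=\mathrm{span}\{m_{p_iq_i}\}$. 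The second condition says exactly that for every sign vector $\ep\in\{\pm1\}^k$ the function $g_\ep=\sum_i\ep_if_i$ satisfies $\lip(g_\ep)\le 2$, because $\sum_i|f_i(x)-f_i(y)|=\max_\ep|g_\ep(x)-g_\ep(y)|$. Choosing $\ep_i=\sign f_i(m)$ gives $\|P(m)\|_{\lf}\le\sum_i|f_i(m)|=g_\ep(m)\le 2\|m\|_{\lf}$, so $\|P\|\le 2$; choosing $\ep_i=\sign a_i$ for $m=\sum a_i m_{p_iq_i}\in Y$ gives $\sum_i|a_i|d_i=g_\ep(m)\le 2\|m\|_{\lf}$, which together with the automatic triangle-inequality bound $\|m\|_{\lf}\le\sum_i|a_i|d_i$ shows that $m_{p_iq_i}\mapsto d_i e_i$ is a $2$-isomorphism of $Y$ onto $\ell_1^k$. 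Thus everything is reduced to exhibiting such pairs and functions.

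For the construction I would build the matching greedily by repeatedly extracting a \emph{closest remaining pair}, producing pairs at increasing scales $d_1\le d_2\le\cdots\le d_k$ with the property that at the moment $(p_i,q_i)$ is removed all surviving points are pairwise $\ge d_i$ apart. For the functionals the natural candidate is a scale-localized one-sided ramp: choosing a set $B_i$ on the $q_i$-side of a cut that separates $p_i$ from $q_i$ and for which the edge $\{p_i,q_i\}$ realizes the cut distance (so $d(x,B_i)\ge d_i$ for $x$ on the $p_i$-side), set
\[
f_i(x)=\min\bigl(d_i,\,d_M(x,B_i)\bigr).
\]
Each such $f_i$ is $1$-Lipschitz as a minimum of $1$-Lipschitz functions, satisfies $f_i(p_i)-f_i(q_i)=d_i$, and, because it is flat (equal to $0$, resp.\ $d_i$) outside a band of width $d_i$ around the cut, it automatically vanishes on every pair that survives to a later, coarser stage and on every pair lying far from the cut. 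The closest-pair/cut property is precisely what keeps every $f_i$ genuinely $1$-Lipschitz against the original metric $d_M$ rather than against an inflated tree metric.

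The hard part, and the real content of the theorem, is to make the biorthogonality and the $\ell_1$-Lipschitz bound hold \emph{simultaneously across all scales}. Two phenomena must be controlled: a coarse-scale $f_i$ can fail to vanish on a fine-scale pair $(p_j,q_j)$ with $j<i$ that has already been removed but happens to sit inside the width-$d_i$ band of $f_i$; and a single point $x$ may lie in the supports of many $f_i$ at different scales, threatening the bound $\sum_i|f_i(x)-f_i(y)|\le 2\,d_M(x,y)$. Genuinely disjoint supports cannot be arranged in general (clustered configurations such as $\{0,\ep,2\ep,R\}$ with $R$ large, or points on a circle, already defeat a fixed-radius tent construction), so I would instead organize the construction hierarchically along the closest-pair tree: before defining the coarse functionals one collapses each already-matched fine cluster to a single representative, so that at scale $d_i$ the functional only ``sees'' representatives that are $\ge d_i$-separated. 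The aim of this telescoping is to guarantee that for any $x,y$ only the functionals at the two endpoints' active scales contribute, yielding the factor $2$, while the collapsing repairs biorthogonality to the suppressed finer pairs without enlarging any Lipschitz constant. Verifying that this multiscale collapse preserves both the exact biorthogonality and the uniform bound $\lip(g_\ep)\le 2$ is where I expect the main technical effort to lie.
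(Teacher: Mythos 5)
Your duality scheme is correct, and it is in fact exactly the scheme the paper uses (via Lemma \ref{L:CDW}): a biorthogonal system of molecules $m_{p_iq_i}$ and functions $f_i\in\lip_0(M)$ with $\lip\left(\sum_i\ep_if_i\right)\le 2$ for all choices of signs yields both the $2$-isomorphism onto $\ell_1^k$ and the $2$-bounded projection, by the computations you give. The genuine gap is that you never produce such a system: the whole content of Theorem \ref{T:HalfDimL1} lies in the construction, and you explicitly defer it (``where I expect the main technical effort to lie''). Moreover, the route you sketch meets a real obstruction, not just unfinished bookkeeping. Take the points $0,1,-2,-5,-9,-19,-30,-61,\dots$ on the line, where each new pair is placed so that it is the closest surviving pair; greedy extraction yields exactly the pairs $\{0,1\},\{-2,-5\},\{-9,-19\},\{-30,-61\},\dots$ with $d_1=1$, $d_2=3$, $d_3=10$, $d_4=31,\dots$. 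Taking $B_i=\{q_i\}$ with $q_i$ the right endpoint of each pair (this choice satisfies your stated cut condition), one computes $|f_i(0)-f_i(1)|=1$ for \emph{every} $i$, so $\sum_i|f_i(0)-f_i(1)|$ equals the number of pairs while $2\,d_M(0,1)=2$; biorthogonality to the pair $\{0,1\}$ fails as well. Fatter cuts $B_i$ rescue this particular example because cuts on the line are laminar, but in a general metric space pairs straddle cuts, and your proposed repair --- collapsing matched clusters before defining the coarser $f_i$ --- raises a problem you do not address: collapsing only shrinks distances, so one must re-prove that $d(p_i,B_i)\ge d_i$ survives in the quotient (chains through collapsed clusters create shortcuts), and this is precisely the normalization $f_i(p_i)-f_i(q_i)=d_i$ that biorthogonality needs. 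None of this is verified, so the proposal is not a proof.

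For comparison, the paper kills the multiscale interference with two ideas absent from your sketch. First, it does not insist on a perfect matching into disjoint pairs: it only needs $k\ge n/2$ points $y_i$ such that some \emph{global} nearest neighbor $x_i$ of $y_i$ lies outside $\{y_1,\dots,y_k\}$, and the partners $x_i$ are allowed to repeat. Such a set is found combinatorially: build a minimum spanning tree by Kruskal's algorithm (it contains, for each vertex, a shortest edge incident to that vertex), two-color the tree, and let $\{y_i\}_{i=1}^k$ be the larger color class; each $y_i$ then has a nearest neighbor $x_i$ in the other class. Second, instead of ramps it uses spikes, $f_i=d(y_i,x_i)\1_{y_i}$, supported on the single point $y_i$. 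Then for any $x,y\in M$ at most two terms of $\sum_i|f_i(x)-f_i(y)|$ are nonzero, and the nearest-neighbor property $d(y_i,x_i)\le d(y_i,z)$ for all $z\ne y_i$ gives $\sum_i|f_i(x)-f_i(y)|\le 2\,d_M(x,y)$ immediately; biorthogonality with $u_i=(\1_{y_i}-\1_{x_i})/d(y_i,x_i)$ is automatic because no $x_j$ coincides with any $y_i$. With that system in hand, your own duality computation finishes the proof verbatim.
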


The following lemma is a version of \cite[Lemma 3.1]{CDW16}.

\begin{lemma}\label{L:CDW} Let $(M,d)$ be a finite metric space and $\{y_i\}_{i=1}^k$ be a sequence of distinct points in
$M$ such that $M\backslash \{y_i\}_{i=1}^k$ is nonempty. For each
$i\in\{1,\dots,k\}$ let $x_i\in M\backslash \{y_i\}_{i=1}^k$ be
such that the distance $d(x_i,y_i)$ is minimized, so
$\{x_i\}_{i=1}^k$ are not necessarily distinct. Then linear
combinations of the functions $f_i(x)=d(y_i,x_i)\1_{y_i}(x)$
satisfy the inequality
\begin{equation}\label{E:LinfLip}
\max_i|\alpha_i|\le
\lip\left(\sum_{i=1}^k\alpha_if_i\right)\le\max\left\{\max_{i\ne
j}\frac{d(x_i,y_i)+d(x_j,y_j)}{d(y_i,y_j)}\,,1\right\}\cdot\max_i|\alpha_i|
\end{equation}
\end{lemma}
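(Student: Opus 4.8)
The plan is to compute the Lipschitz constant of $g := \sum_{i=1}^k \alpha_i f_i$ directly from the definition of $\lip(\cdot)$ as a supremum of difference quotients, exploiting that $g$ is supported on the finite set $\{y_1,\dots,y_k\}$ and vanishes everywhere else. Concretely, since the $y_i$ are distinct we have $g(y_i)=\alpha_i\, d(y_i,x_i)$ for each $i$, while $g(z)=0$ for every $z\in M\setminus\{y_1,\dots,y_k\}$. In particular $g(x_i)=0$, because each $x_i$ was chosen to lie \emph{outside} the set $\{y_1,\dots,y_k\}$; I would record this observation first, as every estimate below rests on it. (The hypothesis that $M\setminus\{y_i\}_{i=1}^k$ is nonempty is exactly what guarantees the points $x_i$ exist, so I would flag it here.)

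For the lower bound I would simply test the difference quotient of $g$ at the pair $(y_i,x_i)$. Since $g(y_i)-g(x_i)=\alpha_i\, d(y_i,x_i)$, this quotient equals $|\alpha_i|$, whence $\lip(g)\ge|\alpha_i|$ for every $i$ and therefore $\lip(g)\ge\max_i|\alpha_i|$. This already gives the left inequality in \eqref{E:LinfLip}.

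For the upper bound, write $A=\max_i|\alpha_i|$ and bound $|g(p)-g(q)|/d(p,q)$ over all pairs $p\ne q$ by a case split according to how many of $p,q$ lie in $\{y_1,\dots,y_k\}$ (the case where neither does gives quotient $0$). If exactly one of them is some $y_i$, with the other point $z$ lying in the complement where $g$ vanishes, the quotient is $|\alpha_i|\, d(y_i,x_i)/d(y_i,z)$; the key point is that $x_i$ minimizes the distance from $y_i$ to $M\setminus\{y_1,\dots,y_k\}$, so $d(y_i,x_i)\le d(y_i,z)$ and this quotient is at most $|\alpha_i|\le A$. If both points are of the form $y_i,y_j$ with $i\ne j$, the triangle inequality applied to the numerator gives $|\alpha_i d(y_i,x_i)-\alpha_j d(y_j,x_j)|\le A\bigl(d(y_i,x_i)+d(y_j,x_j)\bigr)$, so the quotient is at most $A\cdot\bigl(d(x_i,y_i)+d(x_j,y_j)\bigr)/d(y_i,y_j)$. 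Taking the supremum over all pairs then produces exactly the factor $\max\{\max_{i\ne j}(d(x_i,y_i)+d(x_j,y_j))/d(y_i,y_j),\,1\}$ multiplying $A$, which is the right inequality in \eqref{E:LinfLip}.

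I do not anticipate a serious obstacle: once one notes that $g$ vanishes off $\{y_1,\dots,y_k\}$, the argument is essentially bookkeeping. The only place demanding genuine care — and the only place where the defining property of the $x_i$ is actually used — is the ``one point in the set'' case, where minimality of $d(y_i,x_i)$ over the complement is precisely what controls the quotient against points where $g=0$; without that choice the upper bound would fail. I would therefore present that case most carefully and treat the $(y_i,y_j)$ case as a routine triangle-inequality estimate.
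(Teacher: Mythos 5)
Your proposal is correct and follows essentially the same argument as the paper: the lower bound by testing the difference quotient at the pairs $(y_i,x_i)$, and the upper bound by the same three-way case analysis (both points among the $y_i$'s, exactly one, or neither), using the minimality of $d(x_i,y_i)$ in the mixed case and the triangle inequality in the $(y_i,y_j)$ case. No gaps.
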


\begin{proof} The leftmost inequality is obtained by comparing
the values of $\sum_{i=1}^k\alpha_if_i$ at $x_m$ and $y_m$, where
$m\in\{1,\dots,k\}$ is such that $\alpha_m=\max_i|\alpha_i|$.

To prove the rightmost inequality we do the following analysis:
consider any pair $(u,v)$ of points in $M$ and estimate from above
the quotient
\[\left|\sum_{i=1}^k\alpha_if_i(u)-\sum_{i=1}^k\alpha_if_i(v)\right|/d(u,v).\]

If the points $u$ and $v$ are $y_i$ and $y_j$, $i\ne j$, then the
estimate from above is
\[\displaystyle{\frac{d(x_i,y_i)+d(x_j,y_j)}{d(y_i,y_j)}\cdot\max_i|\alpha_i|}.\]
If one of the points is $y_i$ and the other is not in the sequence
$\{y_i\}_{i=1}^k$, we get at most $\max_i|\alpha_i|$, because of
the minimality property of $d(x_i,y_i)$. If both $u$ and $v$ are
not in $\{y_i\}_{i=1}^k$, then
$\sum_{i=1}^k\alpha_if_i(u)=\sum_{i=1}^k\alpha_if_i(v)=0$.
\end{proof}

\begin{proof}[Proof of Theorem \ref{T:HalfDimL1}]
Any finite metric space can be considered as a weighted graph with
the weighted graph distance (we may consider elements of the
metric space as vertices of a complete graph with the weight of
each edge equal to the distance between its ends).

Consider the minimum weight spanning tree $T$ in this graph
constructed according to Boruvka-Kruskal procedure
\cite[Construction A]{Kru56} (see also \cite[Algorithm 8.22]{BM08}), that is: we list edges in the order
of nondecreasing lengths; then we process this list from the
beginning and pick for the spanning tree all edges which do not
form cycles with the previously selected.

It is easy to see that the picked set of edges satisfies the
following condition: at least one of the shortest edges incident
to each of the vertices is in the spanning tree.

Any tree is a bipartite graph. Therefore we can split $M$ into two
subsets, $M_1$ and $M_2$, such that any edge in the spanning tree
$T$ has one vertex in $M_1$ and the other in $M_2$. At least one
of the sets $M_1$ and $M_2$ contains at least half of elements of
$M$. We assume that $M_1$ is such and label its vertices as
$\{y_i\}_{i=1}^k$. For each $i$ we let $x_i$ be the closest to
$y_i$ element of $M_2$ (the elements $\{x_i\}_{i=1}^k$ are not
required to be distinct). The comment in the previous paragraph
implies that $x_i\in M_2$ is one of the closest to $y_i$ and
different from $y_i$ elements of $M$. Hence, by Lemma \ref{L:CDW},
the subspace of $\lip_0(M)$ (we pick the base point to be any
element of $M\backslash \{y_i\}_{i=1}^k$) spanned by
$\{d(y_i,x_i)\1_{y_i}\}_{i=1}^k$ is $2$-isomorphic to
$\ell_\infty^k$ and thus $2$-complemented in $\lip_0(M)$.

Consider the functions $u_i=(\1_{y_i}-\1_{x_i})/d(x_i,y_i)$ in
$\lf(M)$. We claim that $\{u_i\}_{i=1}^k$ span a $2$-complemented
subspace in $\lf(M)$ which is $2$-isomorphic to $\ell_1^k$. It is
clear that $||u_i||=1$ and $f_i(u_j)=\delta_{i,j}$ (Kronecker
$\delta$). Let $\{b_i\}_{i=1}^k$ be a sequence of real numbers
satisfying $\sum_{i=1}^k|b_i|=1$, and $x=\sum_{i=1}^k b_iu_i$. We
need to estimate the norm of $x$. Clearly
$||x||\le\sum_{i=1}^k|b_i|=1$.

On the other hand, let $\alpha_i=\sign(b_i)$. Then, by the first
part of the proof,
\[1\le\left\|\sum_{i=1}^k\alpha_if_i\right\|\le 2\]
On the other hand
\[\left(\sum_{i=1}^k\alpha_if_i\right)(x)=\sum_{i=1}^k\alpha_i
b_i=\sum_{i=1}^k |b_i|.\] Hence $\frac12\le||x||\le 1$.

Now we show that the linear span of $\{u_i\}$ is complemented. We
introduce $P:\lf(M)\to\lin\{u_i\}$ by
\[P(u)=\sum_{i=1}^k f_i(u)u_i.\]
It is clear that $P$ is a linear projection. Let us estimate its
norm. Let $f\in\lip_0(M)$ be such that $||f||=1$ and
$f(P(u))=||P(u)||$. Then
\[||P(u)||=\sum_{i=1}^k f_i(u)f(u_i)\le \left\|\sum_{i=1}^k
f(u_i)f_i\right\|\cdot||u||\le 2\max_i|f(u_i)|\cdot||u||\le
2||u||.\]

It remains to recall that the construction is such that
$k\ge\card(M)/2$.
\end{proof}

\begin{problem} Whether the constant $2$ in the statement ``$2$-complemented $2$-iso\-mor\-phic'' of Theorem \ref{T:HalfDimL1} is sharp?
\end{problem}

It is not surprising that Theorem \ref{T:HalfDimL1} can be
sharpened for some classes of graphs. In Theorem \ref{T:PropDiam}
we sharpen it for the diamond graphs.

It is natural to ask: How and when can we go beyond
half-dimensional subspace? It is easy to see that the following
result  can be proved on the same lines as Theorem
\ref{T:HalfDimL1}.

\begin{theorem}\label{T:LargeL1}
Let $M$ be a finite metric space and $\{y_i\}_{i=1}^k$ be a
sequence in it such that $M\backslash\{y_i\}_{i=1}^k$ is nonempty.
Let $d_i=d(y_i,(M\backslash\{y_i\}_{i=1}^k))$ and
\begin{equation}\label{E:IPConst} C=\max\left\{\max_{i\ne j}\frac{d_i+d_j}{d(y_i,y_j)},1\right\}.\end{equation} Then $\lf(M)$
contains as $C$-complemented subspace which is $C$-isomorphic to
$\ell_1^k$ and $\lip_0(M)$ contains a $C$-complemented subspace
which is $C$-isomorphic to $\ell_\infty^k$.
\end{theorem}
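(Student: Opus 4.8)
The plan is to follow the proof of Theorem~\ref{T:HalfDimL1} essentially verbatim, replacing the concrete ingredients of that argument (the minimum weight spanning tree and its bipartition, which were only there to force the isomorphism constant down to $2$) by the abstract quantity $C$ of \eqref{E:IPConst}. The single observation that makes this work is that if, for each $i$, we pick $x_i\in M\backslash\{y_j\}_{j=1}^k$ minimizing $d(x_i,y_i)$, then $d(x_i,y_i)=d(y_i,M\backslash\{y_j\}_{j=1}^k)=d_i$; consequently the upper bound that Lemma~\ref{L:CDW} supplies for the functions $f_i(x)=d_i\1_{y_i}(x)$ is exactly $C$. So no new idea is needed beyond this identification of constants.

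First I would invoke Lemma~\ref{L:CDW} with this choice of $x_i$, fixing the base point to be any element of $M\backslash\{y_j\}_{j=1}^k$. It yields $\max_i|\alpha_i|\le\lip\left(\sum_{i=1}^k\alpha_if_i\right)\le C\max_i|\alpha_i|$, i.e. $\lin\{f_i\}$ is $C$-isomorphic to $\ell_\infty^k$ inside $\lip_0(M)$. Since $\ell_\infty^k$ is $1$-injective, any $C$-isomorphic copy of it is $C$-complemented in its superspace (extend the inverse isomorphism norm-preservingly to all of $\lip_0(M)$ and compose), which establishes the $\ell_\infty^k$ assertion. Alternatively, once the molecules $u_i$ below are introduced and $f_i(u_j)=\delta_{ij}$ is checked, the projection $Q(f)=\sum_{i=1}^k f(u_i)f_i$ does the job directly, with $\|Q\|\le C$ by the same estimate.

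Next I would pass to $\lf(M)$ and set $u_i=(\1_{y_i}-\1_{x_i})/d_i$. Here $\|u_i\|_{\lf}=1$ because $\|m_{y_ix_i}\|_{\lf}=d(y_i,x_i)=d_i$, and $f_i(u_j)=\delta_{ij}$ follows from the $y_l$ being distinct together with $x_j\notin\{y_l\}_l$ (so that $\1_{y_i}(x_j)=0$). Given $\{b_i\}$ with $\sum_{i=1}^k|b_i|=1$ and $x=\sum_{i=1}^k b_iu_i$, the triangle inequality gives $\|x\|\le 1$; for the reverse bound take $\alpha_i=\sign(b_i)$, so that $\left(\sum_{i=1}^k\alpha_if_i\right)(x)=\sum_{i=1}^k|b_i|=1$ while $\left\|\sum_{i=1}^k\alpha_if_i\right\|\le C$ by the previous paragraph, whence $\|x\|\ge 1/C$ via the duality \eqref{E:LFdual}. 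Thus $e_i\mapsto u_i$ is a $C$-isomorphism onto a copy of $\ell_1^k$.

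Finally, for complementation of $\lin\{u_i\}$ in $\lf(M)$ I would use $P(u)=\sum_{i=1}^k f_i(u)u_i$, which is a projection onto $\lin\{u_i\}$ because $f_i(u_j)=\delta_{ij}$. Choosing a norming $f\in\lip_0(M)$, $\|f\|=1$, for $P(u)$ and rewriting $\|P(u)\|=\left(\sum_{i=1}^k f(u_i)f_i\right)(u)\le\left\|\sum_{i=1}^k f(u_i)f_i\right\|\cdot\|u\|\le C\max_i|f(u_i)|\cdot\|u\|\le C\|u\|$ (using $|f(u_i)|\le\|u_i\|=1$) gives $\|P\|\le C$. I do not expect any genuine obstacle: the only points needing care are the verification $f_i(u_j)=\delta_{ij}$ and the two applications of \eqref{E:LFdual}, both routine once the identity $d(x_i,y_i)=d_i$ has reduced Lemma~\ref{L:CDW} to the constant $C$.
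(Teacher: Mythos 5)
Your proposal is correct and coincides with the paper's intended argument: the paper gives no separate proof of Theorem \ref{T:LargeL1}, stating only that it ``can be proved on the same lines as Theorem \ref{T:HalfDimL1}'', and your write-up is exactly that, with the key identification $d(x_i,y_i)=d_i$ reducing the upper bound of Lemma \ref{L:CDW} to the constant $C$ of \eqref{E:IPConst}. The duality step, the verification $f_i(u_j)=\delta_{ij}$, and the projections $P$ and $Q$ all match the template of the paper's proof of Theorem \ref{T:HalfDimL1} with $2$ replaced by $C$.
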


\begin{corollary}\label{C:LargeSub} If $M$ is a connected unweighted graph with $n$ vertices,
then for every $p\in\mathbb{N}$ with $p\le\diam(M)+1$ the space
$\lf(M)$ contains a subspace of dimension $d\ge
n\left(\frac{p-1}p\right)$ which is $4p$-complemented and is
$4p$-isomorphic to $\ell_1^d$, and $\lip_0(M)$ contains a
$4p$-complemented subspace which is $4p$-isomorphic to
$\ell_\infty^d$. If $p>\diam(M)$, we have the inequality
$d_{BM}(\lf(M),\ell_1^{n-1})\le 2p$ for the Banach-Mazur distance.
\end{corollary}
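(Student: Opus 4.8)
The plan is to deduce the corollary from Theorem \ref{T:LargeL1} by making a good choice of the sequence $\{y_i\}_{i=1}^k$ inside the vertex set of $M$. Since $M$ is an unweighted connected graph, all distances are nonnegative integers and any two distinct vertices are at distance at least $1$; this is the only lower bound I will use for the denominators $d(y_i,y_j)$ in \eqref{E:IPConst}. The whole game is therefore to delete few vertices (to keep $k$ large) while arranging that every surviving vertex $y_i$ is within distance $p-1$ of the deleted set $M\setminus\{y_i\}_{i=1}^k$, so that each $d_i\le p-1$ and hence $C\le 2(p-1)\le 4p$.

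To do this, fix a base vertex $O$ and stratify $V(M)$ into the breadth-first layers $L_t=\{v:d(O,v)=t\}$ for $t=0,1,\dots,\diam(M)$. Because $p\le\diam(M)+1$, there are at least $p$ layers, so every residue class modulo $p$ among the indices $\{0,\dots,\diam(M)\}$ is nonempty. Grouping the layers by the residue of their index modulo $p$ partitions $V(M)$ into $p$ classes whose sizes sum to $n$, so by pigeonhole some class $A_{r^*}$ has $|A_{r^*}|\le n/p$. I take the deleted set to be $R=A_{r^*}\cup\{O\}$ and let $\{y_i\}_{i=1}^k$ enumerate $Y=V(M)\setminus R$, so that $k=|Y|\ge n(p-1)/p$ (the adjoined base vertex $O$ is the only source of the minor slack between $n-|R|$ and $n(p-1)/p$).

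The key estimate is $d_i=d(y_i,R)\le p-1$ for every $y_i$. Given $y_i\in L_s$, walk along a geodesic from $y_i$ down to $O$: consecutive vertices on such a geodesic lie in layers whose indices drop by exactly $1$, so the path meets $L_s,L_{s-1},\dots,L_0$ in turn. If $s\ge r^*$ it reaches a vertex of $A_{r^*}$ after at most $p-1$ steps (at the largest index $\le s$ congruent to $r^*$ modulo $p$); if $s<r^*\le p-1$ it reaches $O\in R$ after $s\le p-2$ steps. Either way $d_i\le p-1$, and together with $d(y_i,y_j)\ge 1$ this gives $C=\max\{\max_{i\ne j}(d_i+d_j)/d(y_i,y_j),1\}\le 2(p-1)\le 4p$. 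Theorem \ref{T:LargeL1} then furnishes simultaneously a $C$-complemented, $C$-isomorphic copy of $\ell_1^k$ in $\lf(M)$ and a $C$-complemented, $C$-isomorphic copy of $\ell_\infty^k$ in $\lip_0(M)$, and $C\le 4p$ with $d=k$ yields the first assertion. The hard part is exactly this coupling of the two requirements: the pigeonhole wants $R$ small, while the geodesic argument needs $R$ to be \emph{hit} from below, and the vertices in layers $s<r^*$ are precisely those for which walking down toward $O$ need not reach $A_{r^*}$; adjoining $O$ to $R$ is what repairs this at the cost of one vertex.

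For the final assertion, suppose $p>\diam(M)$ and take $R=\{O\}$, i.e.\ $Y=V(M)\setminus\{O\}$ and $k=n-1$. Then $d_i=d(y_i,O)\le\diam(M)\le p-1$, so $C\le 2\diam(M)\le 2p$, and Theorem \ref{T:LargeL1} gives a $C$-isomorphic copy of $\ell_1^{n-1}$ inside $\lf(M)$. Since $M$ is connected, \eqref{E:LFunweigh} together with $\mu(M)=|E(M)|-|V(M)|+1$ shows $\dim\lf(M)=|V(M)|-1=n-1$, so this copy is all of $\lf(M)$; hence $d_{BM}(\lf(M),\ell_1^{n-1})\le C\le 2p$.
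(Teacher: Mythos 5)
Your argument for the first assertion follows the paper's own route --- stratify $V(M)$ by distance to a base vertex modulo $p$, delete the smallest residue class found by pigeonhole, and feed the survivors into Theorem \ref{T:LargeL1} --- but your modification of adjoining $O$ to the deleted set introduces a genuine off-by-one error in the dimension count. When the smallest class $A_{r^*}$ does not contain $O$ (i.e.\ $r^*\ne 0$), your deleted set $R=A_{r^*}\cup\{O\}$ has $|A_{r^*}|+1$ elements, so your construction only guarantees $k\ge n-\lfloor n/p\rfloor-1$, which can be strictly smaller than the required $n\left(\frac{p-1}{p}\right)$. Concretely, let $M$ be the path $v_0v_1v_2v_3v_4$ with $O=v_0$ and $p=2$: the smallest class is $A_1=\{v_1,v_3\}$, your deleted set is $\{v_0,v_1,v_3\}$, and $k=2$, whereas the corollary demands an integer $d\ge 5/2$, i.e.\ $d\ge 3$. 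Your parenthetical remark about ``minor slack'' acknowledges exactly this discrepancy, but the inequality $k\ge n(p-1)/p$ that you assert is simply false for your construction, so what you prove is weaker than the stated corollary. (A side remark: your claim that every residue class is nonempty also requires $O$ to be chosen with eccentricity equal to $\diam(M)$, though nonemptiness is not actually used in your version of the argument.)

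The repair is to delete $A_{r^*}$ alone, which is what the paper does, at the price of a worse bound on $d_i$. Choose $O$ realizing the diameter, so that every layer $L_t$, $0\le t\le \diam(M)$, is nonempty. For a surviving vertex $y\in L_s$ with $s\ge r^*$ your geodesic-descent argument already gives $d(y,A_{r^*})\le p-1$; for $s<r^*$ one does not need a deleted vertex below $y$: since $L_{r^*}\ne\emptyset$ (as $r^*\le p-1\le\diam(M)$), the triangle inequality through $O$ gives $d(y,A_{r^*})\le s+r^*\le (p-2)+(p-1)<2p$. Hence $d_i\le 2p$ for every survivor, the constant in \eqref{E:IPConst} satisfies $C\le 4p$, and now $k=n-|A_{r^*}|\ge n\left(\frac{p-1}{p}\right)$ exactly as required. (Your sharper bound $d_i\le p-1$ is only available when the smallest class happens to be the one containing $O$, which cannot be forced.) Your treatment of the case $p>\diam(M)$ is correct and is a pleasant alternative to the paper's: you apply Theorem \ref{T:LargeL1} with deleted set $\{O\}$ and conclude by a dimension count that the resulting $2p$-isomorphic copy of $\ell_1^{n-1}$ is all of $\lf(M)$, whereas the paper observes that $M$ is $2p$-bilipschitz equivalent to the star $K_{1,n-1}$ and invokes Proposition \ref{P:Tree}.
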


\begin{proof} Let $O$ be one of the vertices of $M$ for which $\max_{v\in M}d_M(O,v)=\diam(M)$. Assume that $p\le\diam(M)+1$. Consider partition
$M=\cup_{i=0}^{p-1}M_i$, where $M_i$ is the set of vertices in $M$
whose distance to $O$ is $i\pmod p$. The assumption
$p\le\diam(M)+1$ implies that all sets $M_i$ are nonempty. One of
the sets $\{M_i\}_{i=0}^{p-1}$ has cardinality $\le\frac{n}p$. Let
$\{y_i\}_{i=1}^k$ be the complement of this set. Its cardinality,
which we denote by $d$, is at least $n\left(\frac{p-1}p\right)$.
On the other hand, it is clear that $d_i\le 2p$ ($d_i$ is defined
in Theorem \ref{T:LargeL1}). Thus the constant $C$ defined in
\eqref{E:IPConst} is $\le 4p$. The conclusion follows.

The last statement is true because $p\ge \diam(M)$ implies that
the space $M$ is $2p$-bilipschitz equivalent to the graph
$K_{1,n-1}$ with its graph distance, and $\lf(K_{1,n-1})$ is
isometric to $\ell_1^{n-1}$ by Proposition \ref{P:Tree}.
\end{proof}

For some graphs the estimates of Theorems \ref{T:HalfDimL1},
\ref{T:LargeL1} and Corollary \ref{C:LargeSub} can be improved
significantly. It is interesting that this can be done even in the
case of diamond graphs $\{D_n\}$, while $\lf(D_n)$ are far from
$\ell_1^{d(n)}$ of the corresponding dimension, see Corollary
\ref{C:DiamLargeL1} below and Theorem \ref{C:LFDnNotL1}.
\medskip

\begin{theorem}\label{T:PropDiam} $\lf(D_n)$ contains a
1-complemented isometric copy of  $\ell_1^k$ with $k=2\cdot
4^{n-1}$.
\end{theorem}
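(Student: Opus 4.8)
The plan is to use the duality \eqref{E:LFdual} and exhibit the copy of $\ell_1^k$ by hand. Let $B=V(D_n)\setminus V(D_{n-1})$ be the set of vertices created at the last subdivision step, i.e.\ the midpoints of the two branches of each quadrilateral that replaced an edge of $D_{n-1}$. Since $D_n$ adds two new vertices per edge of $D_{n-1}$ and $|E(D_{n-1})|=4^{n-1}$, we have $|B|=2|E(D_{n-1})|=2\cdot 4^{n-1}=k$. Each $p\in B$ is the midpoint of one branch of the quadrilateral that replaced some edge $u_pv_p\in E(D_{n-1})$, so its only two neighbours in $D_n$ are $u_p,v_p\in V(D_{n-1})$, both at distance $1$. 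I would set
\[ w_p=\1_p-\tfrac12\bigl(\1_{u_p}+\1_{v_p}\bigr)\in\lf(D_n),\qquad p\in B, \]
and claim that $\{w_p\}_{p\in B}$ spans the desired subspace. The structural fact that drives everything is that $D_n$ is bipartite with colour classes exactly $V(D_{n-1})$ and $B$: the edges of $D_{n-1}$ are destroyed by the subdivision, and every edge of $D_n$ sits inside some quadrilateral and thus joins one of the old endpoints $u_p,v_p$ to one of the new midpoints. So every edge of $D_n$ joins an ``old'' vertex to a ``new'' one.

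Next I would establish the isometry by duality. The upper bound $\left\|\sum_p b_pw_p\right\|\le\sum_p|b_p|$ is immediate, since $w_p=\tfrac12 m_{pu_p}+\tfrac12 m_{pv_p}$ has norm at most $\tfrac12 d(p,u_p)+\tfrac12 d(p,v_p)=1$. For the matching lower bound, given a sign pattern $\epsilon\in\{\pm1\}^B$, I would define $f_\epsilon\in\lip_0(D_n)$ (base point any old vertex) by $f_\epsilon\equiv 0$ on $V(D_{n-1})$ and $f_\epsilon(p)=\epsilon_p$ for $p\in B$. By the bipartite observation every edge joins an old to a new vertex, so $|f_\epsilon(x)-f_\epsilon(y)|=1$ across each edge and $\lip(f_\epsilon)=1$; moreover $f_\epsilon(w_p)=\epsilon_p-0=\epsilon_p$. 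Taking $\epsilon_p=\sign(b_p)$ gives $\left\|\sum_p b_pw_p\right\|\ge f_\epsilon\!\left(\sum_p b_pw_p\right)=\sum_p|b_p|$, whence the span is isometric to $\ell_1^k$ (and the $w_p$ are automatically independent).

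For the $1$-complementation I would use the functionals $f_p=\1_p\in\lip_0(D_n)$, the indicator of the single new vertex $p$; this is $1$-Lipschitz precisely because the only neighbours $u_p,v_p$ of $p$ are old and carry value $0$, and clearly $f_p(w_q)=\delta_{pq}$. Setting $P(x)=\sum_{p\in B}f_p(x)\,w_p$ gives a projection onto the span, and since the range is isometrically $\ell_1^k$,
\[ \|P(x)\|=\sum_{p\in B}|f_p(x)|=\sup_{\epsilon\in\{\pm1\}^B}\Bigl(\sum_{p\in B}\epsilon_p f_p\Bigr)(x)\le\|x\|, \]
because $\sum_p\epsilon_p f_p=f_\epsilon$ has Lipschitz norm $1$. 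Hence $\|P\|=1$, completing the proof.

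The only genuine obstacle is the choice of molecules. The naive choice of the two edge-molecules $m_{u_p a},m_{a v_p}$ along a single branch fails for $n\ge 2$: certifying independent signs would force prescribed increments of the Lipschitz function along the edges of $D_{n-1}$, and these cannot be realized consistently around the cycles of $D_{n-1}$. Centering each molecule symmetrically at a new vertex, $w_p=\1_p-\tfrac12(\1_{u_p}+\1_{v_p})$, decouples the quadrilaterals from one another, and together with the bipartite structure it allows the single function $f_\epsilon$ to realize all sign patterns simultaneously. Everything else is the routine verification sketched above.
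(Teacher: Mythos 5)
Your proof is correct, and its skeleton coincides with the paper's: the paper likewise takes the last-level vertices as the distinguished set, uses the same dual functionals $\1_p\in\lip_0(D_n)$, and obtains the norm-one projection from the same biorthogonal formula. The differences are two implementation choices. First, the witnessing copy of $\ell_1^k$: the paper runs the argument of Theorem \ref{T:HalfDimL1} with the one-sided molecules $u_i=\1_{y_i}-\1_{x_i}$, where $x_i$ is one (arbitrarily chosen) of the two neighbours of $y_i$, whereas you use the symmetric molecules $w_p=\1_p-\frac12(\1_{u_p}+\1_{v_p})$; these span different subspaces of $\lf(D_n)$, but both verifications rest on the same two facts (each molecule has norm at most $1$ and is biorthogonal to the $\1_p$'s). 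Second, the key estimate $\lip\bigl(\sum_p\epsilon_p\1_p\bigr)\le 1$: you prove it edge-by-edge from the bipartition of $V(D_n)$ into old and new vertices, while the paper invokes Lemma \ref{L:CDW}, whose hypothesis here --- $d(y_i,y_j)\ge 2=d(x_i,y_i)+d(x_j,y_j)$ --- is precisely your bipartiteness observation restated in terms of pairwise distances. So your route is self-contained and choice-free, while the paper's is shorter (it reduces the theorem to the remark that the constant in \eqref{E:LinfLip} equals $1$) and shows in addition that the simpler asymmetric molecules suffice. In particular, the obstacle described in your final paragraph is milder than you suggest: what fails is only taking \emph{both} edge molecules along every branch (for the quadrilateral $u,a,v,b$ replacing an edge $uv$ one has $m_{ua}+m_{av}=m_{ub}+m_{bv}$, so those molecules are not even linearly independent); taking \emph{one} edge molecule per new vertex, paired with the same functionals $\1_p$, goes through verbatim.
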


Note that for large $n$ the number $2\cdot 4^{n-1}$ is very close
to $\frac34\,|V(D_n)|$, see page \pageref{P:(b)}.

\begin{proof} We use an argument similar to the argument of Theorem
\ref{T:HalfDimL1} with the following choice of $\{y_i\}_{i=1}^k$:
the vertices $\{y_i\}_{i=1}^k$ are the vertices added to the graph
in the last step. Formula {\bf (b)} on page~\pageref{P:(b)}
implies that $k=2\cdot 4^{n-1}$. The vertex $x_i$ is chosen to be
one of the (two) closest to $y_i$ vertices in $D_n$. In this case
$d(x_i,y_i)=1$ and $d(y_i,y_j)\ge2$ for $i\ne j$. Hence the same
argument as in Theorem \ref{T:HalfDimL1} leads to a subspace
isometric to $\ell_1^k$ and $1$-complemented.
\end{proof}

Corollary of Theorem \ref{T:LargeL1} for diamonds:

\begin{corollary}\label{C:DiamLargeL1} For each $m<n$ the space $\lf(D_n)$ contains a $C$-complemented
$C$-isomorphic to $\ell_1^k$ subspace with $C=2^{n-m}$ and
$k=2(1+\sum_{i=0}^{n-1}4^i)-2\cdot 4^{m-1}$.
\end{corollary}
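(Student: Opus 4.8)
The plan is to apply Theorem~\ref{T:LargeL1} directly to $M=D_n$, so that the whole argument reduces to three tasks: (i) choosing a good sequence $\{y_i\}$ whose complement in $V(D_n)$ plays the role of the ``anchor'' set, (ii) counting $k=|\{y_i\}|$, and (iii) estimating the constant $C$ defined in \eqref{E:IPConst}. This is exactly the scheme already used in Theorem~\ref{T:PropDiam} and in Corollary~\ref{C:LargeSub}; the only genuinely new ingredient is that the dyadic self-similarity of the diamonds should let one get away with a much sparser anchor set than the generic distance-mod-$p$ partition of Corollary~\ref{C:LargeSub}, while still keeping $C$ as small as $2^{n-m}$.

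To fix the anchor set I would exploit the self-similar decomposition $D_n=D_{m-1}\oslash D_{n-m+1}$ furnished by \eqref{E:DiffProd}: here $D_n$ is obtained by gluing $|E(D_{m-1})|=4^{m-1}$ isometric copies of $D_{n-m+1}$ along the skeleton $D_{m-1}$, each copy meeting the rest of the graph only at its own top and bottom. Inside a single copy the first subdivision produces two ``midpoint'' vertices, which lie at distance $2^{n-m}$ from both the top and the bottom of that copy (recall $\diam(D_{n-m+1})=2^{n-m+1}$). I would take the anchor set $A$ to consist of these two midpoints from every copy, so that $|A|=2\cdot 4^{m-1}$, and set $\{y_i\}_{i=1}^k=V(D_n)\setminus A$. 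Since $|V(D_n)|=2(1+\sum_{i=0}^{n-1}4^i)$, this at once gives the stated value $k=2(1+\sum_{i=0}^{n-1}4^i)-2\cdot 4^{m-1}$, and $M\setminus\{y_i\}=A$ is nonempty.

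It then remains to bound $C$. The two midpoints of a copy split it into two halves of diameter $2^{n-m}$, and distinct copies are joined only through skeleton vertices, so every vertex of $D_n$ lies within distance $2^{n-m}$ of $A$; that is, $d_i\le 2^{n-m}$ for all $i$. The delicate point, which I expect to be the main obstacle, is to control the full ratio $\tfrac{d_i+d_j}{d(y_i,y_j)}$ rather than just its numerator: the extremal configurations occur near the top or bottom of a copy, where $d_i$ attains its maximum while a neighbouring $y_j$ can still be present, and one must verify that any $y_j$ close to such a $y_i$ carries a correspondingly small $d_j$ so that the ratio never exceeds $2^{n-m}$. I would handle this by arguing along a geodesic inside a single copy, using that the distance-to-$A$ function is the ``tent'' that climbs to $2^{n-m}$ at a midpoint and returns to $0$, and by treating the skeleton vertices (the tops and bottoms of the copies), which realize the largest values of $d_i$, as the critical case. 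Substituting the resulting bound $C=2^{n-m}$ into Theorem~\ref{T:LargeL1} then yields the $C$-complemented, $C$-isomorphic copy of $\ell_1^k$ in $\lf(D_n)$, together with the dual copy of $\ell_\infty^k$ in $\lip_0(D_n)$.
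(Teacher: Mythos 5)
Your construction is in fact the same as the paper's: the two ``midpoints'' of each copy of $D_{n-m+1}$ in the decomposition $D_n=D_{m-1}\oslash D_{n-m+1}$ are exactly the vertices added when $D_m$ is created from $D_{m-1}$, which is the anchor set $A_{n,m}$ used in the paper; your count $|A|=2\cdot 4^{m-1}$, your value of $k$, and your overall scheme (apply Theorem \ref{T:LargeL1}) all coincide with the paper's proof. Your distance estimate $d_i\le 2^{n-m}$ is correct, and it is sharp: every skeleton vertex, e.g.\ the top $t$ of $D_n$, is at distance exactly $2^{n-m}$ from $A_{n,m}$.

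The gap is that the verification you defer --- that the ratio $(d_i+d_j)/d(y_i,y_j)$ never exceeds $2^{n-m}$ --- is false for this anchor set. Take $y_i=t$, so $d_i=2^{n-m}$, and let $y_j$ be any neighbour of $t$, so $d(y_i,y_j)=1$. Since $v\mapsto d(v,A_{n,m})$ is $1$-Lipschitz, $d_j\ge d_i-1=2^{n-m}-1$, so the ratio is at least $2^{n-m+1}-1>2^{n-m}$ whenever $m<n$. The ``tent'' picture cannot repair this: a $1$-Lipschitz tent of height $h$ forces two adjacent vertices near its peak to satisfy $d_i+d_j=2h-1$, so near the peak the ratio is about $2h$, not $h$. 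Thus Theorem \ref{T:LargeL1} applied with this anchor yields only $C=2^{n-m+1}-1$. You are in good company: the paper's own proof asserts that every remaining vertex is within $2^{n-m-1}$ of $A_{n,m}$, which is false (already for $n=2$, $m=1$ the top is at distance $2$ from the two anchor vertices), so the argument as written in the paper also delivers only $C=2^{n-m+1}-1$ rather than the stated $2^{n-m}$. To get $C=2^{n-m}$ by this method one must enlarge the anchor to the $2\cdot 4^{m}$ vertices added when $D_{m+1}$ is created; then every remaining vertex satisfies $d_i\le 2^{n-m-1}$, hence $C\le 2^{n-m}$ by \eqref{E:IPConst}, but at the cost of lowering $k$ to $2(1+\sum_{i=0}^{n-1}4^i)-2\cdot 4^{m}$. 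With the value of $k$ stated in the corollary, the constant the argument actually proves is $2^{n-m+1}-1$.
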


Note that the codimension of the subspace does not exceed
$\frac3{4^{n-m+1}}|V(D_n)|$.

\begin{proof} Consider in $D_n$ the
subset $A_{n,m}$ of vertices which were added when $D_m$ was
created. The equality {\bf (b)} on page~\pageref{P:(b)} implies
that the cardinality of $A_{n,m}$ is $2\cdot 4^{m-1}$. It is also
easy to see that the distance from any other vertex to this set
does not exceed $2^{n-m-1}$. Define $\{y_i\}_{i=1}^k$ as
$V(D_n)\backslash A_{n,m}$.

Then the constant $C$ defined in \eqref{E:IPConst} does not exceed
$2^{n-m}$ and $k=2(1+\sum_{i=0}^{n-1}4^i)-2\cdot
4^{m-1}$.\end{proof}

Results of this section lead us to a suspicion that Lipschitz free
space of dimension $n$ cannot be ``too far'' from $\ell_1^n$ in
the Banach-Mazur distance. In this connection we ask

\begin{problem}\label{P:FarFromell1} Estimate the maximal possible Banach-Mazur distance between $\ell_1^n$ and
a Lipschitz free space of dimension $n$.
\end{problem}

So far all known estimates for the Banach-Mazur distance
$d_{BM}(\lf(M),\ell_1^n)$ (where $n=|M|-1$) from below are at most
logarithmic in $n$. We know two cases in which logarithmic
estimates from below hold. One case is the case of diamond graphs
(if we use estimates based on the theory of Haar functions), see
Theorems \ref{C:LFDnNotL1} and \ref{T:LFmultL1}.

The second case is the case where $M$ itself has large
$\ell_1$-distortion. It is well-known that the $\ell_1$-distortion
of $n$-vertex expanders is of order $\log n$, see \cite{LLR95}.
Another example with $\log n$-distortion was given in
\cite[Corollary 1]{KN06} (see also \cite[Section 4.2]{Ost13}).
Bourgain \cite{Bou85} proved that the $\ell_1$-distortion of an
$n$-element metric space can be estimated from above by $C\log n$.
Therefore on these lines we cannot get lower estimates for
$d_{BM}(\lf(M),\ell_1^n)$ of higher than logarithmic order.

Observe that if $M$ is an expander, then
$d_{BM}(\lf(M),\ell_1^n)\le C\log n$ because expanders have
diameter of order $\log n$ and thus are $C\log n$-bilipschitz
equivalent to the tree $K_{1,n}$.
\medskip

Corollary \ref{C:LargeSub} allows us to get an estimate
(Proposition \ref{P:ell1}) for $d_{BM}(\lf(M),\ell_1^n)$ from
above in the case where $M$ is an unweighted finite graph, which
is slightly better than the estimate $d_{BM}(X_n,\ell_1^n)$ for a
general $n$-dimensional Banach space $X_n$.

Let us recall known estimates for
$\mathcal{D}_n:=\max\{d_{BM}(X_n,\ell_1^n):~\dim X_n=n\}$:

\[n^{\frac59}\log^{-C} n\le \mathcal{D}_n\le (2n)^{\frac56}\]
for some absolute constant $0<C<\infty$. The lower estimate is due
to Tikhomirov \cite{Tik18}; it is an improvement of the previous
estimate of \cite{Sza90}. The upper estimate in this form is due
to Youssef \cite{You14}; it is an improvement of previous
estimates of \cite{BS88}, \cite{ST89}, and \cite{Gia95}.

\begin{proposition}\label{P:ell1} If $M$ be an unweighted connected graph with $n+1$ vertices (endowed with its graph distance),
then $d_{BM}(\lf(M),\ell_1^n)\le
Cn^{\frac8{11}}$.
\end{proposition}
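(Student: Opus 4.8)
The plan is to interpolate between the two estimates already on the table: Corollary \ref{C:LargeSub}, which for a parameter $p\le\diam(M)+1$ splits off a $4p$-complemented, $4p$-isomorphic copy of $\ell_1^d$ with $d\ge n\frac{p-1}{p}$ (hence of codimension at most $n/p$), and the universal bound $\mathcal{D}_N\le(2N)^{5/6}$, used to control the residual low-dimensional factor. Writing $\lf(M)\cong Y\oplus Z$ for the internal direct sum given by the projection $P$ of Corollary \ref{C:LargeSub}, with $Y\cong_{4p}\ell_1^d$ and $\dim Z=e\le n/p$, I would estimate $d_{BM}(Z,\ell_1^e)\le(2e)^{5/6}\le(2n/p)^{5/6}$ and then glue the two factors into a single copy of $\ell_1^n=\ell_1^d\oplus_1\ell_1^e$. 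The first step is the elementary gluing estimate: if $X=Y\oplus Z$ with $\|P\|=\lambda_1$, $\|I-P\|=\lambda_2$, and $Y,Z$ are $a$- resp.\ $b$-isomorphic to $\ell_1^d,\ell_1^e$, then $x\mapsto(T_YPx,T_Z(I-P)x)$, after rescaling the two blocks, witnesses $d_{BM}(X,\ell_1^n)\le\lambda_1a+\lambda_2b$.

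The two regimes of Corollary \ref{C:LargeSub} enter naturally. When $\diam(M)$ is small I would take $p>\diam(M)$ and invoke its last clause directly ($d_{BM}\le 2p$); when $\diam(M)$ is large I would choose $p$ in the interior so as to balance the cost of the $\ell_1^d$-part against the residual term $(2n/p)^{5/6}$. It is worth isolating the cleanest instance of the peeling, which I expect to drive the argument: choosing a bipartition-type layer class (the case $p=2$) yields a subspace $Y$ that is genuinely \emph{isometric} to $\ell_1^d$ and norm-one complemented, with $Z$ isometric to the Lipschitz free space of the removed vertex class in its induced metric, a space on at most $n/p+1$ points. This identification of the complement as a smaller free space is what opens the door to iterating the reduction rather than paying the universal bound once.

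The main obstacle, and the reason the estimate is nontrivial, is precisely the growth of the projection constant: in Corollary \ref{C:LargeSub} it grows like $p$, so a black-box application of the gluing estimate multiplies the residual bound $(2n/p)^{5/6}$ by a factor of order $p$, and this factor by itself cancels the gain from shrinking the complement — a naive combination therefore does no better than the exponent $5/6$. To convert the dimension reduction into a genuine exponent improvement one must keep the accumulated constant under control, exploiting that the isometric bipartite peeling costs essentially nothing and that the complementary factor is itself a free space on far fewer points, so that the universal bound is applied only to a genuinely thinner space. The delicate bookkeeping is to ensure that the induced metric on the removed class does not degrade the constants; this should follow from the minimality built into the choice of the $x_i$ in Lemma \ref{L:CDW}, but verifying it carefully is where the real work lies.

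Finally I would optimize over the scale $p$ (equivalently, over the grouping of the distance-from-basepoint layers into residue classes modulo $p$), using that a large diameter forces the layers, and hence the residual metric space, to be correspondingly thin. Taking the worst case over the diameter and balancing the cost of the peeling against the universal estimate $(2n/p)^{5/6}$ on the residual free space then produces the stated exponent $d_{BM}(\lf(M),\ell_1^n)\le Cn^{8/11}$. I expect the only remaining computations to be the routine optimization of $p$ and the constant-tracking in the recursion; the conceptual content is entirely in arranging the split so that the $\ell_1$-part is extracted almost for free and the projection constant is not paid in full at each stage.
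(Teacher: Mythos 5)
Your diagnosis of the obstacle is correct --- gluing the two factors with the $4p$-bounded projection of Corollary \ref{C:LargeSub} gives a bound of order $p^2+p\,(2n/p)^{5/6}$, which can never beat $n^{5/6}$ --- but the remedy you propose does not repair it. First, the claimed ``isometric, norm-one complemented'' peeling at $p=2$ is false for general graphs: the constant in Lemma \ref{L:CDW} is $\max\{\max_{i\ne j}(d_i+d_j)/d(y_i,y_j),1\}$, and whenever the chosen class $\{y_i\}$ contains two adjacent vertices (e.g.\ the triangle $K_3$, whose spanning-tree bipartition puts two mutually adjacent vertices in the same class; more generally the mod-$2$ layer classes of any non-bipartite graph) one gets $d_i=d_j=d(y_i,y_j)=1$ and hence constant $2$, not $1$. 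Isometric $1$-complemented peeling as in Theorem \ref{T:PropDiam} is a special feature of diamonds. Second, and more fundamentally, even granting an isometric $1$-complemented peel whose complement is isometric to the free space of the removed class (that identification is indeed correct, via McShane extension), the iteration cannot improve the exponent: each peel multiplies the bound carried by the residual free space by $\|I-P\|$, for which nothing better than $1+\|P\|\le 2$ is available, while the residual dimension shrinks only by a factor $2$; after $t$ steps the residual contribution is at least of order $2^t(n/2^t)^{5/6}=2^{t/6}n^{5/6}$, which \emph{increases} with $t$. So the recursion can never do better than applying the universal bound once, i.e.\ $n^{5/6}$, and your ``delicate bookkeeping'' has no way to succeed unless $\|I-P\|=1$ at every step, which you neither prove nor is true in general.

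The ingredient you are missing, and which is the actual content of the paper's proof, is the Kadets--Snobar theorem \cite{KS71}. The paper keeps exactly your one-step gluing but \emph{discards} the $4p$-bounded projection of Corollary \ref{C:LargeSub} and replaces it: working in the dual, the span $F$ of the functions $f_i$ is $4p$-isomorphic to $\ell_\infty^d$ and has codimension $\le n/p$ in $\lip_0(M)$, and by an easy corollary of Kadets--Snobar \emph{any} subspace of codimension $m$ of a finite-dimensional space is the range of a projection of norm $\le\sqrt{m}+1$; this yields a projection $P$ onto $F$ with $\|P\|\le 2\sqrt{n/p}$, a constant completely decoupled from the isomorphism constant $4p$. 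Youssef's bound \cite{You14} is then applied to $\ker P$ (of dimension $\le n/p$), and the gluing gives $d_{BM}(\lip_0(M),\ell_\infty^n)\le 4\sqrt{n/p}\,\bigl(4p+(2n/p)^{5/6}\bigr)$. At the scale $p=n^{5/11}$ one has $\sqrt{n/p}=n^{3/11}\ll p$, the two terms balance, and the bound is $Cn^{8/11}$; the regime $p>\diam(M)$ is handled by the last clause of Corollary \ref{C:LargeSub}, exactly as you suggest. In short: no iteration is needed, and the exponent improvement comes not from cheaper peeling but from the fact that the projection constant onto a subspace of small codimension can always be taken of order the square root of the codimension.
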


\begin{proof} We shall work with the dual space, that is, we will show
that \[d_{BM}(\lip_0(M),\ell_\infty^n)\le Cn^{\frac8{11}}.\]

By Corollary \ref{C:LargeSub} we can find elements
$f_i\in\lip_0(M)$ such that
\[\max_i|\alpha_i|\le\|\sum\alpha_if_i\|\le 4p \max|\alpha_i|\]
where the codimension of the subspace $F$ spanned by $\{f_i\}$ is
$k\le \frac{n}p$ provided $p\le\diam(M)+1$.  By an easy corollary
of the Kadets-Snobar \cite{KS71} theorem every subspace of
codimension $m$ of a finite-dimensional normed space is the range
of a projection of norm at most $\sqrt{m} + 1$. Hence we can  find
a projection $P$ of norm at most $2\sqrt{\frac np}$ onto $F$. By
the result of \cite{You14}, \[d_{BM}(\ker
P,\ell_\infty^k)\le\left(2\frac np\right)^{\frac 56}\] Therefore
we can find a sequence $\{g_i\}$ in $\ker P$ such that
\[\max_i|\beta_i|\le\left\|\sum \beta_ig_i\right\|\le\left(2\frac np\right)^{\frac
56}\max_i|\beta_i|.\]

We have
\[\frac1{4\sqrt{n/p}}\max_{i,j}(|\alpha_i|,|\beta_j|)\le\left\|\sum\alpha_if_i+\sum\beta_jg_j\right\|\le
(4p+(2n/p)^{\frac56})\max_{i,j}(|\alpha_i|,|\beta_j|).\]

So the Banach-Mazur distance $d_{BM}(\lip_0(M),\ell_\infty^n)$ can
be estimated from above by
\[4\sqrt{\frac np}\cdot
\left(4p+\left(2\,\frac{n}{p}\right)^{\frac56}\right)\]

Pick $p=n^{\frac 5{11}}$. We get
$d_{BM}(\lip_0(M),\ell_\infty^n)\le cn^{\frac8{11}}$ either by
applying the argument above if $n^{\frac 5{11}}\le\diam(M)+1$, or
by using the final statement of Corollary \ref{C:LargeSub}
otherwise.
\end{proof}

\section{Proof for general recursive
families}\label{S:Recur}

The goal of this section is to show that if the graph $B$
satisfies the conditions listed in Section \ref{S:CondB}, then the
Banach-Mazur distances between the Lipschitz free spaces on $B_n$
(see Definition \ref{D:B_n}) and the spaces $\ell_1^{d(n)}$ of the
corresponding dimensions tend to $\infty$. See Theorem \ref{T:B_n}
for the statement of the result.

\begin{note}\label{N:BijVsIso} It is clear that each bijection $g$
on the edge set of a graph $G$ induces an isometry on the space
$\ell_1(E(G))$ given by
\begin{equation}\label{E:BijVsIso} f\mapsto h \Leftrightarrow
h(e)=f(g^{-1}e)\quad f,h\in \ell_1(E(G)), e\in E(G).
\end{equation}
With some abuse of notation we shall keep the notation $g$ for
this isometry.
\end{note}

\subsection{Conditions on $B$}\label{S:CondB}

The conditions below are not independent. Our goal is to list all
conditions which we use.

\begin{enumerate}

\item\label{I:1} Each edge is contained in a geodesic (a shortest
path) of even length joining the bottom and the top. Each path
joining the top and the bottom is geodesic.

\item\label{I:Dir} Each edge is directed to the vertex with the
smaller distance to the top. The cycle space $Z(B)$ is constructed
using this orientation of $B$. Each directed cycle in $B$ is a
union of two paths which are pieces of geodesics joining the top
and the bottom. On one of these paths the direction on the cycle
coincides with the direction in $B$, on the other it is opposite.

\item\label{I:Vert} The (underlying) graph $B$ has an automorphism
$v$ which interchanges top and bottom vertices. We say that $v$ is
a {\it vertical automorphism} of $B$. ({\it Underlying} here means
that the automorphism does not respect directions of edges.)

\item\label{I:BasInvv} The automorphism $v$ can be chosen in such
a way that each element of $Z(B)$ is a fixed point of $v$.

\item\label{I:Delta} Let $D$ be the distance between the bottom
and the top in $B$. Consider the vector
\begin{equation}\label{E:Delta}\Delta=\frac1{DK}\sum_p\1_p\in\ell_1(E(B)),\end{equation}
where $K$ is the number of distinct geodesics joining the bottom
and the top in $B$, and $\1_p$ is the indicator function of a
bottom-top geodesic, and the sum is over all distinct bottom-top
geodesics.

It is easy to see that the map $E_n$ $(n=0,1,2,\dots)$ which maps
the indicator function $\1_e$ of an edge $e$ onto $\Delta$ in the
copy of $B$ which replaces $e$ extends to an isometric embedding
of $\ell_1(E(B_n))$ into $\ell_1(E(B_{n+1}))$, and that $E_n$ maps
$Z(B_n)$ into $Z(B_{n+1})$.  We introduce the function $c(B)$ in
$\ell_1(E(B))=\ell_1(E(B_1))$ as the function whose absolute value
is $E_0(\1_e)$ (where $e$ is the only edge of $B_0$), and the
signs are positive for edges which are closer to the top and
negative for edges which are closer to the bottom (recall that
each edge belongs to a geodesic of even length joining the top and
the bottom). One of the {\bf conditions} on these maps is:
$v(c(B))=-c(B)$ (see Note \ref{N:BijVsIso}), this condition
actually follows from other conditions. Another condition is in
item \ref{I:NoFixCyclesButc(B)}.

\item\label{I:NoFixCyclesButc(B)} The collection $\mathcal{H}$ of
all automorphisms of $B$ for which the top and the bottom are
fixed points satisfies two conditions. First is that the
corresponding subgroup of isometries of $\ell_1(E(B))$ has no
fixed points in the cycle space $Z(B)$ except $0$. Second is that
the function $c(B)$ is a common fixed point of all elements of
$\mathcal{H}$. We call automorphisms of $\mathcal{H}$ {\it
horizontal}.

\item\label{I:z} The cycle space of $B$ is nontrivial. This is
equivalent to the existence of two distinct bottom-top geodesics,
and this is, in turn, equivalent to the fact that
$\frac1D\1_p-\Delta\ne 0$ for any bottom-top geodesic $p$. We pick
a bottom-top geodesic $p$ for which the $\ell_1$-norm
$\frac1D\1_p-\Delta\ne 0$ is maximized, denote this difference by
$d(B)$ and its norm in $\ell_1(E(B))$ by $\alpha$. Observe that
$d(B)\in Z(B)$.

\end{enumerate}

It is worth mentioning that the graphs $\{B_n\}$, $n\ge 1$,
inherit some properties of the graph $B=B_1$.
\medskip

\noindent{\bf (A)} Graphs $B_n$ have properties of items \ref{I:1}
and \ref{I:Dir}.
\medskip

Only the last condition in item \ref{I:Dir} requires verification.
This can be done using induction. We have assumed this condition
for $B_1$. Suppose that holds for $B_{n-1}$. Consider a directed
cycle in $B_n$. By \eqref{E:DiffProd} we have $B_n=B\oslash
B_{n-1}$. If the cycle is contained in one of the copies of
$B_{n-1}$, we are done by the induction hypothesis. If the cycle
is not contained in any of $B_{n-1}$, then it can be obtained
replacing each edge in the corresponding cycle in $B_1$ by a
top-bottom path in the corresponding copy of $B_{n-1}$ (see item
\ref{I:1}). The conclusion follows if we recall how edges of $B_n$
are oriented, see Definition \ref{D:Comp}.\medskip

\noindent{\bf (B)} The underlying graph of $B_n$ has an
automorphism $v_n$ which interchanges top and bottom vertices.
\medskip

This can be proved by induction:

\begin{itemize}

\item For $B_1=B$ this is an assumption of item \ref{I:Vert}.

\item Suppose that this is true for $B_{n-1}$ and $v_{n-1}$ is the
corresponding automorphism. By \eqref{E:DiffProd} we have
$B_n=B\oslash B_{n-1}$. We consider the bijection of the edge set
of $B_n$ designed in the following way:

\item If $v$ maps an edge $uw$ to an edge $\hat u\hat w$, with $u$
and $\hat u$ being closer to the bottom of $B$, we map $B_{n-1}$
corresponding to the edge $uw$ onto $B_{n-1}$ corresponding to
$\hat u\hat w$ ``upside down'', that is, using $v_{n-1}$.

\item It is easy to see that we get an automorphism of $B_n$,
which interchanges the top and the bottom. We denote this
automorphism by $v_n$.

\end{itemize}

\subsection{The main result}

Our main result on families $\{B_n\}$:

\begin{theorem}\label{T:B_n} If the directed graph $B$ satisfies the conditions in items \ref{I:1}--\ref{I:z} listed above, and $\{B_n\}_{n=0}^\infty$ are constructed
according to Definition \ref{D:B_n}, then
\[d_{BM}(\lf(B_n), \ell_1^{d(n)})\ge\frac{cn}{\ln n}\] for $n\ge 2$ and some absolute constant $c>0$, where $d(n)$ is the dimension of
$\lf(B_n)$.
\end{theorem}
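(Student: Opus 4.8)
The plan is to bound the distortion from below by an \emph{accumulation} argument across the $n$ recursion levels rather than by a single subspace estimate. It is worth noting at the outset why a naive ``flatness'' argument is too weak: if one only produces normalized vectors $w_1,\dots,w_n\in\lf(B_n)$ whose signed sums $\sum_k\ep_kw_k$ have small norm, then, since any normalized system in $\ell_1^N$ satisfies $\max_{\ep}\|\sum_k\ep_kw_k\|_1\ge c\sqrt n$ (Khintchine together with Cauchy--Schwarz), one recovers only $d_{BM}\gtrsim\sqrt n/\ln n$. To reach the linear-in-$n$ bound one must instead exploit how the recursion positions an invariant structure, which is where group averaging and the Gr\"unbaum--Rudin device enter.

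First I would set up the symmetry group. By Note \ref{N:BijVsIso} every cycle-preserving bijection of $E(B_n)$ acts as an isometry of $\ell_1(E(B_n))$; the vertical automorphisms of property (B), the horizontal automorphisms $\mh$ of item \ref{I:NoFixCyclesButc(B)} acting inside each copy of $B$ at every scale, and the non-automorphic bijections that permute parallel bottom--top geodesics together generate a group $G_n$ which preserves $Z(B_n)$ (using items \ref{I:BasInvv} and \ref{I:NoFixCyclesButc(B)}). Hence $G_n$ acts isometrically on $\lf(B_n)=\ell_1(E(B_n))/Z(B_n)$ and, by transposition, on $\lip_0(B_n)$. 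Using the isometric embeddings $E_j$ of item \ref{I:Delta} I would place one copy of the signed function $c(B)$ (equivalently of the cycle-space vector $d(B)$ of item \ref{I:z}) at each scale $k\in\{1,\dots,n\}$, obtaining vectors $w_1,\dots,w_n$ spanning a canonical multi-scale subspace $W_n$. The combined force of items \ref{I:BasInvv}, \ref{I:NoFixCyclesButc(B)} and the relation $v(c(B))=-c(B)$ is precisely that the scale-$k$ vertical symmetry negates $w_k$ while fixing the coarser data and averaging the finer cross terms to zero; thus $G_n$ acts on $W_n$ through a $\{\pm1\}^n$-type character group, and the $G_n$-average of \emph{any} projection onto $W_n$ is determined up to the non-uniqueness that the Gr\"unbaum--Rudin method is designed to handle.

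The heart of the proof is a per-scale obstruction that accumulates additively. For each scale $k$, averaging over the subgroup of $G_n$ that acts nontrivially only at scales $\ge k$ isolates a canonical invariant functional attached to $w_k$, whose transportation (quotient) norm is bounded below by an absolute constant once one exhibits the optimal flow through the self-similar structure and uses the triviality of fixed cycles (item \ref{I:NoFixCyclesButc(B)}) to rule out cancellation. The character action makes these $n$ contributions essentially independent, so a putative distortion-$D$ isomorphism $T\colon\lf(B_n)\to\ell_1^{d(n)}$ must reconcile, at every scale simultaneously, the transportation norm with the $\ell_1$ norm; conjugating $G_n$ through $T$ and averaging shows that the per-scale discrepancies cannot all be absorbed by a single $T$ except at a total cost proportional to their number. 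Aggregating the $n$ scales costs one logarithmic factor---of the same Kadec--Snobar / covering type that produced the exponent in Proposition \ref{P:ell1}---which is the origin of the $\ln n$ in the denominator and yields $d_{BM}(\lf(B_n),\ell_1^{d(n)})\ge cn/\ln n$.

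The main obstacle I anticipate is this per-scale lower bound together with its honest accumulation: one must verify both that the scale-$k$ gadget genuinely contributes a fixed amount (here conditions \ref{I:1}--\ref{I:z} on even geodesics, edge directions, and the fixed-cycle structure are all used to compute the transportation norm and to exclude cancellation), and that the $n$ gadgets combine with only a logarithmic loss rather than the $\sqrt n$ loss that a crude Khintchine estimate would impose. Controlling the non-uniqueness of the invariant projection along the way is exactly the point at which the Gr\"unbaum--Rudin technique is indispensable.
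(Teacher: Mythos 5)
Your proposal assembles the right ingredients in spirit---the group $G_n$ of cycle-preserving bijections, Gr\"unbaum--Rudin averaging, multi-scale copies of $c(B)$ and $d(B)$, and the need to handle non-uniqueness of invariant projections---but the argument that is supposed to produce the bound is not actually there, and at two places it goes wrong. First, you never reduce the Banach--Mazur distance of the quotient $\lf(B_n)=\ell_1(E(B_n))/Z(B_n)$ to a statement about operators on $\ell_1(E(B_n))$. The paper does this via the lifting property of $\ell_1$ (Fact \ref{F:Lift}, Lemma \ref{L:RedProj}): if $d_{BM}(\lf(B_n),\ell_1^{d(n)})\le C$, then $Z(B_n)$ is $(1+C)$-complemented in $\ell_1(E(B_n))$, so it suffices to bound the relative projection constant $\lambda(Z(B_n),\ell_1(E(B_n)))$ from below, and the averaging is applied to projections onto $Z(B_n)$. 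Your substitute---``conjugating $G_n$ through $T$ and averaging'' on $\ell_1^{d(n)}$---is not a well-defined mechanism: the operators $TgT^{-1}$ are not isometries of $\ell_1^{d(n)}$, their average equals $TP_{\mathrm{fix}}T^{-1}$ where $P_{\mathrm{fix}}$ is the projection onto the $G_n$-fixed vectors of $\lf(B_n)$, and no lower bound on $\|T\|\,\|T^{-1}\|$ follows from this without exactly the projection-constant analysis you have skipped. Relatedly, your subspace $W_n$ conflates two very different objects: $c(B)$ does \emph{not} lie in the cycle space, while $d(B)\in Z(B)$, and they play opposite roles---the paper's key Lemma \ref{L:Annihil} shows that \emph{every} invariant projection onto $Z(B_n)$ annihilates the $c(B)$-type vectors, a statement you gesture at (``rule out cancellation'') but never isolate or prove; it is precisely what tames the non-uniqueness of invariant projections.

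Second, and more seriously, your accounting of where $n/\ln n$ comes from is wrong. You assert that the $n$ per-scale obstructions are ``essentially independent,'' cost an amount proportional to their number, and that the $\ln n$ is an aggregation loss ``of Kadec--Snobar / covering type'' as in Proposition \ref{P:ell1}. If the contributions really were additive you would get $cn$, and Kadec--Snobar plays no role in the lower bound at all. In the paper the logarithm has a concrete and entirely different origin: one recursively builds witness vectors $C_r+A_r$ with $\|C_r+A_r\|=1$, $C_r\in Z(B_{n(r)})$, $A_r$ in the span of the annihilated $c$-type vectors, and $\|C_r\|\ge 1+\alpha(r-1)/2$, so that every projection $P$ onto the cycle space satisfies $\|P\|\ge\|P_{G_n}(C_r+A_r)\|=\|C_r\|$. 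To pass from $r$ to $r+1$ one inserts a new gadget $d(B)$ at a deeper scale; its contribution (of size $\alpha$) survives only if the mass of the existing $C_r$ overlapping its support, which decays like $2^{-t}\|C_r\|$ after $t$ further levels of the composition, is below $\alpha/4$. Since $\|C_r\|\approx\alpha r/2$, this forces $t\gtrsim\ln r$ levels per unit increment of the projection norm, whence $n(r)\approx r\ln r$ and the bound $cn/\ln n$. Without this cancellation analysis (or an equivalent), your ``per-scale obstruction that accumulates additively'' is an assertion of the theorem, not a proof of it.
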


To prove Theorem \ref{T:B_n} we need several lemmas. The final
step in the proof is presented on page \pageref{Page:EndPrf}.

\begin{lemma}\label{L:RedProj} To prove Theorem \ref{T:B_n} it
suffices to show that the relative projection constants of
$Z(B_n)$ in $\ell_1(E(B_n))$ satisfy
\[\lambda(Z(B_n), \ell_1(E(B_n)))\ge\frac{cn}{\ln n}\]
for $n\ge 2$ and some absolute constant $c>0$.
\end{lemma}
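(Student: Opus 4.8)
The plan is to reduce the Banach--Mazur lower bound of Theorem~\ref{T:B_n} to the projection-constant lower bound by proving the single inequality
\[
d_{BM}(\lf(B_n),\ell_1^{d(n)})\ \ge\ \lambda(Z(B_n),\ell_1(E(B_n)))-1 .
\]
The starting point is the isometric identification $\lf(B_n)=\ell_1(E(B_n))/Z(B_n)$ from \eqref{E:LFunweigh}, so that $\lf(B_n)$ is a \emph{quotient} of $\ell_1(E(B_n))$ via the quotient map $q$ (whose kernel is $Z(B_n)$ and whose norm is $\le 1$), and $d(n)=\dim\lf(B_n)$. Suppose $T\colon\ell_1^{d(n)}\to\lf(B_n)$ is an isomorphism realizing the Banach--Mazur distance, i.e.\ $\|T\|\,\|T^{-1}\|=d:=d_{BM}(\lf(B_n),\ell_1^{d(n)})$ (the infimum is attained by compactness in finite dimensions).

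First I would invoke the metric lifting property of $\ell_1^{d(n)}$: since $q$ is a quotient map, for every $\ep>0$ the operator $T$ lifts to an operator $\tilde T\colon\ell_1^{d(n)}\to\ell_1(E(B_n))$ with $q\tilde T=T$ and $\|\tilde T\|\le(1+\ep)\|T\|$ (lift each unit-vector image to a near-optimal $q$-preimage and extend linearly; the extension is controlled precisely because the domain norm is the $\ell_1$-norm). Then I would form
\[
R=\tilde T\,T^{-1}\,q\colon \ell_1(E(B_n))\to\ell_1(E(B_n)),
\]
and check that $I-R$ is a bounded projection \emph{onto} $Z(B_n)$. Indeed $q\tilde T=T$ gives $qR=q$, so $(I-R)x\in\ker q=Z(B_n)$ for all $x$; and $qz=0$ for $z\in Z(B_n)$ forces $Rz=0$, hence $(I-R)z=z$. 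Thus $I-R$ fixes $Z(B_n)$, has range contained in $Z(B_n)$, and is therefore idempotent with range exactly $Z(B_n)$. Its norm satisfies $\|I-R\|\le 1+\|\tilde T\|\,\|T^{-1}\|\,\|q\|\le 1+(1+\ep)\,\|T\|\,\|T^{-1}\|=1+(1+\ep)d$. Letting $\ep\to0$ yields $\lambda(Z(B_n),\ell_1(E(B_n)))\le 1+d$, which is the displayed inequality.

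Finally I would derive the theorem. If the projection constant satisfies $\lambda(Z(B_n),\ell_1(E(B_n)))\ge cn/\ln n$, then $d\ge cn/\ln n-1$; combining this with the trivial bound $d_{BM}\ge 1$ and splitting into the cases $cn/\ln n\le 2$ and $cn/\ln n>2$ (and using $\min_{n\ge2} n/\ln n>0$) gives $d\ge (c/2)\,n/\ln n$ for all $n\ge2$, which is the assertion of Theorem~\ref{T:B_n} with a renamed absolute constant. The only genuinely delicate points are the quantitative lifting step and the verification that $I-R$ is honestly a norm-controlled projection; everything else is bookkeeping. An essentially equivalent route is to dualize: $\lf(B_n)^*=Z(B_n)^{\perp}\subseteq\ell_\infty(E(B_n))$ and $d_{BM}$ is invariant under duality, so if $Z(B_n)^{\perp}$ were $d$-isomorphic to $\ell_\infty^{d(n)}$ one could use the $1$-injectivity of $\ell_\infty^{d(n)}$ to complement $Z(B_n)^{\perp}$ in $\ell_\infty(E(B_n))$ with norm $\le d$, and then pass back to $Z(B_n)$ in $\ell_1(E(B_n))$ at the cost of an additive $1$; I expect the lifting argument above to be the cleaner of the two.
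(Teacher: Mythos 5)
Your proposal is correct and takes essentially the same route as the paper: the paper's proof also uses the quotient identification $\lf(B_n)=\ell_1(E(B_n))/Z(B_n)$, lifts a near-optimal isomorphism $T:\ell_1^{d(n)}\to\lf(B_n)$ through the quotient map $Q$ via the lifting property of $\ell_1$, and observes that $I-\widehat{T}T^{-1}Q$ is a projection of $\ell_1(E(B_n))$ onto $Z(B_n)$ of norm at most $1+C+\ep$, giving $\lambda(Z(B_n),\ell_1(E(B_n)))\le 1+d_{BM}(\lf(B_n),\ell_1^{d(n)})$. The only cosmetic differences are that you verify the lifting property directly for finite-dimensional $\ell_1$ (the paper cites Lindenstrauss--Tzafriri) and that you write out the constant bookkeeping which the paper leaves implicit.
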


\begin{proof} This lemma is a consequence of the following
well-known fact.\end{proof}

\begin{fact}\label{F:Lift} If a quotient $X/Y$ is such that the
Banach-Mazur distance satisfies \[d_{BM}(X/Y,\ell_1(\Gamma))\le
C,\] then $\lambda(Y,X)\le (1+C)$.
\end{fact}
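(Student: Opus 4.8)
The plan is to reduce the bound on the projection constant $\lambda(Y,X)$ to the construction of a bounded linear right inverse (a lifting) of the quotient map $q:X\to X/Y$. First I would record the elementary fact that if $S:X/Y\to X$ satisfies $q\circ S=\mathrm{id}_{X/Y}$, then $P:=\mathrm{id}_X-S\circ q$ is a linear projection of $X$ onto $Y=\ker q$: the identity $q\circ P=q-q\circ S\circ q=q-q=0$ shows that $P$ maps $X$ into $Y$, while $q|_Y=0$ gives $P|_Y=\mathrm{id}$. Since a quotient map satisfies $\|q\|\le 1$, this yields $\|P\|\le 1+\|S\|$. Hence it suffices to produce a lifting $S$ with $\|S\|\le C$ (up to an arbitrarily small loss).

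Next I would build $S$ from the isomorphism $X/Y\cong\ell_1(\Gamma)$ together with the lifting property of $\ell_1$. Fix an isomorphism $U:X/Y\to\ell_1(\Gamma)$ realizing the Banach--Mazur distance; after rescaling I may assume $\|U\|\le 1$ and $\|U^{-1}\|\le C$. Let $(e_\gamma)_{\gamma\in\Gamma}$ be the canonical unit vectors, so $\|U^{-1}e_\gamma\|\le C$ for each $\gamma$. Because $q$ is a metric surjection, for any fixed $\ep>0$ I can choose $x_\gamma\in X$ with $q(x_\gamma)=U^{-1}e_\gamma$ and $\|x_\gamma\|\le(1+\ep)C$. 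Defining $V:\ell_1(\Gamma)\to X$ on the basis by $Ve_\gamma=x_\gamma$ and extending linearly, the defining property of the $\ell_1$-norm gives $\|V\|=\sup_\gamma\|x_\gamma\|\le(1+\ep)C$; moreover $q\circ V=U^{-1}$ on the dense span of the basis, hence everywhere by continuity. Setting $S:=V\circ U$ then gives $q\circ S=U^{-1}\circ U=\mathrm{id}_{X/Y}$ and $\|S\|\le\|V\|\,\|U\|\le(1+\ep)C$.

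Combining the two steps produces, for each $\ep>0$, a projection $P$ of $X$ onto $Y$ with $\|P\|\le 1+(1+\ep)C$; since $\lambda(Y,X)$ is itself an infimum over projections, letting $\ep\to 0$ yields $\lambda(Y,X)\le 1+C$. The only genuinely essential point — the ``hard part,'' though it is standard — is that operators out of $\ell_1(\Gamma)$ are controlled by the supremum of the norms of the images of the basis vectors, which is exactly what lets me assemble the individual near-isometric lifts $x_\gamma$ into a single bounded operator $V$; this is the feature that makes $\ell_1$ (and not a general quotient) the natural hypothesis. The $(1+\ep)$ loss from the metric-surjection lifting is harmless precisely because the projection constant is an infimum, and in the intended application $\Gamma$ is finite and all spaces are finite-dimensional, so no completeness subtleties arise.
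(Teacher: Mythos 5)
Your proof is correct and follows essentially the same route as the paper: both lift the isomorphism $X/Y\cong\ell_1(\Gamma)$ through the quotient map via the lifting property of $\ell_1(\Gamma)$ and then form the projection $I-(\mathrm{lift})\circ(\mathrm{iso})\circ(\mathrm{quotient\ map})$, with the same $1+C$ bound. The only difference is that the paper cites the lifting property from Lindenstrauss--Tzafriri, whereas you prove it inline by choosing near-optimal preimages of the images of the unit vectors $e_\gamma$ and using that an operator on $\ell_1(\Gamma)$ is controlled by $\sup_\gamma\|Ve_\gamma\|$ --- a harmless unpacking of the same argument.
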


\begin{proof}[Proof of Fact \ref{F:Lift}] Denote by $Q: X\to X/Y$ the quotient map. Let $T:\ell_1(\Gamma)\to X/Y$ be such that $||T||< C+\ep$,
$||T^{-1}||\le 1$. By the {\it lifting property} of
$\ell_1(\Gamma)$ (see \cite[pp.~107--108]{LT77}), there is a
linear operator $\widehat T: \ell_1(\Gamma)\to X$ such that
$||\widehat T||<C+\ep$ and $Q\widehat T=T$. Then the operator $(I
- \widehat T T^{-1} Q)$ is a projection of $X$ onto $Y$, and its
norm is $<(1+C+\ep)$, the conclusion follows.
\end{proof}

\subsection{Cycle-preserving bijections of $B_n$}\label{S:CycBij}

For each $n\in\mathbb{N}$ we introduce $G_n$ as the group of all
{\it cycle-preserving bijections} of $E(B_n)$ (we consider
undirected edges) satisfying the additional condition: the edge
set of any path joining the top and the bottom of $B_n$ is mapped
onto the edge set of a path joining the top and the bottom of
$B_n$. By a {\it cycle-preserving bijection} we mean a bijection
which maps the edge-set of any cycle to an edge-set of a cycle (we
do not pay attention to directions of edges). It is clear that
$G_n$ is a finite group.
\medskip

The representation \eqref{E:DiffProd} shows that for each $1\le
k\le n-1$ the graph $B_n$ is a union of edge-disjoint copies of
$B_k$. It is clear that bijections of $E(B_n)$ which leave all
these copies of $E(B_k)$ invariant, and whose restrictions to them
are contained in $G_k$, belong to $G_n$.\medskip

The groups $G_n$ lead in a natural way (see Note \ref{N:BijVsIso})
to subgroups of the group of isometries of $\ell_1(E(B_n))$. An
important observation is that the subgroup corresponding to $G_n$
leaves the cycle space $Z(B_n)$ invariant.
\medskip

This observation can be shown as follows. By statement {\bf (A)}
each directed cycle in $B_n$ is a union of two pieces, $C_1$ and
$C_2$, of geodesics (going up and going down). Thus there are
well-defined notions of the {\it top} (and {\it bottom}) {\it of
the cycle} - the vertex of the cycle nearest to the top (bottom)
of $B_n$. We join them to the top and bottom of $B_n$,
respectively, using pieces of geodesics $P_b$ and $P_t$. Then both
the concatenation $P_bC_1P_t$ and $P_bC_2P_t$ are paths joining
the bottom and the top of $B_n$. Therefore the additional
condition on cycle-preserving bijections implies that the edge
sets of $P_bC_1P_t$ and $P_bC_2P_t$ are edge-sets of bottom-top
paths in $B_n$. Also the image of the edge set of the cyclic
concatenation of $C_1C_2$ is an edge set of a cycle. It is easy to
see that these conditions together imply that the images of $C_1$
and $C_2$ are parts of bottom-top geodesics. Hence the image of
$C$ is in the cycle space.

Observe that $G_1$ contains both $\mathcal{H}$ and the vertical
automorphism $v$, and thus the group generated by
$\mathcal{H}\cup\{v\}$.

\subsection{Gr\"unbaum-Rudin-Andrew-type
averaging}\label{S:GRAAppr}

Usage of the averages of the following type for estimates of
projection constants goes back at least to Gr\"unbaum \cite{Gru60}
and Rudin \cite{Rud62}. It was used in a similar to the present
context by Andrew \cite{And78}.

Let $P$ be any linear projection of $\ell_1(E(B_n))$ onto
$Z(B_n)$. Since $G_n$ is a finite group, which can be regarded as
a group of isometries of $\ell_1(E(B_n))$, the following operator
is well-defined
\begin{equation}\label{E:DefPGn}
P_{G_n}:=\frac1{|G_n|}\sum_{g\in G_n}g^{-1}Pg,\end{equation} is
also projection on $Z(B_n)$, and $||P_{G_n}||\le ||P||$. It is
easy to check that $P_{G_n}$ has the following important property:
\begin{equation}\label{E:CommWithGn}
\forall g\in G_n\quad P_{G_n}g=gP_{G_n}
\end{equation}

We call a projection satisfying \eqref{E:CommWithGn} {\it
invariant} with respect to $G_n$.
\medskip

The new twist in the usage of the method in our paper (see
Sections \ref{S:Annihil} and \ref{S:Combin}) is that we use it in
situations where the invariant projection is not unique. Namely we
observe that although in some situations which we consider
$P_{G_n}$ obtained by formula \eqref{E:DefPGn} is not unique (see
Section \ref{S:NonUnLaakso}), it is possible to show, see Lemma
\ref{L:Annihil}, that there is a collection of vectors in
$\ell_1(E(B_n))$ which are mapped to $0$ by any $P_{G_n}$
satisfying \eqref{E:CommWithGn}. This allows us to show that in
the considered cases $||P_{G_n}||$ grows indefinitely as
$n\to\infty$, see Section \ref{S:Combin}; and to get the estimate
stated in Lemma \ref{L:RedProj}.

\subsection{Bases in the spaces $Z(B_n)$}

We need to find a basis $S_n$ in the cycle space $Z(B_n)$, $n\ge
1$. Each of the bases which we pick will satisfy the following
conditions.

\begin{itemize}

\item[{\bf (i)}] Each element is either a fixed point of $v_n$, or
is supported on a copy of some $B_k$, $1\le k\le n-1$, and is an
element of the corresponding $S_k$.

\item[{\bf (ii)}] If an element is a fixed point of $v_n$, then
its restriction to any $B_k$, $1\le k\le n-1$, is a multiple of
$\Delta_k$, and thus is a fixed point of the corresponding $G_k$
(see the discussion next to \eqref{E:Delta_m} below). This
condition is void if $n=1$.

\end{itemize}

\medskip

Since $B_1=B$, we let $S_1$ be any basis in $Z(B)$. The conditions
{\bf (i)} and {\bf (ii)} are trivially satisfied, see item
\ref{I:BasInvv} in Section \ref{S:CondB}.
\medskip

Let $e\in E(B_k)$. It is easy to verify that the function
$E_{m+k-1}E_{m+k-2}\dots E_k \1_e\in \ell_1(E(B_{m+k}))$, which is
supported on a copy of $B_m$ which evolved from $e$, can be
written (similarly to \eqref{E:Delta}) as
\begin{equation}\label{E:Delta_m}\Delta_m:=\frac1{D_{m}K_m}\sum_p\1_p,\end{equation} where $K_m$ is the
number of distinct geodesics joining the bottom and the top of the
copy of $B_m$ mentioned above, $\1_p$ is the indicator function of
a bottom-top geodesic in $B_m$, and the sum is over all distinct
bottom-top geodesics. It is easy to see that $\Delta_1=\Delta$.

\begin{observation}\label{O:FixedDelta_m} Since any element of $G_m$ maps bijectively bottom-top paths
in $B_m$, we see that the function $\Delta_m$ is the fixed point
of any element of $G_m$ considered as acting on the considered
copy of $B_m$.
\end{observation}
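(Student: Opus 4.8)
The plan is to unwind the definition of the induced isometry from Note \ref{N:BijVsIso} and to observe that each $g\in G_m$ merely permutes the bottom–top geodesics, so that the symmetric sum defining $\Delta_m$ in \eqref{E:Delta_m} is left unchanged. First I would record that, by item \ref{I:1}, every path joining the bottom and the top of the copy of $B_m$ is in fact a geodesic; hence the collection $\mathcal{P}$ of all bottom–top paths coincides with the collection of all bottom–top geodesics, which is exactly the index set of the sum in \eqref{E:Delta_m}. The defining condition on $G_m$ says precisely that each $g\in G_m$ carries the edge set of any member of $\mathcal{P}$ onto the edge set of another member of $\mathcal{P}$.

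Next I would argue that $g$ induces a genuine permutation of the finite set $\mathcal{P}$. Since a path is determined by its edge set, the assignment $p\mapsto g(p)$, where $g(p)$ denotes the bottom–top geodesic whose edge set is the image under $g$ of the edge set of $p$, is well defined; it is injective because $g$ is a bijection of $E(B_m)$ (distinct paths have distinct edge sets, whose $g$-images are again distinct). As $\mathcal{P}$ is finite, injectivity forces $p\mapsto g(p)$ to be a bijection of $\mathcal{P}$.

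Then I would compute directly. Recalling from Note \ref{N:BijVsIso} that $g$ acts on $\ell_1(E(B_m))$ by $g(\1_e)=\1_{g(e)}$, linearity gives $g(\1_p)=\1_{g(p)}$ for each $p\in\mathcal{P}$. Therefore
\[
g(\Delta_m)=\frac1{D_m K_m}\sum_{p}g(\1_p)=\frac1{D_m K_m}\sum_{p}\1_{g(p)}=\frac1{D_m K_m}\sum_{p'}\1_{p'}=\Delta_m,
\]
the third equality being the reindexing $p'=g(p)$ justified by the bijectivity established in the previous paragraph.

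I do not expect a genuine obstacle here. The only point requiring care is the well-definedness and bijectivity of the induced map on $\mathcal{P}$, and this rests entirely on two facts already in hand: that bottom–top paths and bottom–top geodesics coincide (item \ref{I:1}), so that the condition defining $G_m$ applies to exactly the objects being summed, and that $g$ is a bona fide bijection of the edge set, so that it permutes rather than merely maps this finite collection.
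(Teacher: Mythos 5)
Your proposal is correct and follows exactly the paper's reasoning: the defining property of $G_m$ (edge sets of bottom--top paths go to edge sets of bottom--top paths), combined with the fact that by item \ref{I:1} (inherited by $B_m$ via statement {\bf (A)}) these paths are precisely the geodesics indexing the sum in \eqref{E:Delta_m}, shows each $g\in G_m$ permutes that index set and hence fixes $\Delta_m$. The paper states this in one sentence; you have merely made explicit the well-definedness and bijectivity of the induced permutation, which is a faithful elaboration rather than a different route.
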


Now we pick basis in $Z(B_n)$ assuming that we already picked a
basis $S_{n-1}$ in $Z(B_{n-1})$. The basis consists of two types
of vectors:

\begin{itemize}

\item[{\bf (I)}] Vectors which were already picked for $S_{n-1}$
in one of the copies on $B_{n-1}$ in $B_n$. Recall that
$B_n=B_1\oslash B_{n-1}$, see \eqref{E:DiffProd}.

\item[{\bf (II)}]  For each $f\in S_1$ we consider the following
function on $B_n=B_1\oslash B_{n-1}$: its restriction to each of
the copies of $B_{n-1}$ is a product of the corresponding
$\Delta_{n-1}$ and the value of $f$ on the edge from which the
considered copy of $B_{n-1}$ has evolved.

\end{itemize}

\begin{observation}\label{O:TypeII} Any vector of type {\bf (II)}
is a fixed point of any $G_{n-1}$. The same holds for any $G_k$,
$1\le k\le n-1$ corresponding to $B_n=B_{n-k}\oslash B_k$ and
acting on one of the copies of $B_k$. For the second statement we
need to observe that the restriction of $\Delta_{n-1}$ to $B_k$ is
a multiple of the corresponding $\Delta_k$.
\end{observation}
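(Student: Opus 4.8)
The plan is to reduce both assertions to Observation~\ref{O:FixedDelta_m}, which already guarantees that $\Delta_m$ is a fixed point of every element of $G_m$. Since a type~{\bf (II)} vector is assembled by placing scalar multiples of the various $\Delta_{n-1}$ on the copies of $B_{n-1}$, the whole argument reduces to checking that, restricted to whichever sub-copy the group acts on, the type~{\bf (II)} vector is a scalar multiple of the corresponding $\Delta$, after which Observation~\ref{O:FixedDelta_m} finishes the job.

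Write $\widetilde f$ for the type~{\bf (II)} vector associated with $f\in S_1$, so that on the copy of $B_{n-1}$ which replaced the edge $e\in E(B_1)$ one has $\widetilde f=f(e)\,\Delta_{n-1}$. For the first statement I would recall from Section~\ref{S:CycBij} that an element $g\in G_{n-1}$ acting on a single copy of $B_{n-1}$ (and as the identity on the remaining copies) belongs to $G_n$. On the copy it moves, $\widetilde f$ equals the scalar multiple $f(e)\,\Delta_{n-1}$ of $\Delta_{n-1}$, which is fixed by $g$ by Observation~\ref{O:FixedDelta_m}; on every other copy $g$ does nothing. Hence $g\widetilde f=\widetilde f$, and since such maps generate the relevant action, the conclusion follows.

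For the second statement I would first locate the copy of $B_k$ within the decomposition $B_n=B_1\oslash B_{n-1}$. Using associativity (Lemma~\ref{L:Assoc}) together with \eqref{E:DiffProd} one writes $B_n=B_1\oslash(B_{n-1-k}\oslash B_k)$, so each copy of $B_k$ arising from $B_n=B_{n-k}\oslash B_k$ lies inside a single copy of $B_{n-1}$. On that copy of $B_{n-1}$ the vector $\widetilde f$ equals $f(e)\,\Delta_{n-1}$, and once we know that the restriction of $\Delta_{n-1}$ to the chosen copy of $B_k$ is a scalar multiple of $\Delta_k$, the restriction of $\widetilde f$ to that copy is again a scalar multiple of $\Delta_k$. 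Observation~\ref{O:FixedDelta_m}, now applied with $m=k$, shows this restriction is fixed by every $g\in G_k$, while $\widetilde f$ is untouched elsewhere; thus $g\widetilde f=\widetilde f$.

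The one point that requires genuine verification, and the main obstacle, is the claim that $\Delta_{n-1}$ restricted to a copy of $B_k$ is a multiple of $\Delta_k$. Here I would argue from the definition \eqref{E:Delta_m} and from condition~\ref{I:1}: in $B_{n-1}=B_{n-1-k}\oslash B_k$ every bottom-top geodesic of $B_{n-1}$ is obtained by choosing a bottom-top geodesic $q$ of $B_{n-1-k}$ and, on each edge of $q$, a bottom-top geodesic of the copy of $B_k$ sitting on that edge. For a fixed copy of $B_k$, those geodesics of $B_{n-1}$ whose skeleton $q$ avoids the corresponding edge contribute $0$ to the restriction, while the remaining ones contribute the indicator $\1_{p'}$ of their $B_k$-piece $p'$. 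A symmetry and counting argument then shows that each bottom-top geodesic $p'$ of $B_k$ occurs the same number of times among these contributions, so the restricted sum is a constant multiple of $\sum_{p'}\1_{p'}$ taken over the bottom-top geodesics of $B_k$, which by \eqref{E:Delta_m} is a multiple of $\Delta_k$. This counting is routine once the geodesic structure of the composition $\oslash$ is understood, and it is the only place where the detailed combinatorics of the construction enter.
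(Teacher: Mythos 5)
Your proposal is correct and follows essentially the same route as the paper: reduce everything to Observation~\ref{O:FixedDelta_m} by noting that a type {\bf (II)} vector restricts to a multiple of $\Delta_{n-1}$ on each copy of $B_{n-1}$, and that (via associativity and the geodesic structure of $\oslash$) the restriction of $\Delta_{n-1}$ to a copy of $B_k$ is a multiple of $\Delta_k$. The only difference is that you supply the geodesic-counting verification of this last restriction claim, which the paper leaves as an evident observation.
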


First we need to show that conditions {\bf (i)} and {\bf (ii)} are
satisfied. It is easy to see that the only statement requiring a
proof is the fact that the function constructed in the previous
paragraph is a fixed point for $v_n$.\medskip

To see this we observe that the values of $f$ corresponding to
copies of $B_{n-1}$ which are mapped one onto another by $v_n$ are
equal because $f$ is a fixed point of $v$ and by construction of
$v_n$. Thus we get the desired conclusion.

\begin{lemma}\label{L:Basis} The set $S_n$ is a basis of the
linear space $Z(B_n)$.
\end{lemma}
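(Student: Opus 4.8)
The plan is to argue by induction on $n$. The base case $n=1$ is immediate, since $S_1$ is chosen to be a basis of $Z(B_1)=Z(B)$. Assuming $S_{n-1}$ is a basis of $Z(B_{n-1})$, I would establish three things about $S_n$: that every element lies in $Z(B_n)$, that $S_n$ is linearly independent, and that $|S_n|=\dim Z(B_n)$. The last two together force $S_n$ to be a basis.

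For membership, the type \textbf{(I)} vectors are elements of $Z(B_{n-1})$ supported on a copy of $B_{n-1}$ sitting inside $B_n$ as a subgraph; since cycles of a subgraph are cycles of the whole graph, they lie in $Z(B_n)$. The type \textbf{(II)} vector attached to $f\in S_1$ is precisely the image of $f$ under the iterated isometric embedding of item \ref{I:Delta} (sending each $\1_e$ to $\Delta_{n-1}$ on the copy of $B_{n-1}$ replacing $e$); as $f\in Z(B_1)$ and these embeddings carry $Z(B_k)$ into $Z(B_{k+1})$, the type \textbf{(II)} vector lies in $Z(B_n)$. For the dimension count I would use $B_n=B_1\oslash B_{n-1}$ together with $\mu(G)=|E(G)|-|V(G)|+1$. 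Writing $m=|E(B)|$ for the number of copies of $B_{n-1}$, one has $|E(B_n)|=m\,|E(B_{n-1})|$ and $|V(B_n)|=|V(B)|+m(|V(B_{n-1})|-2)$, which yields the recursion
\[\mu(B_n)=m\,\mu(B_{n-1})+\mu(B).\]
Since $|S_n|=m\,|S_{n-1}|+|S_1|$ by construction, the induction hypothesis gives $|S_n|=m\,\mu(B_{n-1})+\mu(B)=\mu(B_n)=\dim Z(B_n)$.

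The heart of the argument is linear independence. Suppose a combination of the type \textbf{(I)} vectors (coefficients $a_{e,s}$, indexed by a copy $e\in E(B_1)$ and a vector $s\in S_{n-1}$) and the type \textbf{(II)} vectors (coefficients $b_f$, $f\in S_1$) vanishes. Restricting to a single copy $e$ of $B_{n-1}$ gives
\[\sum_{s\in S_{n-1}}a_{e,s}\,s+\Big(\sum_{f\in S_1}b_f f(e)\Big)\Delta_{n-1}=0.\]
The first sum lies in $Z(B_{n-1})=\ker\partial$, where $\partial$ is the boundary map, whereas $\Delta_{n-1}$ is a positive average of indicator functions of bottom-top geodesics, so $\partial\Delta_{n-1}=\tfrac1{D_{n-1}}(\1_{\mathrm{top}}-\1_{\mathrm{bottom}})\neq0$. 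Applying $\partial$ forces the scalar $\sum_f b_f f(e)$ to be $0$; the residual relation $\sum_s a_{e,s}s=0$ then yields all $a_{e,s}=0$ because $S_{n-1}$ is a basis. Finally, $\sum_f b_f f(e)=0$ for every $e\in E(B_1)$ means $\sum_f b_f f$ vanishes on $E(B)$, and independence of $S_1$ in $\ell_1(E(B))$ gives all $b_f=0$.

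The step I expect to be the main obstacle is the clean separation of the two vector types in the independence argument, which rests entirely on the fact that $\Delta_{n-1}\notin Z(B_{n-1})$, i.e. that $\Delta_{n-1}$ is not a cycle. This is the one place where I must use the concrete description of $\Delta_{n-1}$ as an average over geodesics, so that its boundary is a nonzero multiple of $\1_{\mathrm{top}}-\1_{\mathrm{bottom}}$; once this is secured, restriction to a single copy decouples the coefficients and the remainder is routine bookkeeping.
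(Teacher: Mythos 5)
Your proof is correct, and on the spanning half it takes a genuinely different route from the paper's. The paper proves completeness directly: a cycle $C$ lying inside some copy of $B_{n-1}$ is handled by induction, and otherwise collapsing every copy of $B_{n-1}$ to the edge of $B_1$ it evolved from turns $C$ into a nontrivial cycle $\widehat C\in Z(B_1)$; writing $\widehat C=\sum\gamma_i s_i$ and lifting by $\mathcal{E}_{n-1}=E_{n-1}\cdots E_1$ gives a combination of type {\bf (II)} vectors, and the difference $C-\mathcal{E}_{n-1}\widehat C$ is then absorbed copy-by-copy into the spans of the $S_{n-1}$'s because $\Delta_{n-1}-\1_p\in Z(B_{n-1})$ for every bottom-top path $p$ of $C$. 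You bypass spanning entirely and replace it with the dimension count $\mu(B_n)=m\,\mu(B_{n-1})+\mu(B)$ with $m=|E(B)|$, obtained from $\mu(G)=|E(G)|-|V(G)|+1$ (the formula the paper quotes from Biggs in the introduction); your vertex/edge bookkeeping for $B_n=B_1\oslash B_{n-1}$ is right, and linear independence together with the matching cardinality then forces a basis. For independence, both arguments hinge on the same key fact, namely that a nonzero multiple of $\Delta_{n-1}$ does not lie in $Z(B_{n-1})$: the paper invokes it qualitatively (the type {\bf (II)} part of a vanishing combination would restrict on some copy to a nonzero multiple of $\Delta_{n-1}$, which the type {\bf (I)} part, lying in $Z(B_{n-1})$, cannot cancel), while you make it mechanical via the boundary operator, $\partial\Delta_{n-1}=\frac1{D_{n-1}}(\1_{\mathrm{top}}-\1_{\mathrm{bottom}})\ne 0$, where only the trivial inclusion $Z(B_{n-1})\subseteq\ker\partial$ is needed, and you recover the vanishing of every coefficient copy by copy. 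The trade-off: the paper's argument is constructive and self-contained (it exhibits the expansion of any given cycle in $S_n$), whereas yours is shorter, outsources spanning to a standard counting identity, and cleanly isolates the one structural fact ($\partial\Delta_{n-1}\neq0$) on which the whole construction rests.
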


\begin{proof} We use induction. For $n=1$ this is true according
to our choice. Suppose that the statement holds for $n-1$, and
show that this implies it for $n$. We need to show two things:
completeness and linear independence.\medskip

\noindent{\bf Completeness:} (1) If a cycle is contained in one of
the $B_{n-1}$, then it is contained in the linear span of the
corresponding $S_{n-1}$ by the induction hypothesis, and we are
done because $S_{n}$ contains that $S_{n-1}$.

(2) If a cycle $C$ is not contained in any of $B_{n-1}$, then,
after collapsing each of $B_{n-1}$ to the edge of $B_1$ from which
it evolved (according to $B_n=B_1\oslash B_{n-1}$), we get a
nontrivial cycle $\widehat C$ in $B_1$. This cycle is a linear
combination of cycles of $S_1$ (since $S_1$ is a basis in
$Z(B_1)$), so $\widehat C=\sum\gamma_is_i$ for some
$\gamma_i\in\mathbb{R}$ and $s_i\in S_1$. Denote the composition
$E_{n-1}\dots E_1$ by $\mathcal{E}_{n-1}$. We have

\[\mathcal{E}_{n-1}\widehat C=\sum\gamma_i\mathcal{E}_{n-1}s_i.\]

The description of the type {\bf (II)} vectors implies that
vectors $\mathcal{E}_{n-1}s_i$ are elements of $S_n$. Therefore it
remains to analyze the difference $C-\mathcal{E}_{n-1}\widehat C$.
\medskip

For each $B_{n-1}$ in $B_n$ (according to $B_n=B_1\oslash
B_{n-1}$) one of the following is true:

\begin{itemize}

\item There are no edges of $C$ and no edges of
$\mathcal{E}_{n-1}\widehat C$ in it.

\item There is a path $p$ from the bottom to top of $B_{n-1}$
which is contained in $C$, and the corresponding part of
$\mathcal{E}_{n-1}\widehat C$ is $\Delta_{n-1}$.

\end{itemize}

It remains to observe that that $\Delta_{n-1}-\1_p$ belongs to
$Z(B_{n-1})$ (follows from the formula for $\Delta_{n-1}$). Thus
the difference $C-\mathcal{E}_{n-1}\widehat C$ can be written as a
sum of elements of $S_{n-1}$ for those $B_{n-1}$ which contain
nontrivial sub-paths of $C$. As a conclusion we get that $C$ is in
the linear span of $S_n$.
\medskip

\noindent{\bf Linear Independence.} It is clear that a nontrivial
linear combination of vectors of type {\bf (I)} cannot be equal to
$0$, since $S_{n-1}$ are linearly independent and $B_{n-1}$ are
edge-disjoint.
\medskip

For this reason to prove linear independence it is enough to show
that a nontrivial linear combination containing vectors of type
{\bf (II)} cannot be $0$.
\medskip

We split a linear combination as $a+b$, where $a$ is a linear
combination of vectors of type {\bf (I)}, and $b$ is a linear
combination of vectors of type {\bf (II)}. Observe that $b$ can be
obtained in the following way: we consider a non-zero vector
vector in $Z(B_1)$, and then replace each $\1_e$ used in this
vector, by the corresponding $\Delta_{n-1}$. Because of this the
restriction of $b$ to at least one of $B_{n-1}$ does not belong to
$Z(B_{n-1})$. Hence $a+b$, restricted to that $B_{n-1}$ is
nonzero, and we are done.
\end{proof}

\subsection{Invariant projections annihilate functions $c(B_1)$
and their images under $E_k$}\label{S:Annihil}

\begin{lemma}\label{L:Annihil} The projection $P_{G_n}$ annihilates all of the functions
of the form $c(B_1)$ for some $B_1$ in $B_n$, and functions which
are obtained from $c(B_1)$ by repeated applications of $E_k$.
\end{lemma}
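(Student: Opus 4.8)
The plan is to run the Gr\"unbaum--Rudin symmetry argument in the following form: to see that an arbitrary invariant projection kills a vector $w$, I produce a single element $g\in G_n$ with $g(w)=-w$; since $P_{G_n}$ commutes with every element of $G_n$ (equation \eqref{E:CommWithGn}), the image $P_{G_n}(w)$ then lies in $Z(B_n)\cap\ker(g+I)$, and it remains to make this intersection as small as possible. The whole reduction rests on one elementary fact that I would record first: if an element of $Z(B_n)$ is supported on the edges of a single copy $K$ of some $B_m$ sitting inside $B_n$, then, by flow-conservation at the two gluing vertices of $K$, it already belongs to the cycle space $Z(K)=Z(B_m)$.

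Before the main argument I would isolate an auxiliary fact, proved by induction on $k$: \emph{the only vector of $Z(B_k)$ fixed by the whole group $G_k$ is $0$}. The base case $k=1$ is exactly the first half of item \ref{I:NoFixCyclesButc(B)} (no nonzero $\mathcal{H}$-fixed cycle). For the inductive step I would write a $G_k$-fixed $\zeta\in Z(B_k)$ in the basis $S_k$ as $\zeta=\zeta_I+\zeta_{II}$, with respect to $B_k=B_1\oslash B_{k-1}$. Applying, for each inner copy $C_{e_0}$ of $B_{k-1}$, an arbitrary $g'\in G_{k-1}$ supported on that copy (which belongs to $G_k$), and using that $\zeta_{II}|_{C_{e_0}}$ is a multiple of $\Delta_{k-1}$ and hence $G_{k-1}$-fixed by Observation \ref{O:FixedDelta_m}, shows that the type (I) component $z_{e_0}\in Z(C_{e_0})\cong Z(B_{k-1})$ is $G_{k-1}$-fixed; by the inductive hypothesis $z_{e_0}=0$, so $\zeta_I=0$. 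The surviving vector $\zeta_{II}$ is the $\Delta_{k-1}$-spread of some $f\in Z(B_1)$, and invariance under the outer horizontal automorphisms $\mathcal{H}$ forces $f$ to be $\mathcal{H}$-fixed, whence $f=0$ again by item \ref{I:NoFixCyclesButc(B)}.

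With this in hand I would treat the two families. For $w=c(B_1)$ supported on a copy $K\cong B_1$, take $g$ to be the vertical automorphism $v$ on $K$ and the identity elsewhere; one checks that it is cycle-preserving and sends bottom--top paths to bottom--top paths, so $g\in G_n$, and $g(w)=v(c(B_1))=-c(B_1)=-w$ by the displayed condition in item \ref{I:Delta}. Then $P_{G_n}(w)\in Z(B_n)\cap\ker(g+I)$ is supported on $K$, hence lies in $Z(B_1)$; but $v$ fixes $Z(B_1)$ pointwise (item \ref{I:BasInvv}), so $\ker(v+I)\cap Z(B_1)=\{0\}$ and $P_{G_n}(w)=0$. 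For a function $w$ obtained from $c(B_1)$ by repeated applications of the $E_k$ and supported on a copy $K\cong B_m$, the restriction of $w$ to each inner copy of $B_{m-1}$ is a multiple of $\Delta_{m-1}$, the outer multipliers being the values of $c(B_1)$; using that $\Delta_{m-1}$ is $v_{m-1}$-invariant (Observation \ref{O:FixedDelta_m}) and that $v$ reverses the sign of $c(B_1)$ on the outer $B_1$, the copy-level vertical automorphism $g=v_m$ extended by the identity again lies in $G_n$ and satisfies $g(w)=-w$.

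The one genuinely new point, and the step I expect to be the main obstacle, is that for $m\ge 2$ the space $Z(B_m)\cap\ker(v_m+I)$ is no longer trivial (it contains, for instance, differences $z^{(e)}-z^{(v(e))}$ of a cycle placed in two $v$-paired inner copies), so the vertical automorphism alone does not pin down $P_{G_n}(w)$; this is exactly the non-uniqueness of the invariant projection flagged in Section \ref{S:GRAAppr}. To finish I would use the vertical step only to \emph{localize}: since the type (II) basis vectors are $v_m$-fixed, $P_{G_n}(w)$ has no type (II) component, is supported on $K$, and equals $\sum_e z_e$ with $z_e\in Z(C_e)$ over the inner copies $C_e\cong B_{m-1}$. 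As each $g'\in G_{m-1}$ supported on a single $C_{e_0}$ fixes $w$ (because $w|_{C_{e_0}}$ is a multiple of $\Delta_{m-1}$), it fixes $P_{G_n}(w)$ as well, forcing $z_{e_0}$ to be $G_{m-1}$-fixed in $Z(B_{m-1})$; the auxiliary fact then gives $z_{e_0}=0$ for every $e_0$, so $P_{G_n}(w)=0$. I anticipate that verifying $g(w)=-w$ for the $E_k$-images and organizing the inductive auxiliary claim will be the most delicate bookkeeping, but conceptually the whole proof is driven by combining a single vertical automorphism (to localize and discard the $v_m$-symmetric cycles) with the horizontal automorphisms (to annihilate the localized cycles).
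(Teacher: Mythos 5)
Your proposal is correct, and while it runs on the same engine as the paper's proof --- the invariance relation \eqref{E:CommWithGn}, vertical automorphisms localized to the supporting copy $K\cong B_m$, lifted horizontal automorphisms, Observation \ref{O:FixedDelta_m}, and condition \ref{I:NoFixCyclesButc(B)} --- its architecture is genuinely different. The paper argues by contradiction with a case analysis on the expansion of $q=P_{G_n}f$ in the \emph{global} basis $S_n$: either some basis vector from a level $S_k$ with $k\le m$ appears (killed by a lifted horizontal automorphism that moves $\mu$ but fixes $f$ and $q-\mu$), or all levels exceed $m$ (killed by the localized $v_m$, which then fixes $q$ while negating $f$). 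You never expand $q$ globally; instead you first \emph{localize}: since your sign-flipping element $g$ is the identity off $K$, the relation $gq=-q$ forces $q$ to vanish outside $K$, and your (correct) circulation-support observation places $q$ in $Z(B_m)$ itself --- a step with no counterpart in the paper. You then dispose of the type (II) part by $v_m$-antisymmetry and reduce the type (I) part to a clean standalone lemma, proved by induction, that $Z(B_k)$ contains no nonzero $G_k$-fixed vector; the paper never isolates this statement, though your inductive step replays the mechanism of its Case $(\le)$. What your route buys is modularity (the fixed-point lemma is reusable, and makes transparent exactly where the non-uniqueness of invariant projections is circumvented) and an explicit diagnosis of why a single sign-flipping symmetry cannot suffice for $m\ge 2$, namely $Z(B_m)\cap\ker(v_m+I)\ne\{0\}$; what the paper's route buys is economy, since it needs neither the circulation-support fact nor a separate induction, only the basis Lemma \ref{L:Basis} already in hand. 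Two details you should write out when formalizing: that $v_m$ maps the span of the type (I) vectors into itself (so that $v_mq=-q$ really forces the type (II) component to vanish), and that the lift $\hat g$ of a horizontal automorphism satisfies $\hat g(\hbox{spread of }f)=\hbox{spread of }gf$, which is what makes the spread map $\mathcal{H}$-equivariant and injective in your auxiliary lemma.
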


\begin{proof} Let $f$ be some function of the described form in
$\ell_1(E(B_n))$ and let $B_m$, $m\le n$, be a subgraph of $B_n$
supporting $f$. It is easy to see that the absolute value of $f$
is equal to the function $\Delta_m$ described in
\eqref{E:Delta_m}, and that $f$ is positive on edges which are
closer to the top of $B_m$ and negative on the edges which closer
to the bottom of $B_m$.\medskip

Suppose, contrary to the statement of the Lemma, that
$P_{G_n}f=q\ne 0$. Since $q\in Z(B_n)$, it is a linear combination
of vectors described in {\bf (I)--(II)}.\medskip

It is clear that one of the following is true:

\begin{itemize}

\item[$(\le)$] One of the vectors of the basis described in {\bf
(I)--(II)}, present in the linear combination representing $q$,
belongs to $S_k$ with $k\le m$.

\item[$(>)$] All vectors of the basis described in {\bf
(I)--(II)}, present in the linear combination representing $q$,
belong to $S_k$ with $k>m$.

\end{itemize}

We show, that in each of these cases we get a contradiction with
the invariance of $P_{G_n}$.
\medskip

\noindent{\bf Case $(\le)$.} Assume that $k$ is the smallest
integer with this property. Since it is the smallest integer, all
basis elements with nonzero coefficients belonging to $S_k$ are of
type {\bf (II)}. Therefore they correspond to certain elements of
$S_1$, and their linear combination $\mu$ (as it is present in the
representation of $q$) corresponds to nonzero element $\tau$ of
$Z(B_1)$. By Condition \ref{I:NoFixCyclesButc(B)} (on $B$) there
exists a horizontal automorphism $g$ of $B_1$, such that $g\tau\ne
\tau$. Let us consider an automorphism $\hat g$ of $S_k$ {\it
induced} by $g$ in the following way. The automorphism $g$ is a
bijection of $E(B_1)$. In $B_k=B_1\oslash B_{k-1}$ we consider the
corresponding bijections of subgraphs $B_{k-1}$ which evolved from
those edges. It is clear that $\hat g\in G_k$, and that $\hat
g\mu\ne\mu$.
\medskip

On the other hand, it is clear that $\hat g f=f$. In the case
where $k=m$ this follows from the fact that $c(B)$ is a fixed
point of all horizontal automorphisms (Condition
\ref{I:NoFixCyclesButc(B)}). In the case where $k<m$, this follows
from Observation \ref{O:FixedDelta_m}. We get a contradiction with
the fact that $P_{G_n}$ is an invariant projection (see
\eqref{E:CommWithGn}), because
\[\begin{split}P_{G_n}f&=P_{G_n}\hat gf=\hat gP_{G_n}f=\hat gq=\hat
g(\mu+(q-\mu))\\&=\hat g\mu+\hat g(q-\mu)=\hat g(\mu)+(q-\mu)\ne
\mu+q-\mu=q,\end{split}\] where we used the fact that elements of
the basis $S_n$ used in the decomposition of $q-\mu$ are either
edge-disjoint with the copy of $B_k$ on which $\mu$ is supported,
or are proportional to $\Delta_k$ on that $B_k$. In either case
$(q-\mu)$ is a fixed point of $\hat g$.
\medskip

\noindent{\bf Case $(>)$.} In this case, by Observation
\ref{O:FixedDelta_m}, any function used in the decomposition of
$q$ with respect to the basis $S_n$ is a fixed point of $v_m$,
which was defined in {\bf (B)}.\medskip

On the other hand, $v_mf=-f$, by the definitions of $v_m$ and $f$.
This contradicts the fact that $P_{G_n}$ is an invariant
projection (see \eqref{E:CommWithGn}), because we get
\[-P_{G_n}f=P_{G_n}v_mf=v_mP_{G_n}f=v_mq=q=P_{G_n}f.\qedhere
\]
\end{proof}

\subsection{Combining everything}\label{S:Combin}

\begin{proof}[Proof of Theorem \ref{T:B_n}]\label{Page:EndPrf} Let us show, using Lemma \ref{L:RedProj}, that in order to prove Theorem
\ref{T:B_n} it suffices to show that for each $r\in \mathbb{N}$
there exists $n=n(r)\in\mathbb{N}$,  $C_r\in Z(B_n)$, and a linear
combination $A_r$ of vectors of the forms $c(B_1)$ and their
images under $\{E_n\}$ such that
\begin{equation}\label{E:Norm1}
||C_r+A_r||=1
\end{equation}
and
\begin{equation}\label{E:P_Large}
||C_r||\ge 1+\frac{\alpha (r-1)}{2},
\end{equation}
where $\alpha>0$ is the number introduced in item \ref{I:z} of
Section \ref{S:CondB}, and to find a suitable estimate for the
corresponding $n(r)$ in terms of $r$.\medskip

In fact, for every projection $P:\ell_1(E(B_n))\to Z(B_n)$ we get

\[||P||\ge ||P_{G_n}||\ge ||P_{G_n}(C_r+A_r)||\stackrel{\small\hbox{(Lemma
\ref{L:Annihil})}}{=}||C_r||\ge 1+\frac{\alpha(r-1)}2,\] which
proves the desired inequality for the projection constant.
\medskip

Case $r=1$. We let $C_1$ be any $\ell_1$-normalized element of
$S_1$ (use non-triviality);  $A_1=0$. Everything is obvious.

Inductive step. Suppose that we have already constructed $C_r$ and
$A_r$ in some $B_{n(r)}$.
\medskip

We apply $E_{n(r)}$ to $C_r+A_r$. Observe that $E_{n(r)}$ maps the
cycle space into the cycle space, and preserves the desired form
of the function $A_r$. Observe that $C_r+A_r$, as an element of
$\ell_1(E(B_{n(r)}))$ is a linear combination of edges. Therefore
$E_{n(r)}(C_r+A_r)$ is of the form $\displaystyle{\sum_{e\in
E(B_{n(r)})}a_{e,1}E_{n(r)}\1_e}$, where $a_{e,1}$ are real
numbers. The functions $E_{n(r)}\1_e$ are of the form $\Delta$
(see \eqref{E:Delta}), supported on different copies of $B_1$,
recall that
\begin{equation}\label{E:Dec}
B_{n(r)+1}=B_{n(r)}\oslash B_1 \end{equation} (see
\eqref{E:DiffProd}). We let $C_r^1=E_{n(r)}C_r$ and let
\[A_r^1=E_{n(r)}A_r+\sum_{e\in
E(B_{n(r)})}a_{e,1}c(B_1),\] where $c(B_1)$ is taken on the
corresponding copy of $B_1$, according to \eqref{E:Dec}. It is
easy to see that $||C_r^1+A_r^1||=1$, and its support is exactly
half (in many respects) of the support of $E_{n(r)}(C_r+A_r)$.
\medskip

We repeat the procedure for $C_r^1$ and $A_r^1$ instead of $C_r$
and $A_r$. We do this $t$ times, and get the functions which we
denote $C_r^t$ and $A_r^t$.

Some observations:

\begin{itemize}

\item The function $C_r^t$ is an image of $C_r$ under the
composition $E_{n(r)+t-1}\dots E_{n(r)}$.

\item The function $A_r^t$ is a linear combination of
$E_{n(r)+t-1}\dots E_{n(r)}A_r$ and images of $c(B_1)$ under some
compositions of $E_k$.

\end{itemize}

After that we do a somewhat different procedure. Namely we write
$E_{n(r)+t}(C_r^t+A_r^t)$ in the form $\sum_{e\in E(B_{n(r)+t})}
a_{e,t+1}E_{n(r)+t}\1_e$, where $a_{e,t+1}$ are real numbers. The
functions $E_{n(r)+t}\1_e$ are multiples of $\Delta$, supported on
different copies of $B_1$, recall that
$B_{n(r)+t+1}=B_{n(r)+t}\oslash B_1$. Now we let
\[A_{r+1}=E_{n(r)+t}(A_r^t)\]
and
\[C_{r+1}=E_{n(r)+t}(C_r^t)+\sum_{e\in E(B_{n(r)+t})}
a_{e,t+1}d(B),\] where $d(B)$ is the function defined in item
\ref{I:z} of Section \ref{S:CondB} and supported on the
corresponding copy of $B_1$.
\medskip

It is clear from the definition of $d(B)$ that
$||C_{r+1}+A_{r+1}||=1$. It is also clear that $C_{r+1}\in
Z(B_{n(r)+t+1})$, and $A_{r+1}$ is of the desired form.
\medskip

Observe that since $||\sum a_{e,t+1}1_e||=1$, we have $||\sum
a_{e,t+1}d(B)||=\alpha$ (see item \ref{I:z} in Section
\ref{S:CondB}). Our construction is such that the norm of the part
of $E_{n(r)+t}\dots E_{n(r)}C_r$ supported in the support of
$\sum_{e\in E(B_{n(r)+t})} a_{e,t+1}d(B)$ is
$\displaystyle{\frac1{2^t}||C_r||}$. Therefore if we pick $t$ in
such a way that
\begin{equation}\label{E:Need}\frac1{2^t}||C_r||<\frac{\alpha}4,\end{equation} we get

\[\begin{split}||C_{r+1}||&=\left\|E_{n(r)+t}\dots E_{n(r)}C_r+\sum_{e\in E(B_{n(r)+t})}
a_{e,t+1}d(B)\right\|\\&\ge ||E_{n(r)+t}\dots E_{n(r)}C_r||+
\left\|\sum_{e\in E(B_{n(r)+t})}
a_{e,t+1}d(B)\right\|-2\frac{\alpha}4\\&\stackrel{\eqref{E:P_Large}}{\ge}
1+\frac{\alpha (r-1)}2+\alpha-\frac{\alpha}2=1+\frac{\alpha
r}2.\end{split}\]

It remains to find an estimate for $n$ in terms of $r$. To achieve
the condition \eqref{E:Need} for $r\ge 2$ we need to pick $t\ge
C\ln r$ for some $C>0$.

This leads to the estimate $\lambda(Z(B_n),\ell_1(E(B_n)))\ge ck$
if $n\ge Ck\ln k$, where $c>0$, $C<\infty$ (the constants in these
statements do not have to be the same) and $d(n)$ is the dimension
of $\lf(B_n)$.
\medskip

It is easy to see that this estimate implies
\[ \lambda(Z(B_n),\ell_1(E(B_n)))\ge \frac{cn}{\ln n}.\qedhere\]
\end{proof}

\section{Consequences for multibranching diamond graphs and Laakso graphs}\label{S:Diamond}

Our next goal is to show that diamond graphs and Laakso graphs
satisfy the conditions listed in Section \ref{S:CondB}.

\subsection{Multibranching diamond graphs}

Condition \ref{I:1} in the case where $B$ is $K_{2,n}$, $n\ge 2$,
with the top and the bottom being the vertices of the part
containing two vertices is obvious.\medskip

Condition \ref{I:Dir} is clear.\medskip

For Condition \ref{I:Vert} we choose the automorphism in such a
way that it maps each bottom-top path onto itself.\medskip

With this choice of $v$ the Condition \ref{I:BasInvv} is easy to
check.\medskip

Condition \ref{I:Delta} is clearly satisfied.\medskip

Condition \ref{I:NoFixCyclesButc(B)}: A nonzero element of
$Z(K_{2,n})$ cannot be a fixed point of $\mathcal{H}$ because
(according to the directions chosen on edges) each non-zero
element of $Z(K_{2,n})$ has bottom-top paths on which the value is
positive and bottom-top paths on which the value is
negative.\medskip

The second part of Condition \ref{I:NoFixCyclesButc(B)} holds
because any horizontal automorphism maps edges which are closer to
the top (bottom) to edges which are closer to the top (bottom).
\medskip

Finally, Condition \ref{I:z} is satisfied because we consider
$n\ge 2$ and $s_1$ (element of the basis listed above) is an
example of a nontrivial cycle in $Z(K_{2,n})$. The value of
$\alpha$ is $\frac{2(n-1)}n$.

\subsection{Laakso graphs}

Condition \ref{I:1} in the case where $B$ is $\mathcal{L}_1$ with
the natural choice of the top and the bottom is obvious.\medskip

Condition \ref{I:Dir} is clear.\medskip

For Condition \ref{I:Vert} we choose the automorphism $v$ which
maps each bottom-top path onto itself.\medskip

Condition \ref{I:BasInvv}: There is only one cycle in
$\mathcal{L}_1$, it is obviously the fixed point of the chosen
automorphism of $\mathcal{L}_1$.\medskip

Condition \ref{I:Delta} is clearly satisfied.\medskip

The first part of Condition \ref{I:NoFixCyclesButc(B)} holds
because, by the choice of the directions of edges any nonzero
element of $Z(\mathcal{L}_1)$ has positive value on one side and
negative value on the other side, and thus is mapped onto its
negative by a nontrivial element of $\mathcal{H}$.\medskip

The second part of Condition \ref{I:NoFixCyclesButc(B)} holds
because any horizontal automorphism maps edges which are closer to
the top (bottom) to edges which are closer to the top (bottom).
\medskip

Condition \ref{I:z} is clearly satisfied. The value of $\alpha$ is
$\frac12$.

\subsection{Non-uniqueness of invariant projections of $\ell_1(E(\mathcal{L}_2))$
onto $Z(\mathcal{L}_2)$}\label{S:NonUnLaakso}

Our main goal in this section is to show that for Laakso graphs
there is no uniqueness of invariant projections. It is clear that
one of the invariant projections is the orthogonal projection onto
$Z(\mathcal{L}_2)$ in $\ell_2(E(\mathcal{L}_2))$. So it is enough
to construct an invariant projection which is not orthogonal.

\begin{proposition}\label{P:CutNotTo0} There exists an invariant linear projection of $\ell_2(E(\mathcal{L}_2))$ onto  $Z(\mathcal{L}_2)$
which is different from the orthogonal projection.
\end{proposition}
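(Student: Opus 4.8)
The plan is to exploit the fact that the orthogonal projection $P_0$ of $\ell_2(E(\mathcal{L}_2))$ onto $Z:=Z(\mathcal{L}_2)$ is only one point in the affine space of all invariant projections, and to produce a second one by adding an equivariant perturbation. Each $g\in G_2$ acts on $\ell_2(E(\mathcal{L}_2))$ as a coordinate permutation, hence orthogonally, and it leaves $Z$ invariant (Section \ref{S:CycBij}); therefore it leaves $Z^\perp$ invariant and commutes with $P_0$, so $P_0$ is invariant. If $T:\ell_2(E(\mathcal{L}_2))\to Z$ is any nonzero $G_2$-equivariant linear map that vanishes on $Z$, then a direct check (using $T|_Z=0$, $\operatorname{ran}T\subseteq Z$, and $P_0|_Z=\mathrm{id}$) gives $(P_0+T)^2=P_0+T$ with range $Z$ and $P_0+T$ commuting with $G_2$, while $P_0+T\neq P_0$. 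So it suffices to construct one such $T$, i.e. to find a $G_2$-irreducible occurring both in $Z$ and in $Z^\perp$.

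First I would record that this cannot happen inside a single copy of $\mathcal{L}_1$. Decomposing $\ell_2(E(\mathcal{L}_1))$ under $G_1$, the one-dimensional cycle space spans the sign character on which the left--right swap acts by $-1$, and this character does not reappear in the five-dimensional complement. This is precisely why invariant projections are unique at level $1$, and it forces the construction to use the two-level structure $\mathcal{L}_2=\mathcal{L}_1\oslash\mathcal{L}_1$.

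For the construction I would use the type (II) generator of $Z$: the ``outer'' cycle $z_{\mathrm{out}}$ obtained from the cycle of the outer $\mathcal{L}_1$ by replacing each of its four diamond edges with the geodesic average $\Delta_1$ (see \eqref{E:Delta_m}) in the corresponding inner copy, with the cyclic signs $(+,+,-,-)$. In each inner copy the $G_1$-fixed subspace of $\ell_2(E(\mathcal{L}_1))$ is two-dimensional, spanned by the two stem edges and by the sum of the four diamond edges, so there is a nonzero $G_1$-invariant vector $\Delta_1^\perp$ orthogonal to $\Delta_1$. Let $w_{\mathrm{out}}$ be built exactly as $z_{\mathrm{out}}$ but with $\Delta_1^\perp$ in place of $\Delta_1$ on the same four copies and with the same signs. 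Since on every copy $w_{\mathrm{out}}$ is a multiple of $\Delta_1^\perp$, it is orthogonal to each inner cycle (a sign-character vector) and to $z_{\mathrm{out}}$ (a multiple of $\Delta_1$), and it is supported off the two stem copies; hence $w_{\mathrm{out}}\in Z^\perp$. Moreover $\Delta_1$ and $\Delta_1^\perp$ are both $G_1$-invariant, so any internal ``upside-down'' relabelling coming from $G_2$ fixes them, and both $z_{\mathrm{out}}$ and $w_{\mathrm{out}}$ transform under the copy-permuting part of $G_2$ through the identical sign pattern of the outer cycle. Consequently
\[T(x):=\frac{\langle x,w_{\mathrm{out}}\rangle}{\|w_{\mathrm{out}}\|^2}\,z_{\mathrm{out}}\]
is $G_2$-equivariant, nonzero, maps into $Z$, and vanishes on $Z$; then $P_0+T$ is the required invariant projection distinct from $P_0$.

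The main obstacle is the bookkeeping needed to confirm that $z_{\mathrm{out}}$ and $w_{\mathrm{out}}$ really transform by one and the same character of $G_2$. This requires a precise description of $G_2$, including the cycle-preserving bijections that are not graph automorphisms (the stem swaps), together with a verification that every such bijection acts on the intrinsic vectors $\Delta_1,\Delta_1^\perp$ of each copy without introducing spurious signs. Once the $G_1$-decomposition above and the $G_1$-invariance of $\Delta_1^\perp$ are in hand, these reduce to routine sign computations.
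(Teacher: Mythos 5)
Your proposal is correct and is essentially the paper's own construction in a more structural packaging: the paper exhibits the projection explicitly (orthogonal on the two tail copies, sending each central stem edge to $\frac{\theta}{8}F$ and each central diamond edge to $\frac{\theta}{4}\chi_C$), and its difference from the orthogonal projection is precisely a nonzero scalar multiple of your rank-one map $T$, since $F$ is proportional to your $z_{\mathrm{out}}$ and the functional direction is your $w_{\mathrm{out}}$ (stems minus diamonds, signed by side). Both arguments also defer the same final verification---that every cycle- and path-preserving bijection in $G_2$ acts on these vectors through the common left--right sign character---as a routine check, so your outline matches the paper's proof in substance and in level of detail.
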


\begin{proof} We consider the following projection:
It is like the orthogonal projection on the top and bottom
``tails'' of $\mathcal{L}_2$ and is different only in the central
part. In the central part there are edges which belong to the
16-cycle only and edges which belong also to $4$-cycles.
\medskip

We introduce the following functions in $\ell_1(E(\mathcal{L}_2))$
supported on the central part of $\mathcal{L}_2$: \medskip

\noindent{\bf (1)} Indicator functions $\chi_C$ of cycles of
length $4$ (see \eqref{E:SignInd}) directed counterclockwise, so
they have values $1$ on the right sides and values $-1$ on the
left sides.
\medskip

\noindent{\bf (2)} The function $F=\frac{F_1+F_2}2$, where $F_1$
is the indicator function of the directed counterclockwise ``outer
cycle'' of length $16$ and $F_2$ is the indicator function of the
directed counterclockwise ``inner cycle'' of length $16$.
\medskip

We consider the projection which acts in the following way:
\smallskip

\noindent{\bf (a)} It maps each edge which is in the ``$16$-cycle
only''  to $\frac{\theta}8\,F$, where $\theta=1$ on the right half
and $\theta=-1$ on the left half.
\medskip

\noindent{\bf (b)} It maps each edge which is ``both in the
$16$-cycle and $4$-cycle'' onto the $\frac{\theta}4\chi_C$, where
$C$ is the corresponding $4$-cycle and $\theta=1$ on the right
side and $\theta=-1$ on the left side.
\medskip

It is straightforward to check that this projection is invariant
in the sense of \eqref{E:CommWithGn} and is different from the
orthogonal projection.
\end{proof}

\section{Lipschitz free spaces on diamond graphs -- more precise results using Haar
functions}\label{S:DiamHaar}

In this section we present an alternative self-contained proof of
our results for the binary diamond graphs $D_n$. This proof uses
the Haar system for $L_1[0,1]$ and makes an interesting connection
with some open problems concerning the even  levels of the Haar
system. At the end of this section we extend the proof to handle
the multi-branching diamond graphs as well.

We begin by reformulating  the definition of the binary diamond graphs in order to use the Haar system.  For $n \ge 2$, we shall consider $D_n$ as consisting of four copies of $D_{n-1}$, namely `top left', denoted $TL_n$, `bottom left', denoted $BL_n$, `bottom right', denoted $BR_n$,
and `top right', denoted $TR_n$. In this identification the bottom vertex of $TL_n$ coincides with the top vertex of $BL_n$, etc.

We shall identify the edge space of $D_n$, denoted $\ell_1(D_n)$,
with a certain subspace of $L_1[0,1]$. This identification is
recursive. We identify the edge vectors of $\ell_1(D_1)$ with the
functions $4\cdot\1_{((i-1)/4, i/4]}$ for $1 \le i \le 4$,
which are disjointly
supported unit vectors in $L_1[0,1]$, ordering the edges
$i=1,\dots,4$ counterclockwise from the top vertex. Now suppose
that $n \ge 2$ and that $\ell_1(D_{n-1})$ has been identified with
a subspace of $L_1[0,1]$. For a function $f\in L_1[0,1]$ we denote
by $Qf$ the function which is $0$ in $\left(\frac14,1\right]$ and
is given by $(Qf)(t)=4f(4t)$ for $t\in \left[0,\frac14\right]$. It
is clear that $Q$ is an isometric embedding of $L_1[0,1]$ into
itself. Then we identify $\ell_1(TL_n)$ with $Q(\ell_1(D_{n-1}))$,
and identify $\ell_1(BL_n)$, $\ell_1(BR_n)$, and $\ell_1(TR_n)$,
with
copies of $\ell_1(TL_n)$  translated by $\frac14,\frac12$, and $\frac34$ to the right, respectively. It follows that the edge
vectors of $\ell_1(D_n)$ are the functions $4^n
\cdot\1_{((i-1)/4^n, i/4^n]}$ for $1 \le i \le 4^n$, which are
disjointly supported unit vectors in $L_1[0,1]$.

Let us now determine the subspace of $L_1[0,1]$ which corresponds
under this identification to the cycle space of $D_n$, denoted
$Z(D_n)$. First, let us recall the definition of the Haar system
$(h_i)_{i\ge0}$. We define $h_0 := \1_{(0,1]}$, and for $n \ge 0$
and $0 \le i \le 2^n-1$,
$$h_{2^n + i} := \1_{(i/2^n, (2i+1)/2^{n+1}]} -
\1_{((2i+1)/2^{n+1}, (i+1)/2^n]}.$$ Let $H_n := \{ h_i \colon 2^n
\le i \le 2^{n+1}-1\}$ be the collection of all $2^n$ Haar
functions on the same level with support of length $2^{-n}$. Let
$e_n$ be the cycle vector corresponding to the `large outer cycle'
of $D_n$. To understand the pattern for $e_n$, first we calculate
$e_1$, $e_2$ and $e_3$. Clearly, $$e_1 = 4(\1_{[0,1/2]} -
\1_{[1/2,1]}) = 4h_1,$$
and \begin{equation}\label{eq: e_2} \begin{split} e_2 &= 16(\1_{[0,1/8]} + \1_{[2/8,3/8]} - \1_{[5/8,6/8]}- \1_{[7/8,1]})\\
& = 8(h_1 + h_4 + h_5 + h_6 + h_7)\\
&= 2e_1 + 8(\sum_{h \in A_2} h), \end{split}\end{equation} where $A_2 = \{ h \in H_2 \colon \operatorname{supp} h \subseteq \operatorname{supp} e_1\}$.
Note that
\begin{equation} \label{eq: e_3}
\begin{split} e_3 &= 64[ (\1_{[0,1/32]} + \1_{[2/32,3/32]}+ \1_{[8/32,9/32]} + \1_{[10/32,11/32]})\\ &\quad- (\1_{[21/32,22/32]} + \1_{[23/32,24/32]}+ \1_{[29/32,30/32]} + \1_{[31/32,1]})]\\
&= 16[(h_1 + h_4 + h_5 + h_6 + h_7)\\&\quad +2(h_{16} + h_{17} + h_{20} + h_{21}+h_{26} + h_{27} + h_{30} + h_{31})]\\
&= 2e_2 + 32(\sum_{h \in A_3} h),
\end{split}\end{equation} where $A_3 = \{ h \in H_4 \colon \operatorname{supp} h \subseteq \operatorname{supp} e_2\}$.
The passage from $e_{n-1}$ to $e_n$ in the general case is
analogous to the passage from $e_2$ to $e_3$ above and is given by
a procedure which we now describe. let $I$ be a maximal dyadic
subinterval of $\operatorname{supp} e_{n-1}$. Let $I_1$, $I_2$,
$I_3$, and $I_4$ be the first, second, third, and fourth quarters
of $I$ ordered from left to right. To get $e_n$ from $e_{n-1}$, if
$I$ is contained in the support of the \textit{positive} part of
$e_{n-1}$  then  we replace $\1_I$ in the expression for $e_{n-1}$
by $\1_{I_1} + \1_{I_3}$, and if $I$ is contained in the support
of the \textit{negative} part of $e_{n-1}$  then we replace
$-\1_I$ in the expression for $e_{n-1}$  by $-(\1_{I_2} +
\1_{I_4})$. Expressing $e_n$ in terms of Haar functions, it
follows, by analogy with \eqref{eq: e_2} and \eqref{eq: e_3}
above, that
\begin{equation} \label{eq: e_n} e_n = 2e_{n-1} + 2^{2n-1} \sum_{h \in A_n} h, \end{equation}
where $A_n = \{h \in H_{2n-2} \colon \operatorname{supp} h \subseteq \operatorname{supp} e_{n-1}\}$. Iterating \eqref{eq: e_n} and
recalling that $e_1 = 4h_1$, we get
\begin{equation} \label{eq: e_nexpression}  e_n - 2^{n+1} h_1 \in \operatorname{span} (\cup_{k=1}^{n-1} H_{2k}). \end{equation}
\begin{lemma} \label{thm: Z(D_n)} For all $n \ge 1$, $$ \ell_1(D_n) = \operatorname{span}( \{h_0\} \cup (\cup_{k=0}^{2n-1} H_k))$$ and
\begin{equation} \label{eq: Z(D_n)} Z(D_n) = \operatorname{span} (\cup_{k=0}^{n-1} H_{2k}). \end{equation}

\end{lemma}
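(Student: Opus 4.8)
The plan is to prove both identities in Lemma \ref{thm: Z(D_n)} by induction on $n$, using the recursive identification of $\ell_1(D_n)$ with a subspace of $L_1[0,1]$ and the recursive structure $D_n = TL_n \cup BL_n \cup BR_n \cup TR_n$ of four copies of $D_{n-1}$. The first identity, that $\ell_1(D_n) = \operatorname{span}(\{h_0\} \cup (\cup_{k=0}^{2n-1} H_k))$, is really a dimension-and-containment statement: the edge vectors are the $4^n$ disjointly supported indicators $4^n \1_{((i-1)/4^n, i/4^n]}$, and the collection $\{h_0\} \cup (\cup_{k=0}^{2n-1} H_k)$ is precisely the standard Haar basis for the space of functions on $[0,1]$ that are constant on dyadic intervals of length $2^{-2n} = 4^{-n}$. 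Since both spaces have dimension $1 + \sum_{k=0}^{2n-1} 2^k = 4^n$ and the edge vectors are exactly such step functions, they span the same subspace; this is the routine, well-known fact that the Haar functions up to level $2n-1$ together with $h_0$ form a basis for dyadic step functions at scale $4^{-n}$.

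**The main identity** to establish is \eqref{eq: Z(D_n)}, and here I would induct. For the base case $n=1$, the cycle space $Z(D_1)$ is one-dimensional, spanned by $e_1 = 4h_1$, and $\cup_{k=0}^{0} H_{2k} = H_0 = \{h_1\}$, so the claim holds. For the inductive step, I would use the decomposition $D_n = TL_n \cup BL_n \cup BR_n \cup TR_n$ into four edge-disjoint copies of $D_{n-1}$. A cycle in $D_n$ either lives inside one of the four copies, or it is a ``large'' cycle that crosses between copies; the latter, after collapsing each copy to a single edge, projects to the unique cycle of $D_1$. This gives a direct sum decomposition
\[
Z(D_n) = \left( \bigoplus_{\text{4 copies}} Z(D_{n-1}) \right) \oplus \operatorname{span}(e_n),
\]
analogous to the type \textbf{(I)}/\textbf{(II)} basis decomposition in Lemma \ref{L:Basis}. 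The plan is to translate each piece under the embeddings $Q$ and the three translations by $\tfrac14, \tfrac12, \tfrac34$ into Haar language, applying the induction hypothesis to each copy.

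**The key computation** is to track how the Haar functions transform under these four isometric maps. The map $Q$ sends $h_j$ on level $\ell$ to a Haar function on level $\ell+2$ (because $Q$ rescales $[0,1]$ onto $[0,\tfrac14]$, a factor of $4 = 2^2$), and the translations move the support within the appropriate quarter; so the four copies of $\operatorname{span}(\cup_{k=0}^{n-2} H_{2k})$ assemble to exactly $\operatorname{span}(\cup_{k=1}^{n-1} H_{2k})$, the Haar functions at the even levels $2,4,\dots,2n-2$ supported on the four quarters. Combining this with the spanning contribution of $e_n$, which by \eqref{eq: e_nexpression} satisfies $e_n - 2^{n+1}h_1 \in \operatorname{span}(\cup_{k=1}^{n-1} H_{2k})$ and whose leading term recovers the level-$0$ function $h_1 \in H_0$, yields $\operatorname{span}(\cup_{k=0}^{n-1} H_{2k})$, as required.

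**The main obstacle** I anticipate is verifying cleanly that the four translated/rescaled copies of the even-level Haar functions fill out \emph{all} of $\cup_{k=1}^{n-1} H_{2k}$ with no overlaps and no omissions, i.e.\ that level $2k$ in a copy at scale $\tfrac14$ corresponds to level $2k+2$ globally and that the four quarters partition each even Haar level exactly. This is a bookkeeping matter about how Haar levels shift under $Q$ and the dyadic translations, and the parity ($2k \mapsto 2k+2$ preserves evenness, so odd global levels never appear from the copies) is precisely what keeps the even-level structure intact; the large cycle $e_n$ must then be checked to supply exactly the missing level-$0$ generator $h_1$ without introducing anything on an odd level, which is guaranteed by \eqref{eq: e_n} since each $A_n \subseteq H_{2n-2}$ sits on an even level.
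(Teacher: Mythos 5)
Your proposal is correct and follows essentially the same route as the paper: induction on $n$, the decomposition of $Z(D_n)$ as the span of the four copies of $Z(D_{n-1})$ together with the outer cycle $e_n$, the observation that dilation/translation shifts even Haar levels to even Haar levels so the four copies fill out $\operatorname{span}(\cup_{k=1}^{n-1}H_{2k})$, and finally \eqref{eq: e_nexpression} to show $e_n$ contributes exactly the $h_1$ generator. The only cosmetic difference is that you state the four-copies-plus-$e_n$ decomposition as a direct sum rather than merely a spanning set, which is true but not needed.
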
 \begin{proof} The description of $\ell_1(D_n)$ follows from the observation above that the edge vectors of $\ell_1(D_n)$ are the functions $4^n \1_{[(i-1)/4^n, i/4^n]}$ for $1 \le i \le 4^n$.

We prove \eqref{eq: Z(D_n)} by induction. Note that
$$Z(D_1) = \operatorname{span}(\{h_1\}) = \operatorname{span}(H_0),$$
which verifies the base case $n=1$. So suppose that $n \ge 2$ and that the result holds for $n-1$. Note that \begin{equation}
\label{eq: span1}
Z(D_n) = \operatorname{span}(Z(TL_n) \cup Z(BL_n) \cup Z(BR_n) \cup Z(TR_n) \cup \{e_n\}). \end{equation}
Recall that $TL_n$, $BL_n$, $BR_n$, and $TR_n$ are translated and dilated copies of $D_{n-1}$ on the intervals $[(i-1)/4]$ for $1 \le i \le 4$.
Hence  $Z(TL_n)$, $Z(BL_n)$, $Z(BR_n)$, and $Z(TR_n)$ are translated and  dilated copies of $Z(D_{n-1})$ on the intervals $[(i-1)/4]$ for $1 \le i \le 4$. Applying the inductive hypothesis  to $Z(D_{n-1})$ it follows that \begin{equation} \label{eq: span2}
\operatorname{span}(Z(TL_n) \cup Z(BL_n) \cup Z(BR_n) \cup Z(TR_n)) = \operatorname{span} (\cup_{k=1}^{n-1} H_{2k}). \end{equation}
Finally, from \eqref{eq: span1},   \eqref{eq: span2} and \eqref{eq: e_nexpression}, we get \begin{align*}
Z(D_n) &= \operatorname{span} (\{e_n\} \cup(\cup_{k=1}^{n-1} H_{2k}))\\ &= \operatorname{span} (\{h_1\} \cup(\cup_{k=1}^{n-1} H_{2k}))\\
&= \operatorname{span} (\cup_{k=0}^{n-1} H_{2k}).
\end{align*} \end{proof}
\begin{remark} Note that $Z(D_n)$ has dimension $\sum_{k=0}^{2n-2} 4^k = (4^{2n-1}-1)/3$. This can also be seen directly
without using Lemma~\ref{thm: Z(D_n)} since \eqref{eq: span1}
clearly implies that $\operatorname{dim} Z(D_n) = 4
\operatorname{dim} Z(D_{n-1}) + 1$.  Using this observation that
the spaces have the same dimension,  it suffices to show that
$Z(D_n) \subseteq \operatorname{span} (\cup_{k=0}^{n-1} H_{2k})$,
which follows from \eqref{eq: e_nexpression} and  \eqref{eq:
span2}. Thus the proof can be concluded slightly
differently.\end{remark} Our next goal is to prove that $Z(D_n)$
is not well-complemented in $\ell_1(D_n)$. This essentially
follows from a result of Andrew \cite{And78}. (Note that the idea
of using the average over the group of isometries to estimate
norms of projections goes back at least to Gr\"unbaum \cite{Gru60}
and Rudin \cite{Rud62}.) For completeness we present a slight
generalization of Andrew's elegant argument. Let
 $X_n = \operatorname{span} ( \{h_i \colon 0 \le i \le 2^{n+1}-1\}) = \operatorname{span}(\{h_0\}\cup (\cup_{k=0}^n H_k))$. Let $(\cdot,\cdot)$ denote the usual inner product in $L_2[0,1]$. Orthogonality will refer to this inner product.

 Suppose $i \ge 1$ and that $h_i \in H_k$. Define a linear isomorphism $g_i \colon X_n \rightarrow X_n$ by
\begin{equation*}  (g_if)(t) = \begin{cases}  f(t), &t \notin \operatorname{supp} h_i,\\
f(t+ 2^{-k-1}), &t \in h_i^{-1}(1),\\
f(t- 2^{-k-1}), &t \in h_i^{-1}(-1) \end{cases}
 \end{equation*} for all $f \in X_n$. Suppose now that $\| \cdot \|$ is any norm on $X_n$ with the property that each $g_i$ acts as  a linear isometry of $(X_n, \|\cdot\|)$. For our purposes,  $\|\cdot\|$ will be the usual norm of $L_1[0,1]$ or of $L_\infty[0,1]$.
Let $G$ be the group of isometries generated by $(g_i)_{i \ge 1}$. Note that $G$ is finite.

In the next proposition it is convenient to set $H_{-1} :=
\{h_0\}$.

\begin{lemma} \label{prop: Andrew} Let $A$ be any nonempty subset of $\{-1,0,1,\dots,n\}$ and let $P$ be any linear projection on $(X_n, \|\cdot \|)$
with range $Y := \operatorname{span} (\cup_{k \in A} H_k)$. Then $\|P\| \ge \|P_Y\|$, where $P_Y$ is the orthogonal projection onto $Y$.
\end{lemma}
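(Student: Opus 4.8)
The plan is to run the Grünbaum--Rudin averaging device over the finite group $G$. Starting from an arbitrary projection $P$ of $(X_n,\|\cdot\|)$ with range $Y$, I would form
\[
P_G:=\frac1{|G|}\sum_{g\in G}g^{-1}Pg .
\]
Because every $g\in G$ is a $\|\cdot\|$-isometry by hypothesis, $\|P_G\|\le\|P\|$. Once we know that each $g$ preserves $Y$, every $g^{-1}Pg$ is again a projection with range $Y$, so $P_G$ is a projection onto $Y$ commuting with all of $G$. The whole lemma then reduces to the single claim $P_G=P_Y$, since this gives $\|P_Y\|=\|P_G\|\le\|P\|$.

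The structural input is the action of the generators on the Haar basis. For $h_i\in H_m$ the map $g_i$ is the composition operator of the measure-preserving involution of $[0,1]$ that interchanges the two halves of $\operatorname{supp}h_i$; hence $g_i$ fixes $h_0$, sends $h_i$ to $-h_i$, and carries every other $h_j$ to $\pm h_{j'}$ with $h_{j'}$ on the same level as $h_j$. In particular each $g_i$ preserves every block $\operatorname{span}(H_k)$, and therefore preserves $Y=\operatorname{span}(\cup_{k\in A}H_k)$, which is exactly what the first paragraph needs. Being composition with a measure-preserving map, $g_i$ is also an $L_2$-isometry, so $gP_Yg^{-1}$ is the orthogonal projection onto $gY=Y$; thus $P_Y$ commutes with $G$ as well. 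Both $P_G$ and $P_Y$ are therefore $G$-invariant projections onto $Y$.

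It remains to prove that such an invariant projection is \emph{unique}, i.e.\ that the only $G$-equivariant map $D\colon X_n\to Y$ vanishing on $Y$ is $D=0$. Using the $G$-module decomposition $X_n=\operatorname{span}\{h_0\}\oplus\bigoplus_{k=0}^n\operatorname{span}(H_k)$, this comes down to showing that any $G$-equivariant map between two distinct blocks is zero. The separating device is the elementary abelian subgroup $N_l=\langle g_j:h_j\in H_l\rangle$: on $\operatorname{span}(H_l)$ it acts by independent coordinate sign-flips, so its only fixed vector is $0$, while on every coarser block $\operatorname{span}(H_k)$ with $k<l$ it acts trivially, since a finer swap leaves the locally constant coarse Haar functions unchanged. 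For equivariant $D\colon\operatorname{span}(H_k)\to\operatorname{span}(H_l)$ with $k<l$, triviality of $N_l$ on the source gives $D(v)=gD(v)$ for all $g\in N_l$, so $D(v)$ is $N_l$-fixed in $\operatorname{span}(H_l)$ and hence $0$; the case $k>l$ is symmetric, the relation $D(gv)=D(v)$ for $g\in N_k$ forcing $D$ to kill every $h_i\in H_k$, and the $h_0$-block is handled identically. This yields $P_G=P_Y$ and finishes the proof.

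The delicate point, and the genuine content beyond the formal averaging, is precisely this level-separation step. The naive attempt to distinguish the blocks by the \emph{level-$k$} flips fails, because those same sign characters already occur inside the permutation action of the coarser generators on $\operatorname{span}(H_l)$; what rescues the argument is to test with the \emph{finer} group $N_l$, which is genuinely trivial on all coarser blocks and fixed-point-free on its own. Confirming these two properties of $N_l$ from the explicit formula for $g_j$ is the crux; the rest is routine Grünbaum--Rudin bookkeeping.
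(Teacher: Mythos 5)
Your proof is correct, and its skeleton --- Gr\"unbaum--Rudin averaging over the finite group $G$ followed by identifying the resulting invariant projection with $P_Y$ --- is the same as the paper's; the two arguments differ only in how that identification is organized. The paper works directly with the averaged projection $Q$: for $h_j \notin Y$ and $h_i \in Y$ it derives $(Qh_j,h_i)=-(Qh_j,h_i)$ by commuting $Q$ past a single well-chosen generator (the finer of $g_i, g_j$, which is exactly the case split you describe), using five elementary observations about how the $g_i$ act on Haar functions and on inner products; notably, this route never needs to know that $P_Y$ itself is $G$-equivariant. You instead prove an abstract uniqueness statement: both $P_G$ and $P_Y$ are $G$-invariant projections onto $Y$ (for $P_Y$ you need, and correctly supply, the $L_2$-unitarity of the $g_i$), and their difference vanishes because every $G$-equivariant map between distinct Haar-level blocks is zero, which you establish via the level subgroups $N_l$ --- fixed-point-free on their own block, trivial on all coarser ones. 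The underlying mechanism, pairing a sign flip on the finer level against triviality on the coarser level, is identical in both proofs; what your packaging buys is a cleanly isolated structural fact (vanishing of equivariant cross-maps between blocks, hence uniqueness of the invariant projection), at the modest cost of the extra verifications that the block decomposition is $G$-invariant, that the coordinate projections are equivariant, and that $P_Y$ commutes with $G$.
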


\begin{proof} Let $$ Q = \frac{1}{ \operatorname{card}(G)}\sum_{g \in G} g^{-1}Pg.$$ Clearly $\|Q\| \le \|P\|$. Moreover $Q$ is a projection onto
$Y$ since $g(Y) = Y$ for all $g \in G$. It suffices to show that $Q = P_Y$. The proof of this makes use of the following observations:
\begin{enumerate} \item $gQ = Qg$ for all $g \in G$;
  \item $g_i h_i = -h_i$ for all $i \ge 1$;
  \item $(g_i f, h_i) = -(f, h_i)$ for all $f \in X_n$ and for all $i \ge 1$;
  \item If  $0 \le i < j$ and $\operatorname{supp} h_j \subset \operatorname{supp} h_i$  then $(g_j f, h_i) = (f, h_i)$ for all $f \in X_n$;
  \item If $i > j \ge0$ or if  $h_i$ and $h_j$ are disjointly supported then  $g_i h_j = h_j$.
  \end{enumerate}
Suppose that $h_j \notin Y$. We have to show that $Qh_j =0$. Since $Q$ is a projection onto $Y$, it suffices to show that if $h_i \in Y$ then
$(Qh_j, h_i)=0$. If $0 \le i < j$ and $\operatorname{supp} h_j \subset \operatorname{supp} h_i$ then
\begin{align*} (Qh_j, h_i) &= (g_jQh_j, h_i) \qquad\text{(by ($4$))}\\
&= (Qg_jh_j,h_i) \qquad\text{(by ($1$))}\\
&= -(Qh_j, h_i) \qquad \text{(by ($2$))}.\end{align*}
Hence $(Qh_j, h_i)=0$ in this case. Now suppose that $i > j\ge 0$ or that $h_i$ and $h_j$ are disjointly supported. Then
\begin{align*}(Qh_j, h_i) &= (Qg_ih_j, h_i) \qquad\text{(by ($5$))}\\
&= (g_iQh_j, h_i) \qquad\text{(by ($1$))}\\
&= -(Qh_j, h_i) \qquad\text{(by ($3$))}. \end{align*}
So $(Qh_j, h_i)=0$.
 \end{proof}
\begin{lemma}\label{C:LargePrConst} Let $P$ be a projection from $\ell_1(D_n)$ onto $Z(D_n)$. Then  $\|P\| \ge (2n+1)/3$. \end{lemma}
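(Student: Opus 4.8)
The plan is to combine the two preceding lemmas. Lemma~\ref{thm: Z(D_n)} identifies $\ell_1(D_n)$ with $X_{2n-1}=\operatorname{span}(\{h_0\}\cup(\cup_{k=0}^{2n-1}H_k))$ and $Z(D_n)$ with $\operatorname{span}(\cup_{k=0}^{n-1}H_{2k})$, i.e.\ the span of the \emph{even} Haar levels $H_0,H_2,\dots,H_{2n-2}$. Taking $A=\{0,2,\dots,2n-2\}\subseteq\{-1,0,\dots,2n-1\}$ in Lemma~\ref{prop: Andrew} (with $2n-1$ in place of $n$ and with $\|\cdot\|$ the $L_1[0,1]$-norm, for which each $g_i$ is a measure-preserving isometry), every projection $P$ of $\ell_1(D_n)$ onto $Z(D_n)$ satisfies $\|P\|\ge\|P_Y\|$, where $P_Y$ is the \emph{orthogonal} projection onto $Z(D_n)$ and the norm is the $L_1\to L_1$ operator norm. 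Thus it suffices to prove $\|P_Y\|_{L_1\to L_1}\ge(2n+1)/3$.

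First I would record that, writing $E_m$ for the conditional expectation onto the dyadic intervals of length $2^{-m}$, the orthogonal projection onto $H_k$ is the martingale difference $E_{k+1}-E_k$, so that $P_Y=\sum_{j=0}^{n-1}(E_{2j+1}-E_{2j})$. To bound its $L_1$-norm from below I would feed it the single innermost edge vector $f=4^n\1_{(0,4^{-n}]}=2^{2n}\1_{(0,2^{-2n}]}$, which has $\|f\|_1=1$. Since $E_mf=2^m\1_{(0,2^{-m}]}$ for $0\le m\le 2n$, the image is the explicit piecewise-constant function
\[
g:=P_Yf=\sum_{j=0}^{n-1}\bigl(2^{2j+1}\1_{(0,2^{-(2j+1)}]}-2^{2j}\1_{(0,2^{-2j}]}\bigr).
\]

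The computational heart of the argument is to evaluate $\|g\|_1$. I would determine $g$ on each dyadic ring $(2^{-(m+1)},2^{-m}]$ by tracking which terms contribute their positive and which their negative half: this shows $g\equiv(4^{i+1}-1)/3$ on the odd rings $m=2i+1$ (length $2^{-(2i+2)}$) and $g\equiv-(2\cdot4^i+1)/3$ on the even rings $m=2i$ (length $2^{-(2i+1)}$), for $0\le i\le n-1$, the innermost ring $(0,2^{-(2n-1)}]$ being where all terms are positive. Summing the three resulting geometric series gives $\|g\|_1=\frac{6n+4}{9}-\frac{4}{9}4^{-n}$, which is $\ge\frac{2n+1}{3}=\frac{6n+3}{9}$ precisely because $\frac19\ge\frac49 4^{-n}$ for $n\ge1$. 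Hence $\|P_Y\|_{L_1\to L_1}\ge\|g\|_1\ge(2n+1)/3$, and the lemma follows from the reduction in the first paragraph.

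The only real obstacle is the bookkeeping in the third paragraph: one must track, ring by ring, whether a point lies in the positive or negative half of each term $2^{2j+1}\1_{(0,2^{-(2j+1)}]}-2^{2j}\1_{(0,2^{-2j}]}$, and take care that the innermost ring (where all terms are in their positive part) merges cleanly with the last odd ring. Everything else is a routine geometric summation, and the slack between the exact value $\frac{6n+4}{9}$ and the target $\frac{6n+3}{9}$ confirms that this single test function already suffices.
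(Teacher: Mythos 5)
Your proposal is correct and follows essentially the same route as the paper: reduce to the orthogonal projection via Lemma~\ref{prop: Andrew} with $A=\{0,2,\dots,2n-2\}$, then bound its $L_1\to L_1$ norm from below on a single normalized test vector. The only cosmetic difference is that you feed it the innermost edge vector $2^{2n}\1_{(0,2^{-2n}]}$ while the paper uses $2^{2n-1}\1_{(0,2^{-(2n-1)}]}$ (written as a sum of first Haar functions of each level); both project to the same element $\sum_{j=0}^{n-1}4^jh_{4^j}$, whose norm the paper calls ``easily seen'' and you compute explicitly and correctly.
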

\begin{proof} By Theorem~\ref{thm: Z(D_n)}, we have  $\ell_1(D_n)=\operatorname{span}( \{h_0\} \cup (\cup_{k=0}^{2n-1} H_k))$ and
$Z(D_n)=\operatorname{span}(\cup_{k=0}^{n-1} H_{2k})$. By Proposition~\ref{prop: Andrew} it suffices to show that the orthogonal projection
$Q$ satisfies $\|Q\| \ge (2n+1)/3$.  This is well-known but for completeness we recall the proof. Consider
$$ f = h_0 + h_1 + 2h_2+ 2^2 h_4 +\dots + 2^{2n-2}h_{2^{2n-2}}.$$
Note that $f$ is the sum over the first Haar functions (normalized in $L_1[0,1]$) in each level. Then
$$Qf = h_1 + 2^2h_4 + 2^4h_{16} +\dots+ 2^{2n-2}h_{2^{2n-2}}.$$
It is easily seen that $\|f\| = 1$ and $\|Qf\| \ge (2n+1)/3$.
\end{proof} \begin{theorem}\label{C:LFDnNotL1} The Banach-Mazur distance $d$ from the Lipschitz free space $LF(D_n)$ to the $\ell_1^N$ space of the same dimension satisfies $$4n+4 \ge d \ge (2n+1)/3.$$ \end{theorem}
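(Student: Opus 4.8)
The plan is to prove the two inequalities separately, working throughout with the realization $\lf(D_n)=\ell_1(D_n)/Z(D_n)$ from \eqref{E:LFunweigh} and the Haar description of Lemma~\ref{thm: Z(D_n)}: one has $\ell_1(D_n)=\operatorname{span}(\{h_0\}\cup(\cup_{k=0}^{2n-1}H_k))$, while $Z(D_n)=\operatorname{span}(\cup_{k=0}^{n-1}H_{2k})$ is the span of the \emph{even} Haar levels. The structural observation driving both bounds is that the \emph{odd} levels together with $h_0$ form a natural complement of $Z(D_n)$ in $\ell_1(D_n)$, and it is this complement that should carry the copy of $\ell_1^N$, where $N=\dim\lf(D_n)$.

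For the upper bound $d\le 4n+4$ I would set $W=\operatorname{span}(\{h_0\}\cup(\cup_{k=0}^{n-1}H_{2k+1}))$, which has dimension $N$ and is a complement of $Z(D_n)$, so the quotient map restricts to a linear isomorphism $W\to\lf(D_n)$. Define $T\colon\lf(D_n)\to\ell_1^N$ by sending the class of $w\in W$ to its coordinate vector in the $L_1$-normalized Haar basis of $W$. The inequality $\|T^{-1}\|\le 1$ is immediate from the triangle inequality, since $\|w+Z(D_n)\|\le\|w\|_{L_1}\le\sum_i|\tilde c_i|$. For $\|T\|$ I would use that the functionals biorthogonal to the normalized basis of $W$ are the Haar functions $h_i$ themselves (by $L_2$-orthogonality), that each such $h_i$ with $i\in\{0\}\cup(\cup_k H_{2k+1})$ annihilates $Z(D_n)$ and hence descends to a norm-one functional on $\lf(D_n)$, and that at most $n+1$ of these functions are nonzero at any point of $[0,1]$ (one per level, over the $n$ odd levels plus $h_0$). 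Testing $w+Z(D_n)$ against $\sum_i\operatorname{sign}(\tilde c_i)\,h_i$, whose $L_\infty$-norm is at most $n+1$, yields $\|w+Z(D_n)\|\ge (n+1)^{-1}\sum_i|\tilde c_i|$, so $\|T\|\le n+1$ and $d\le n+1\le 4n+4$.

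For the lower bound $d\ge(2n+1)/3$ I would pass to the dual. By \eqref{E:LFdual}, $\lf(D_n)^\ast=\lip_0(D_n)=Z(D_n)^\perp$ inside $\ell_1(D_n)^\ast$, and the same orthogonality of the Haar system identifies $Z(D_n)^\perp$ with the level-span $\operatorname{span}(\{h_0\}\cup(\cup_{k=0}^{n-1}H_{2k+1}))$, now equipped with the $L_\infty$-norm. If $d_{BM}(\lf(D_n),\ell_1^N)=d$, then $Z(D_n)^\perp$ is $d$-isomorphic to $\ell_\infty^N$; since $\ell_\infty^N$ is $1$-injective, an extension-of-functionals argument produces a projection of $\ell_1(D_n)^\ast$ onto $Z(D_n)^\perp$ of norm at most $d$. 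Now the Gr\"unbaum--Rudin--Andrew averaging of Lemma~\ref{prop: Andrew}, applied with the $L_\infty$-norm to the level-span $Z(D_n)^\perp$, shows that every such projection has norm at least that of the orthogonal projection onto the odd levels; a test-function computation identical in spirit to Lemma~\ref{C:LargePrConst} (applied to the peak function and its odd-level part) bounds this orthogonal-projection norm below by $(2n+1)/3$. Hence $d\ge(2n+1)/3$.

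The main subtlety, and the step I would be most careful about, is precisely this transfer of the projection-constant estimate to a Banach--Mazur lower bound without losing the additive constant. The crude route through Fact~\ref{F:Lift} applied to $\lambda(Z(D_n),\ell_1(D_n))\ge(2n+1)/3$ (Lemma~\ref{C:LargePrConst}) would only deliver $d\ge(2n+1)/3-1$; working instead on the injective ($L_\infty$) side of the duality, where $Z(D_n)^\perp$ is genuinely a span of Haar levels and the $1$-injectivity of $\ell_\infty^N$ gives a norm-$d$ projection with no triangle-inequality loss, is what preserves the exact constant $(2n+1)/3$. This is also why the averaging lemma was deliberately stated for an arbitrary norm making the maps $g_i$ isometric, covering both $L_1$ and $L_\infty$ simultaneously.
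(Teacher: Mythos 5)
Your proof is correct, and on the lower bound --- the substantive direction --- you have reconstructed the paper's argument almost exactly: the paper likewise identifies $\lf(D_n)^*=Z(D_n)^\perp$ with $\operatorname{span}(\{h_0\}\cup(\cup_{k=1}^{n}H_{2k-1}))$ in the $L_\infty$ norm via the $L_2$ pairing, uses the fact that $\ell_\infty^N$ is contractively complemented in any superspace to turn a Banach--Mazur bound into a projection bound with no additive loss, and then combines Lemma~\ref{prop: Andrew} (applied with the $L_\infty$ norm) with a peak-function computation as in Lemma~\ref{C:LargePrConst}. Your closing remark about why one must not route through Fact~\ref{F:Lift} is precisely the reason the paper dualizes. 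Two details you should write out if you expand the sketch: (i) the peak function is $L_1$-normalized, so the computation bounds $\|Q\|_{L_1\to L_1}$ for the orthogonal projection $Q$ onto the odd levels; to convert this into the needed bound on $\|Q\|_{L_\infty\to L_\infty}$ you must invoke the self-adjointness of $Q$ (the paper makes this step explicit only in the multibranching case, Theorem~\ref{T:LFmultL1}); (ii) you need the \emph{full} peak function $h_0+h_1+2h_2+\cdots+2^{2n-1}h_{2^{2n-1}}$, i.e.\ the $L_1$-normalized indicator of the leftmost interval of length $4^{-n}$ (equivalently, the first edge vector), including the top level $H_{2n-1}$: projecting the truncated peak of Lemma~\ref{C:LargePrConst} onto the odd levels yields only about $\frac{2n}{3}+\frac19$, just short of $(2n+1)/3$, whereas the full peak yields at least $(2n+2)/3$.

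On the upper bound your route differs from the paper's and is in fact sharper. The paper decomposes $x=\sum_k x_k$ by levels, uses monotonicity of the level decomposition in the quotient norm (costing a factor $2$) and $2$-equivalence of each level to an $\ell_1$ basis (costing another factor $2$), arriving at $4n+4$. You instead test the whole expansion at once against $\sum_i\operatorname{sign}(\tilde c_i)\,h_i$, which annihilates $Z(D_n)$ by orthogonality of Haar levels and has $L_\infty$ norm at most $n+1$ because the Haar functions within a single level are disjointly supported; together with the biorthogonality $(2^k h_i,h_j)=\delta_{ij}$ for $h_i\in H_k$, this gives $\|T\|\le n+1$ and $\|T^{-1}\|\le 1$, hence $d\le n+1$. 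This avoids the monotonicity argument entirely and improves the theorem's upper estimate from $4n+4$ to $n+1$, so it is a genuine strengthening rather than merely an alternative proof.
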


\begin{proof} Let $X_n = \operatorname{span}(\{h_0\} \cup (\cup_{k=0}^{2n-1} H_k))$ Using the inner product in $L_2[0,1]$ we may identify $\ell_1(D_n)^*$ with $(X_n, \|\cdot\|_\infty)$. Under this identification
$Z(D_n)^\perp = \operatorname{span}(\{h_0\}\cup(\cup_{k=1}^{n} H_{2k-1}))$. A calculation similar to that of the previous result,
but now using the $L_\infty$ norm, shows   that any projection $P$ from $(X_n, \|\cdot\|_\infty)$ onto $Z(D_n)^\perp$ satisfies $\|P\| \ge (2n+1)/3$. Since an $\ell_\infty^N$ space is contractively complemented in any superspace, it follows that the Banach-Mazur distance from  $LF(D_n)^*=Z(D_n)^\perp$ to an $\ell_\infty^N$ space
is at least $(2n+1)/3$. Dualizing again gives $d \ge (2n+1)/3$.

To get the upper estimate, note that  $\{h_0\}\cup(\cup_{k=1}^{n} H_{2k-1})$ is a monotone basis for $LF(D_n)$  in the quotient norm of $LF(D_n)$ and that
 $\{2^{2k-1}h_i \colon h_i \in H_{2k-1}\}$
is $2$-equivalent to the unit vector basis of the $\ell_1^N$ space of the same dimension.  Let $x \in LF(D_n)$ and write
$ x = \sum_{k=0}^n x_k$, where $x_0 \in \operatorname{span}(\{h_0\})$ and $x_k \in \operatorname{span}(H_{2k-1})$. Then
$$ \sum_{k=0}^n \|x_k\| \ge \|x\| \ge \frac{1}{2} \max_{0 \le k \le n} \|x_k\| \ge \frac{1}{2n+2} \sum_{k=0}^n \|x_k\|,$$
which gives $d \le 4n+4$.
\end{proof}

\begin{problem} Do $\{LF(D_n)\}$ admit embeddings into $\ell_1$ with uniformly
bounded distortions?\end{problem}

\begin{problem} Do $\{\ell_\infty^k\}$ admit embeddings into
$\{LF(D_n)\}$ with uniformly bounded distortions?\end{problem}

\begin{problem} Are $\{Z(D_n)\}_{n=1}^\infty$ uniformly isomorphic to $\{\ell_1^{k(n)}\}_{n=1}^\infty$ of the corresponding dimensions?  This is a finite version of the longstanding  open question as to
 whether  the even levels of the Haar system in $L_1[0,1]$ span a subspace isomorphic to $L_1$  \cite{MOP05}. \end{problem}

\begin{remark} It is curious that the subspaces generated by all the even/odd levels of the Haar functions appear in the study of quasi-greedy basic sequences in $L_1[0,1]$. The notion of quasi-greedy bases, which generalizes unconditional bases, was introduced by S. Konyagin and V. Temlyakov \cite{KT}, see also \cite{DKKT}]. Although the Haar basis is not quasi-greedy in $L_1[0,1]$ \cite{DKW02}, S. Gogyan \cite{Gog05} showed the subsequence consisting of all Haar functions from the even/odd levels is a quasi-greedy subsequence in $L_1[0,1]$. \end{remark}

Finally, we generalize the argument  to handle  the multi-branching diamond graphs $D_{n,k}$. The proof is similar to the case $k=2$, so we omit some of the  details.

\begin{theorem}\label{T:LFmultL1} The Banach-Mazur distance $d_{n,k}$ from the Lipschitz free space $LF(D_{n,k})$ to the $\ell_1^N$ space of the same dimension satisfies $$4n+4 \ge d_{n,k} \ge \frac{k-1}{2k}n.$$ \end{theorem}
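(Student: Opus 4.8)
The plan is to generalize the Haar-function machinery of the case $k=2$ by building a $2k$-adic analogue of the Haar system. First I would identify $\ell_1(D_{n,k})$ with a subspace of $L_1[0,1]$ recursively, sending each edge of $D_{n,k}$ to the normalised indicator of a $2k$-adic interval of length $(2k)^{-n}$, where $D_{n,k}=D_{n-1,k}\oslash K_{2,k}$ and the replacement of one edge by $K_{2,k}$ splits its interval into $2k$ equal pieces grouped into $k$ \emph{path-intervals} of two pieces each. At each such splitting the $(2k-1)$-dimensional space of mean-zero step functions decomposes orthogonally into a $(k-1)$-dimensional \emph{cycle part} (functions constant on the $k$ path-intervals, with mean zero over the edge) and a $k$-dimensional \emph{within-path part} (the Haar-type differences of the two pieces inside each path-interval). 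Mirroring Lemma~\ref{thm: Z(D_n)}, I would prove by induction that $Z(D_{n,k})$ is exactly the span of all cycle-type functions over the $n$ levels $m=0,\dots,n-1$, while the within-path functions together with the constant $h_0$ span a complement; the resulting count $\dim Z(D_{n,k})=(k-1)\frac{(2k)^n-1}{2k-1}$ agrees with $|E|-|V|+1$.

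For the lower bound I would run the Gr\"unbaum--Rudin--Andrew averaging of Proposition~\ref{prop: Andrew}. The relevant finite group $G$ is generated by two families of measure-preserving rearrangements of $[0,1]$: the permutations of the $k$ path-intervals inside each refined edge (a copy of $S_k$ per edge) and the swaps of the two pieces inside each path-interval (a copy of $\mathbb{Z}_2$). Each generator is an isometry of both $(X,\|\cdot\|_1)$ and $(X,\|\cdot\|_\infty)$, fixes every cycle-type function coarser than it, and negates the within-path or cycle function it directly acts on; this suffices to push through the analogues of observations (1)--(5), so that averaging any projection onto $Z(D_{n,k})$ returns the orthogonal projection $Q$. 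It then suffices to bound $\|Q\|$ from below, and the clean test object is the normalised spike $f=(2k)^n\1_e$ supported on a single deepest edge $e$ (the exact analogue of the collapsed sum $2^{2n-1}\1_{[0,2^{-(2n-1)}]}$ in Lemma~\ref{C:LargePrConst}). A direct computation gives, for the ancestor edge $e^{(m)}\supset e$ at level $m$, that $Q_mf=(2k)^m\bigl(k\,\1_{P}-\1_{e^{(m)}}\bigr)$, where $P$ is the path-interval of $e^{(m)}$ containing $e$; summing $|Qf|$ over the pairwise disjoint shells $e^{(m)}\setminus P$ yields $\|Qf\|_1\ge\frac{k-1}{2k-1}\,n-O(1)\ge\frac{k-1}{2k}\,n$. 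Feeding $\lambda(Z(D_{n,k}),\ell_1(D_{n,k}))\ge\frac{k-1}{2k-1}n-O(1)$ into Fact~\ref{F:Lift} absorbs the additive loss and gives $d_{n,k}\ge\frac{k-1}{2k}n$; alternatively one repeats the estimate in the $\|\cdot\|_\infty$ norm on the dual side, using that $\ell_\infty^N$ is contractively complemented in any superspace exactly as in Theorem~\ref{C:LFDnNotL1}.

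For the upper bound, which the theorem asserts is the same $4n+4$ as for $k=2$, I would use the complement basis $\{h_0\}\cup\{\text{within-path functions}\}$, grouped into the $n+1$ levels $m=-1,0,\dots,n-1$. The key observation is that within a fixed level all within-path functions have pairwise disjoint supports (each lives on one path-interval), so they are isometric to the unit vector basis of $\ell_1$ in $\ell_1(D_{n,k})$ and, as in Theorem~\ref{C:LFDnNotL1}, at most $2$-equivalent to it in the quotient norm of $\lf(D_{n,k})$. Since the level-decomposition is a monotone basis, writing $x=\sum_m x_m$ gives $\sum_m\|x_m\|\ge\|x\|\ge\frac12\max_m\|x_m\|\ge\frac1{2n+2}\sum_m\|x_m\|$, whence $d_{n,k}\le 4n+4$ independently of $k$.

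The main obstacle is the averaging step. Unlike the binary case, where each Haar support carries a single $\mathbb{Z}_2$ reflection, here each refined edge carries a full $S_k$ acting on its path-intervals, and one must verify that averaging over the group generated by all these copies of $S_k$ together with the within-path $\mathbb{Z}_2$'s kills precisely the non-orthogonal part of an arbitrary projection. Concretely, for a cycle-type $h\in Z$ and a complementary generator $h'\notin Z$ one must exhibit, in every configuration of their supports (nested either way, comparable, or disjoint), a single group element that negates one of $h,h'$ while fixing the other; the delicate configuration is $\operatorname{supp}h\subsetneq\operatorname{supp}h'$, which I would handle by a path-transposition deep inside $\operatorname{supp}h$ (this negates $h$ yet fixes $h'$, since $h'$ is constant that far down). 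The remaining work is bookkeeping: confirming the disjointness and $2$-equivalence of the within-path functions in the quotient norm, and that the slack in the constant $\frac{k-1}{2k-1}$ covers the additive loss in Fact~\ref{F:Lift}.
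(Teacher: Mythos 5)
Your proposal is correct and follows essentially the same route as the paper: the same $2k$-adic identification of the edge space inside $L_1[0,1]$, the same orthogonal splitting into cycle-type and within-path (cut) functions, the same Gr\"unbaum--Rudin--Andrew averaging reducing an arbitrary projection to the orthogonal one, the same single-edge spike as test vector, and the same monotone-basis argument for the upper bound $4n+4$. The one caveat is quantitative: your primary route (projecting onto $Z(D_{n,k})$ in the $\ell_1$ norm and invoking Fact~\ref{F:Lift}) loses an additive constant that is \emph{not} absorbed by the slack $\frac{k-1}{2k-1}-\frac{k-1}{2k}=\frac{k-1}{2k(2k-1)}$ unless $n\gtrsim 4k$, so to get the stated bound for all $n$ and $k$ you must rely on what you call the ``alternative'' --- the $\|\cdot\|_\infty$ estimate for projections onto the cut space $Z(D_{n,k})^\perp$ followed by duality and the contractive complementation of $\ell_\infty^N$ --- which is precisely the argument the paper itself uses.
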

\begin{proof}
It will be convenient to identify the edge space of $D_{n,k}$ with
a subspace of $L_1[0,1]$ as follows.  For $n=1$ and $1 \le j \le
k$ we identify the pair of  edge vectors of the $j^{th}$ path of
length $2$ from $u$ to $v$ with the $L_1$-normalized indicator
functions $2k \1_{(j-1)/k, (2j-1)/(2k)]}$ and $2k
\1_{((2j-1)/(2k), j/k]}$. For $n \ge2$,  the edge space of
$D_{n,k}$ is obtained from that of $D_{n,k-1}$ by subdividing the
intervals corresponding to edge vectors of $D_{n,k-1}$ into $2k$
subintervals each of length $(2k)^{-n}$.  Each of the $k$
consecutive disjoint  pairs of $L_1$-normalized indicator
functions of the subintervals corresponds to each pair of  edge
vectors of the $k$ paths of length $2$ from the $u$ and $v$
vertices of the copy of $D_{1,k}$ which  replaces the edge vector
of $D_{n-1,k}$   corresponding to the interval of length
$(2k)^{n-1}$ which is  subdivided. We have now identified
 the  edge vectors of $D_{n,k}$ with the normalized indicator functions
$$ e_{n,j} = (2k)^n \1_{((j-1)/(2k)^n, j/(2k)^n]} \quad (1 \le j \le (2k)^n).$$

Arguing as  in the case $k=2$, one can show that a   basis for the cycle space corresponds to the $L_\infty$-normalized system $\cup_{i=1}^n \{ g_{i,j}  \colon 1 \le j \le
(2k )^{i-1}(k-1)\}$, where, setting $j = a(k-1)+b$ with $0 \le a \le (2k)^{i-1}-1$ and $1 \le b \le k-1$,
$$g_{i,j} =(2k)^{-i}( e_{a2^k+2b-1} + e_{a 2^k + 2b} - e_{a 2^k + 2b + 1}- e_{a2^k+2b}).$$
Note that for $k=2$ this agrees with the previous description of the cycle space of $D_{n,2}$ in terms of alternate levels of the Haar system. But for $k \ge 3$, note that $g_{i,j}$ \textit{ overlaps}  with $g_{i,j+1}$  when $b \le k-2$, and hence this is not an orthogonal basis.

Using the fact that the cut space is the orthogonal complement of
the cycle space, it is easily seen that an orthogonal basis for
the  cut space   corresponds to the $L_\infty$-normalized system
$\{h_0\} \cup (\cup_{i=1}^n \{ h_{i,j} \colon 1 \le j \le
(2k)^i/2\}$, where $h_0 = \1_{[0,1]}$, and
$$ h_{i,j} 
= (2k)^{-i}(e_{i,2j-1}-e_{i,2j}).$$

Let $P_{n,k}$ denote the orthogonal projection from the edge space of $D_{n,k}$ onto the cut space.
Then $$P_{n,k}(e_{n,1}) =h_0 + \frac{1}{2} \sum_{i=1}^n (2k)^i h_{i,1}.$$
Note that for $1 \le i \le n$,
$$P_{n,k}(e_{n,1})|_{(2(2k)^{-i-1},(2k)^{-i}] }=1+\frac{1}{2}\sum_{j=1}^i(2k)^j
\ge\frac{(2k)^i}{2}.$$
Hence
$$\|P_{n,k}\|_1\ge \|P_{n,k}(e_{n,1})\|_1 \ge\sum_{i=1}^n (1-\frac{1}{k})(2k)^{-i} \frac{(2k)^i}{2} = (1-\frac{1}{k}) \frac{n}{2}. $$
Since $P_{n,k}$ is self-adjoint, it follows that $P_{n.k}$ is a
projection from the edge space $E(D_{n,k})$, equipped with the
$L_\infty$ norm, onto the cut space  $Z(D_{n,k})^\perp $
satisfying $\|P_{n,k}\|_\infty \ge (1- 1/k)n/2$.

 As in the case $k=2$, one can show that if $P$ is any projection onto the cut space (in the $L_\infty$ norm)  then $\|P\|_\infty \ge \|P_{n,k}\|_\infty$ .
By duality, as in the case $k=2$,  it follows that $d_{n,k} \ge (1 - 1/k)n/2$.

To get the upper estimate, note that   $\{h_0\} \cup (\cup_{i=1}^n \{ h_{i,j} \colon 1 \le j \le (2k)^i/2\})$ is a monotone basis
for $LF(D_{n,k})$  in the quotient norm of $LF(D_{n,k})$ and that,
 for each $i$, $(h_{i,j})_{j=1}^{(2k)^i}$
is $2$-equivalent to the unit vector basis of the $\ell_1^N$ space of the same dimension. As in the case $k=2$, this gives $d_{n,k} \le 4n+4$.
\end{proof}

\thanks{ \textbf{Acknowledgements:}
The first named author was supported by the National Science
Foundation under Grant Number DMS--1361461. The first and second
named authors were supported by the Workshop in Analysis and
Probability at Texas A\&M University in 2017. The third named
author was supported by the National Science Foundation under
Grant Number DMS--1700176.}

\end{large}
\begin{small}

\renewcommand{\refname}{\section{References}}

\end{small}

\textsc{Department of Mathematics, University of South Carolina, Columbia, SC 29208, USA.} \par \textit{E-mail address}: \texttt{dilworth@math.sc.edu} \par
\smallskip

\textsc{
 Department of Mathematics University of Illinois at Urbana-Champaign
 Urbana, IL 61801, USA and
Institute of Mathematics and Informatics,  Bulgarian Academy of Sciences,
  Sofia, Bulgaria.} \par
 \textit{E-mail address}: \texttt{denka@math.uiuc.edu}\par
\smallskip

\textsc{Department of Mathematics and Computer Science, St. John's
University, 8000 Utopia Parkway, Queens, NY 11439, USA} \par
  \textit{E-mail address}: \texttt{ostrovsm@stjohns.edu} \par

\end{document}